\titleformat{\section}[block]{\centering\Large\bfseries}{\thesection}{1em}{}
\titleformat{\subsection}[block]{\centering\large\bfseries}{\thesubsection}{1em}{}
\titleformat{\subsubsection}[block]{\centering\normalsize\bfseries}{\thesubsubsection}{1em}{}
\newtheorem{theorem}{\textbf{Theorem}}[section]
\newtheorem{lemma}{\textbf{Lemma}}[section]
\newtheorem{proposition}{\textbf{Proposition}}[section]
\newtheorem{corollary}{\textbf{Corollary}}[section]
\newtheorem{remark}{\textbf{Remark}}[section]
\newtheorem{definition}{\textbf{Definition}}[section]
\newtheorem{example}{\textbf{Example}}[section]
\providecommand{\keywords}[1]{\textbf{\textit{Keywords---}} #1}
\numberwithin{equation}{section}
\def\be{\begin{equation}}
\def\ee{\end{equation}}
\def\bea{\begin{eqnarray}}
\def\eea{\end{eqnarray}}
\def\bt{\begin{theorem}}
\def\et{\end{theorem}}
\def\bl{\begin{lemma}}
\def\el{\end{lemma}}
\def\br{\begin{remark}}
\def\er{\end{remark}}
\def\bp{\begin{proposition}}
\def\ep{\end{proposition}}
\def\bc{\begin{corollary}}
\def\ec{\end{corollary}}
\def\bd{\begin{definition}}
\def\ed{\end{definition}}
\def\Pi{\mathbf{\psi}}
\def\R{{\mathbb R}}
\newcommand{\abs}[1]{\left\lvert #1 \right\rvert}
\newcommand{\norm}[1]{\left\| #1 \right\|}
\title{\bf{On traveling wave solutions of water-wave equations in curved annular domains
}}
\author{
Liang Li$^{a}$
\thanks{liliang187@gxu.edu.cn}
,
Quan Wang$^{b}$
\thanks{Corresponding author:xihujunzi@scu.edu.cn }
\\ \footnotesize $^a$ School of Mathematics and Information Science, 
 Guangxi University, \\ \footnotesize
Nanning, Guangxi, 530004, P.R.China
  \\ \footnotesize $^{b}$ College of Mathematics, Sichuan University,
  \footnotesize
 Chengdu, Sichuan, 610065,  China
}
\date{}
\begin{document}
\maketitle

\begin{abstract}
This paper presents a pioneering investigation into the existence of traveling wave solutions for the two-dimensional Euler equations with constant vorticity in a curved annular domain, where gravity acts radially inward. This configuration is highly relevant to astrophysical and equatorial oceanic flows, such as planetary rings and equatorial currents. Unlike traditional water wave models that assume flat beds and vertical gravity, our study more accurately captures the centripetal effects and boundary-driven vorticity inherent in these complex systems.

Our main results establish both local and global bifurcation of traveling waves, marking a significant advancement in the field. First, through a local bifurcation analysis near a trivial solution, we identify a critical parameter \(\alpha_c\) and prove the existence of a smooth branch of small-amplitude solutions. The bifurcation is shown to be pitchfork-type, with its direction (subcritical or supercritical) determined by the sign of an explicit parameter \(\mathcal{O}\). This finding provides a nuanced understanding of the wave behavior under varying conditions.
Second, we obtain a global bifurcation result using a modified Leray-Schauder degree theory. This result demonstrates that the local branch extends to large-amplitude waves. This comprehensive analysis offers a holistic view of the traveling wave solutions in this complex domain. Finally, numerical examples illustrate the theoretical bifurcation types, confirming both supercritical and subcritical regimes. These examples highlight the practical applicability of our results.

This work represents the first rigorous analysis of traveling waves in curved domains with radial gravity and constant vorticity. By bridging mathematical theory with geophysical and astrophysical applications, our study not only fills a critical gap in the literature but also provides a robust framework for future research in this area.

\end{abstract}

\keywords{Euler equations, curved domain, traveling-wave solutions, local and global bifurcation, modified Leray-Schauder degree.}
\newpage
\tableofcontents

\section{Introduction}
The first comprehensive mathematical model to describe the motion of a fluid was proposed by Euler in the 1750's. The Euler's equations are as follows: 
\begin{align}\label{euler0812}
\begin{cases}
    \frac{\partial \rho}{\partial t}+\nabla\cdot\left(\rho\mathbf{u}\right)=0,~~x\in\Omega\subset R^{n},
    \\
    \frac{\partial \mathbf{u}}{\partial t}+\left(\mathbf{u}\cdot\nabla\right)\mathbf{u}+\frac{\nabla P}{\rho}=\mathbf{F},
\end{cases}
\end{align}
where $\Omega$ represents the fluid domain, with $\eqref{euler0812}_{1}$ and $\eqref{euler0812}_{2}$ expressing conservation of mass and conservation of momentum, respectively. $\mathbf{u}\left(x,t\right)$, $P\left(x,t\right)$ and $\rho\left(x,t\right)$ denote the velocity, pressure and density of the water at the spatial point $x$ and $t$, respectively, while $\rho\mathbf{F}$ accounts for external force like gravity. Remarkably, despite their 18th-century origin, the Euler's equations and their derived formulations continue to play a key role in fluid mechanics and engineering. 

We investigate the classical water wave problem, describing a water body bounded below by a rigid bed and above by a free surface interacting with the atmosphere, subject to gravitational forces. The mathematical study of water waves traces its origins to foundational works by Laplace (1776) and Lagrange (1781), who first derived linearized approximations about the quiescent state. Notably, Lagrange introduced particle following coordinates, now recognized as the Lagrangian formulation. Our introduction focuses on general traveling waves and solitary waves. General traveling waves are permanent-form solutions that propagate at constant velocity while preserving their waveform. Solitary waves constitute a special class of traveling waves characterized by localized, single-peak structures sustained through an exact balance of nonlinear and dispersive effects. Notably, traveling wave solutions may exhibit either periodic (e.g., Stokes waves) or non-periodic behavior; solitary waves are inherently non-periodic with a distinctive unimodal profile.

Traveling-wave solutions are characterized by solutions of the form $\mathbf{u}=\mathbf{u}\left(x-ct,y\right)$ in a 2-dimensional (2D) domain or $\mathbf{u}=\mathbf{u}\left(x-ct,y,z\right)$ in a 3D domain and $P=P\left(x-ct,y\right)$ in 2D or $P=P\left(x-ct,y,z\right)$ in 3D, where $c$ denotes the wave propagation speed. Typically, the density $\rho$ is assumed constant in these formulations. The mathematical study of water waves originated in the early 19th century with Cauchy and Poisson (1815), who employed Fourier expansions to analyze waves linearized about quiescent states. A pivotal advancement came with Airy's derivation of the dispersion relation $c^2k=g\tanh{kd}$ in 1845, where $d$ represents mean depth and $k$ the horizontal wavenumber. This Airy dispersion relation quantifies the fundamental frequency-wavenumber dependence of water waves, distinctly differentiating the dispersive behaviors of periodic waves (e.g., Stokes waves) and solitary waves--laying the groundwork for classifying wave types in subsequent research. For example, one can refer to \cite{Taklo2015, Armaroli2018}.

 The irrotational regime has attracted particular attention due to its mathematical tractability, dominating early research efforts. 
Stokes \cite {Stokes1847} studied irrotational periodic traveling water waves and
some of their nonlinear approximations. The rigorous construction of periodic water waves began with Nekrasov \cite {Nekrasov1921}, who first derived small-amplitude solutions for infinite depth through power series expansion in amplitude a, proving its convergence. Levi-Civita \cite {Levi-Civita1924} later simplified this approach without using Green's functions, while Strui k\cite {Struik1926} extended it to finite depth. These early results were limited to small amplitudes. The first large-amplitude periodic (irrotational) waves were constructed by Krasovskii \cite{Krasovskii1961} using Nekrasov's equation and Krasnoselskii's operator theory. Subsequent advances by Keady-Norbur y\cite{Keady1978} employed global bifurcation theory to obtain smooth solution curves, culminating in Amick-Fraenkel-Toland's \cite{Amick1982} construction of the extreme Stokes wave. More recently, Kogelbauer \cite {Kogelbauer2025} proved the variational instability for irrotational waves in finite-depth environments, expanding the understanding of their instability characteristics. Comprehensive treatments of irrotational wave theory can be found in \cite{Krasovskil1961, Toland1978, McLeod1997, Constantin2009}.

 Water waves with vorticity emerge in a diverse range of physical scenarios. For instance, in regions featuring distinct water density layers—such as the interface between warm surface waters and cold deep waters—internal waves form with embedded vorticity. These waves play a pivotal role in oceanic mixing, nutrient transport, and energy dissipation, thereby exerting significant influences on marine ecosystems and climate modeling frameworks \cite{Brizuela2023, Whitwell2024}. The theoretical exploration of traveling water waves with vorticity has witnessed a surge of interest after the groundbreaking work of Constantin and Strauss  \cite{Constantin2004}. To name a few, Constantin and Strauss \cite{Constantin2011} leverage elliptic theory to prove the existence of steady two-dimensional periodic water waves with large amplitudes in flows characterized by arbitrary bounded yet discontinuous vorticity distributions; Constantin and Escher \cite{Constantin2011a} establish that the profile of a periodic traveling wave propagating at the water surface above a flat bed, within a flow with real-analytic vorticity, must exhibit real-analyticity provided the wave speed exceeds the horizontal fluid velocity throughout the flow; Recent work \cite{Fei2024} by Xu et al. investigated two-dimensional steady stratified periodic gravity water waves, demonstrating that monotonic stratified waves of both large and small amplitude must exhibit crest-line symmetry when no stagnation points exist within the fluid domain. Furthermore, their analysis revealed that even when stagnation points occur away from the free surface, an analogous symmetry result for stratified water waves can be established through application of the moving plane method.  For more research on water waves, the interested readers can refer to \cite{Walsh20141, Constantin2016, Walsh20172, Constantin2019, Kristoffer2020, Constantin2021, Vladimir2023}.

We now turn our mathematical attention to solitary water waves - a specialized subclass of traveling wave solutions characterized by localized, non-periodic profiles that decay to a constant asymptotic height at infinity. Focusing on the fundamental case of two-dimensional irrotational waves in the absence of surface tension and density stratification, we survey the key developments in existence theory. The small-amplitude regime was first rigorously established by Lavrentiev \cite{Lavrent1952} through long-wavelength limits of weakly nonlinear periodic waves, with subsequent simplifications by Friedrichs and Hyers \cite{Friedrichs1954}. Later foundational works employed diverse analytical approaches: Beale \cite{Beale1977} utilized the Nash-Moser implicit function theorem, Mielke \cite{Mielke1988} implemented dynamical systems techniques via center manifold reduction, and Pego and Sun \cite{Pego2016} developed a modern fixed-point framework. The large-amplitude theory presents greater challenges, as standard global bifurcation methods fail to apply directly. Significant breakthroughs were achieved through Amick and Toland's \cite{Amick1981, Amick19811} approximation techniques and the alternative variational approach developed by Chen, Walsh, and their collaborators \cite{Wheeler2013, Wheeler2015, chen2016, chen2018}. These theoretical advances have been extended to more complex physical scenarios, including rotational flows\cite{Haziot2023, Matthies2024} and surface tension effects\cite{Dias2003, Groves2004}, demonstrating the rich mathematical structure underlying solitary wave phenomena.

In this paper, we consider the two-dimensional Euler equations with constant vorticity in a curved domain featuring a free surface (as illustrated by the shaded region in \autoref{quyu0525}).
\begin{align}\label{moxing0523}
\begin{cases}
    \rho\frac{\partial \mathbf{v}}{\partial t}
    +\rho\left(\mathbf{v}\cdot\nabla\right)\mathbf{v}=-\nabla P+\mathbf{g},~a<\sqrt{x^{2}+y^2}< \eta\left(t,\theta\right),
    \\
    \frac{\partial v_{1}}{\partial x}+\frac{\partial v_{2}}{\partial y}=0,~a<\sqrt{x^{2}+y^2}<\eta\left(t,\theta\right),
    \\
    \nabla\times\mathbf{v}=2w_{0},~a<\sqrt{x^{2}+y^2}<\eta\left(t,\theta\right),
\end{cases}
\end{align}
where \(\mathbf{v}=\left(v_{1},v_{2}\right)\) and \(P\) denotes the velocity and pressure field, respectively, \(\rho\) is the density of water.  Furthermore,
\(a>0\) is the radius of the bottom and \(\eta=\eta\left(t,\theta\right)\), representing the free surface, is a function depending on \(t\) and \(\theta\). \(\mathbf{g}=-\left(x,y\right)g/\sqrt{x^2+y^2}\) and the constant \(2\omega_{0}\) are the gravitational acceleration and vorticity, respectively.
\setcounter{equation}{0}
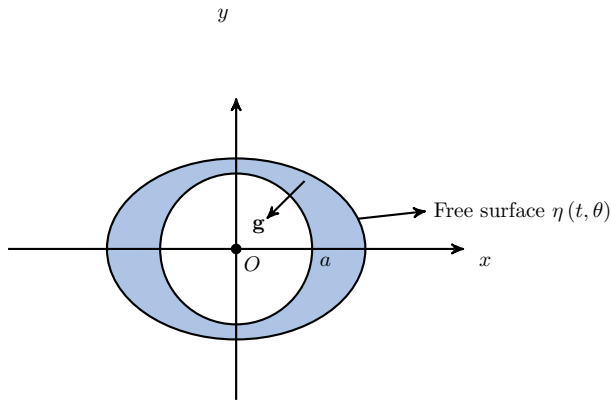
\begin{figure}[tbh]
\centering
        \begin{tikzpicture}[>=stealth',xscale=1,yscale=1,every node/.style={scale=0.8}]
\fill[color=cyan!50!blue!30] (0,0) ellipse (1.7 and 1.2); 
                      \fill[color=white] (0,0) circle (1);
\draw [thick,->] (0,-2) -- (0,2) ;
\draw [thick,->] (-3,0) -- (3,0) ;
\draw [thick,->] (0.9,0.9) -- (0.4,0.4) ;
\node [below right] at (3.1,0) {$x$};
\node [below right] at (1,0) {$a$};
\node at (0.3,0.3) {$\mathbf{g}$};
\node [left] at (0,3.1) {$y$};
 \draw[thick]  (0,0) circle (1); \draw[->] (-3,0) -- (3,0);
\draw[->] (0,-2) -- (0,2);
    \draw[thick,->] (0,0) ellipse (1.7 and 1.2);
    \draw[->,thick] (1.6,0.4) --(2.5,0.5);
    \node [right] at (2.5,0.5) {Free surface $\eta\left(t,\theta\right)$};
 \node[below right] at (0,0) {$O$};
     \fill (0,0) circle (2pt);
\end{tikzpicture}
\caption{$\mathbf{g}=-g\left(\frac{x}{r}, \frac{y}{r}\right)^T,\quad r=\sqrt{x^2+y^2}$}
\label{quyu0525}
\end{figure}

To our knowledge, existing research on water wave phenomena has predominantly focused on configurations with flat bottom topography and vertically oriented gravitational fields ($\mathbf{g} = (0,g)$). However, for realistic modeling of atmospheric and oceanographic flows—particularly in equatorial regions—our curved-domain formulation \eqref{moxing0523} provides a more physically accurate framework. This approach offers two significant improvements over conventional models: (i) the radially inward gravitational acceleration $\mathbf{g} = -\frac{(x,y)g}{r}$ precisely captures centripetal effects in celestial/equatorial flows, e.g., in planetary rings, it reproduces the balance between gravitational pull and rotational inertia, a key physical mechanism ignored by vertical $\mathbf{g}=(0,g)$ models. The work of Goldreich and Tremaine\cite{Goldreich1979} on planetary ring dynamics relies on similar radial gravity assumptions, highlighting the astrophysical relevance of our model; and (ii) the annular geometry naturally represents equatorial fluid motion (e.g., equatorial currents), inducing rotational behaviors like boundary-driven vorticity. In oceanography, our curved-domain, vorticity-aware model enriches equatorial wave studies, complementing Wang et al.\cite{Wang2025} who reconstructed Tropical Instability Waves (TIWs) in the equatorial Pacific. Their work on TIW dynamics, linked to equatorial current shear instability, aligns with our model's focus on annular geometry and vorticity effects, bridging theory and geophysical observations. However, from a mathematical perspective, it remains an open problem in applied mathematics whether the system \eqref{moxing0523} admits any traveling wave solutions.
\section{Reformulations}
Let $\Omega$ be the periodic channel $\mathbb{T} \times (-1, 0)$ or the rectangle $(0, 2\pi) \times (-1, 0)$ with periodicity $2\pi$ in the $q$-direction. We denote by $T$ the top boundary $T = \mathbb{T} \times \{p = 0\}$ of its closure $\overline{\Omega}$, and by $B$ the corresponding bottom boundary $B = \mathbb{T} \times \{p = -1\}$. The spaces $Y$ and $X$ are defined as follows:
\begin{align}\label{spaces}
\begin{aligned}
Y &= Y_1 \times Y_2, \quad 
Y_1 = C_{\text{per}}^{1+s}(\overline{\Omega}), \quad 
Y_2 = C_{\text{per}}^{2+s}(T), \\
X &= \left\{ h(q,p) \in C_{\text{per}}^{3+s}(\overline{\Omega}) \,\middle|\, 
\text{$h = 0$ on $B$, and $h$ is even and periodic in $q$} \right\},
\end{aligned}
\end{align}
where the subscript ``per'' indicates periodicity and evenness in the variable $q$.

\subsection{Equations in polar coordinates}

As the inner boundary is circular, it is naturally to reformulate the governing equations \eqref{moxing0523} for water waves in polar coordinates. Let \(x = r\cos\theta\) and \(y = r\sin\theta\). The transformation is given by
\begin{align}\label{bianhuan0618}
  u_{r} = v_{1}\cos\theta + v_{2}\sin\theta, \quad
  u_{\theta} = v_{2}\cos\theta - v_{1}\sin\theta.
\end{align}
Consequently, we obtain the following system:
\begin{align}\label{main-equations}
\begin{cases}
\frac{\partial u_r}{\partial t}+(u \cdot \nabla) u_r-\frac{u_\theta^2}{r}=-\frac{1}{\rho}\frac{\partial P}{\partial r}-g,\quad a<r<\eta(t,\theta),\\
\frac{\partial u_\theta}{\partial t}+(u \cdot \nabla)u_\theta+\frac{u_r u_\theta}{r}=-\frac{1}{r\rho}\frac{\partial P}{\partial \theta},\quad a<r<\eta(t,\theta),\\
\frac{\partial (r u_r)}{\partial r}
+\frac{\partial u_\theta}{\partial \theta}=0, \quad a<r<\eta(t,\theta),\\
\frac{\partial u_{\theta}}{\partial r}+\frac{u_{\theta}}{r}-\frac{1}{r}
\frac{\partial u_{r}}{\partial \theta}=2\omega_0,\quad a<r<\eta(t,\theta),
\end{cases}
\end{align}
where the operator $(u \cdot \nabla)$ is given by
\begin{align*}
(u \cdot \nabla)=u_r\frac{\partial }{\partial r}+\frac{u_\theta}{ r}\frac{\partial }{\partial \theta}.
 \end{align*}
The boundary conditions for the system \eqref{main-equations} are given as follows:
\begin{align}\label{bd-c}
\begin{cases}
\displaystyle
\frac{\partial \eta}{\partial t} + \frac{u_{\theta}}{r} \frac{\partial \eta}{\partial \theta} = u_{r}, & \quad r = \eta(t,\theta), \\
P = p_{\text{atm}}, & \quad r = \eta(t,\theta), \\
u_{r} = 0, & \quad r = a,
\end{cases}
\end{align}
where the first boundary condition represents the kinematic boundary condition at the free surface, ensuring that fluid particles on the surface remain there. The second condition is the dynamic boundary condition, which states that the pressure $P$ at the free surface equals the constant atmospheric pressure $p_{\text{atm}}$. The final condition imposes that the radial velocity $u_r$ vanishes at the inner boundary $r = a$, indicating that no fluid penetrates the inner wall.

\subsection{Non-dimensionalization}

To non-dimensionalize the problem described by equations \eqref{main-equations}--\eqref{bd-c}, we introduce the following characteristic scales and define the corresponding dimensionless variables:
\begin{align}\label{hai0819}
r = a\tilde{r}, \quad
t = \omega_0^{-1}\tilde{t}, \quad
(\eta, u_r, u_{\theta}, P) = \left(a\tilde{\eta},\ a\omega_0\tilde{u}_r,\ a\omega_0\tilde{u}_{\theta},\ a^2\omega_0^2\rho\tilde{Q}\right),
\end{align}
where $a$ denotes a characteristic length scale, $\omega_0$ is a characteristic frequency. The tilded variables $\tilde{r}$, $\tilde{t}$, $\tilde{\eta}$, $\tilde{u}_r$, $\tilde{u}_{\theta}$, and $\tilde{Q}$ are the corresponding dimensionless quantities.

Substituting these scaled variables into the original system  \eqref{main-equations}--\eqref{bd-c}, we derive the dimensionless form of the governing equations. After simplification and dropping the tildes for notational convenience, we obtain the following non-dimensional system:
\begin{align}\label{main-equations-f}
\begin{cases}
\frac{\partial u_r}{\partial t}+(u \cdot \nabla) u_r-\frac{u_\theta^2}{r}=-\frac{\partial Q}{\partial r}-\alpha,\quad 1<r<\eta(t,\theta),\\
\frac{\partial u_\theta}{\partial t}+(u \cdot \nabla)u_\theta+\frac{u_r u_\theta}{r}=-\frac{1}{r}\frac{\partial Q}{\partial \theta},\quad 1<r<\eta(t,\theta),\\
\frac{\partial (r u_r)}{\partial r}
+\frac{\partial u_\theta}{\partial \theta}=0, \quad 1<r<\eta(t,\theta),\\
\frac{\partial u_{\theta}}{\partial r}+\frac{u_{\theta}}{r}-\frac{1}{r}
\frac{\partial u_{r}}{\partial \theta}=2,\quad 1<r<\eta(t,\theta),
\end{cases}
\end{align}
where \(\alpha=g/(a w_{0}^2)\).
And the corresponding dimensionless boundary conditions are as follows,
\begin{align}\label{bd-cf}
\begin{cases}
\frac{\partial \eta}{\partial t}
+\frac{u_{\theta}}{r}\frac{\partial \eta}{\partial \theta}=u_r,\quad r=\eta(t,\theta),\\
Q=Q_{0},\quad r=\eta(t,\theta),\\
u_r=0,\quad r=1,
\end{cases}
\end{align}
where $Q_{0}$ is a dimenionless parameter given by
\[
Q_{0}=\frac{p_{atm}}{a^2\omega_0^2\rho}.
\]

We aim to look for the traveling wave solutions $\left(u_{r}, u_{\theta}, \eta, Q\right)$ to system \eqref{main-equations-f}--\eqref{bd-cf}. Specifically, we assume solutions of the form:
\begin{align}\label{traveling}
\begin{cases}
\eta(t,\theta) := S(\theta - t), \\
u_r(t,r,\theta) := U(r, \theta - t), \\
u_{\theta}(t,r,\theta) := V(r, \theta - t), \\
Q(t,r,\theta) := \Upsilon(r, \theta - t) + Q_{0},
\end{cases}
\end{align}
where $S$, $U$, $V$, and $\Upsilon$ are functions describing the wave profile in a co-moving frame. This ansatz eliminates the explicit time dependence and implies that the solutions appear steady and periodic in the $\theta$-direction when observed in a reference frame moving with constant unit speed to the right. The constant $Q_0$ represents a background or reference value of the variable $Q$.

Furthermore, we introduce the following change of variables:
\begin{align}\label{haishi0819}
\Theta = \theta - t, \quad R = r.
\end{align}
This transformation defines a coordinate system moving with the wave, where $\Theta$ represents the phase variable in the traveling frame and $R$ remains the radial coordinate. In these new coordinates, the partial derivatives transform as:
\[
\frac{\partial}{\partial t} = -\frac{\partial}{\partial \Theta}, \quad 
\frac{\partial}{\partial \theta} = \frac{\partial}{\partial \Theta}, \quad 
\frac{\partial}{\partial r} = \frac{\partial}{\partial R}.
\]
This change of variables simplifies the analysis by eliminating the explicit time dependence and reducing the problem to studying the spatial structure of the wave in the co-moving frame. Substituting the traveling wave ansatz \eqref{traveling} into the governing system \eqref{main-equations-f}--\eqref{bd-cf} and applying the coordinate transformation \eqref{haishi0819} yields the equations in the traveling wave coordinates:
\begin{align}\label{bianji1113}
\begin{cases}
\left(\frac{V}{R}-1\right)\frac{\partial U}{\partial \Theta}+U\frac{\partial U}{\partial R}-\frac{V^2}{R}=-
\frac{\partial \Upsilon}{\partial R}-\alpha,\quad 1<R<S(\Theta),\\
\left(\frac{V}{R}-1\right)\frac{\partial V}{\partial \Theta}+U\frac{\partial V}{\partial R}+\frac{UV}{R}=-\frac{1}{R}
\frac{\partial\Upsilon}{\partial \Theta},\quad 1<R<S(\Theta),\\
\frac{\partial (R U)}{\partial R}
+\frac{\partial V}{\partial \Theta}=0,\quad 1<R<S(\Theta),\\
\frac{\partial V}{\partial R}+\frac{V}{R}-
\frac{1}{R}\frac{\partial U}{\partial \Theta}=2,\quad 1<R<S(\Theta),
\end{cases}
\end{align}
and the boundary conditions \eqref{bd-cf} transform into:
\begin{align}\label{bd-2}
\begin{cases}
\left(V - R\right)S'(\Theta) = R\,U, & \quad R = S(\Theta), \\
\Upsilon = 0, & \quad R = S(\Theta), \\
U = 0, & \quad R = 1.
\end{cases}
\end{align}
Here, the first condition arises from the kinematic boundary condition at the free surface, the second condition corresponds to the dynamic boundary condition enforcing constant pressure at the surface, and the third condition expresses the no-penetration requirement.

Observe that the velocity field is divergence-free. This allows us to introduce a stream function \(\Psi = \Psi(R, \Theta)\) defined by:
\[
V - R = \frac{\partial \Psi}{\partial R}, \quad U = -\frac{1}{R} \frac{\partial \Psi}{\partial \Theta}.
\]
Using the identity
\[
\frac{\partial V}{\partial R} + \frac{V}{R} - \frac{1}{R} \frac{\partial U}{\partial \Theta} = 2,
\]
we find that \(\Psi\) satisfies the Lapalce equation:
\[
\left( \frac{\partial^2}{\partial R^2} + \frac{1}{R} \frac{\partial}{\partial R} + \frac{1}{R^2} \frac{\partial^2}{\partial \Theta^2} \right) \Psi = 0.
\]
Furthermore, from boundary conditions \(\eqref{bd-2}_1\) and \(\eqref{bd-2}_3\), the stream function \(\Psi\) also satisfies the following free boundary value problem:
\begin{align}\label{equation-stream}
\begin{cases}
\displaystyle
\left( \frac{\partial^2}{\partial R^2} + \frac{1}{R} \frac{\partial}{\partial R} + \frac{1}{R^2} \frac{\partial^2}{\partial \Theta^2} \right) \Psi = 0, & \quad 1 < R < S(\Theta), \\
\displaystyle
\frac{\partial \Psi}{\partial R} S'(\Theta) = -\frac{\partial \Psi}{\partial \Theta}, & \quad R = S(\Theta), \\
\displaystyle
\frac{\partial \Psi}{\partial \Theta} = 0, & \quad R = 1.
\end{cases}
\end{align}

The boundary conditions on the bottom \(R = 1\) and the free surface \(R = S(\Theta)\) imply that \(\Psi\) is constant on both boundaries.  
To verify this on the free surface, consider \(\Psi(R, \Theta)\) evaluated at \(R = S(\Theta)\):  
\[
\frac{d}{d\Theta} \Psi(S(\Theta), \Theta) = \frac{\partial \Psi}{\partial R} S'(\Theta) + \frac{\partial \Psi}{\partial \Theta}.
\]
Substituting the boundary condition \(\eqref{equation-stream}_2\), namely  
\[
\frac{\partial \Psi}{\partial R} S'(\Theta) = -\frac{\partial \Psi}{\partial \Theta},
\]
we obtain  
\[
\frac{d}{d\Theta} \Psi(S(\Theta), \Theta) = 0,
\]
which shows that \(\Psi\) is constant along the free surface. A similar argument applies at the bottom \(R = 1\), where the condition \(\frac{\partial \Psi}{\partial \Theta} = 0\) also implies that \(\Psi\) is constant.

We now define the quantity
\begin{align}\label{p0}
p_0 = \int_{1}^{S(\Theta)} \frac{\partial \Psi}{\partial R}  dR,
\end{align}
which is referred to as the relative mass flux. It follows directly that  
\[
p_0 = \Psi(S(\Theta), \Theta) - \Psi(1, \Theta)
\]  
is a constant, as can be verified using \(\eqref{equation-stream}_2\). Therefore, without loss of generality, we may take  
\[
\Psi = 0 \quad \text{on} \quad R = S(\Theta); \qquad \Psi = -p_0 \quad \text{on} \quad R = 1.
\]  
Introducing the scaling \(\Psi = -p_0 \tilde{\Psi}\) and omitting the tilde yields the following system:
\begin{align}\label{haishixuyao}
\begin{cases}
\displaystyle
\left( \frac{\partial^2}{\partial R^2} + \frac{1}{R} \frac{\partial}{\partial R} + \frac{1}{R^2} \frac{\partial^2}{\partial \Theta^2} \right) \Psi = 0, & \quad 1 < R < S(\Theta), \\
\displaystyle
\Psi = 0, & \quad R = S(\Theta), \\
\displaystyle
\Psi = 1, & \quad R = 1, \\
\displaystyle
V - R = -p_0 \frac{\partial \Psi}{\partial R}, \quad U = \frac{p_0}{R} \frac{\partial \Psi}{\partial \Theta}.
\end{cases}
\end{align}

We define the following quantities:
\begin{align}\label{Ber}
\begin{aligned}
E &:= \frac{\left(V - R\right)^2 + U^2}{2} + \alpha(R - 1) + \Upsilon+\frac{p_{atm}}{a^2\omega_0^2\rho} + 2p_{0}\Psi - \frac{R^2}{2}, \quad 1 < R < S(\Theta), \\
\lambda &:= \frac{\left(V - R\right)^2 + U^2}{2} + \alpha R - \frac{R^2}{2}, \quad R = S(\Theta).
\end{aligned}
\end{align}
From equation \eqref{bianji1113}, we find that the Bernoulli law throughout the flow can be derived.

\begin{lemma}\label{zong0426}
Let \(E\) and \(\lambda\) be defined by \eqref{Ber}. Then
$E$ and $\lambda$ are constants and
the following relations hold:
\begin{align}\label{changshu1110}
\lambda = E -\frac{p_{atm}}{a^2\omega_0^2\rho} + \alpha,~\text{on}~R = S(\Theta).
\end{align}

\end{lemma}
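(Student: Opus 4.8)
The plan is to prove the Bernoulli law \eqref{changshu1110} by direct differentiation: I will show that the gradient of $E$ vanishes identically in the fluid domain, conclude that $E$ is constant there, and then read off the surface identity by restricting $E$ to $R=S(\Theta)$. First I would differentiate $E$ with respect to $R$: from \eqref{Ber},
\[
\frac{\partial E}{\partial R} = (V-R)\Bigl(\frac{\partial V}{\partial R}-1\Bigr) + U\frac{\partial U}{\partial R} + \alpha + \frac{\partial \Upsilon}{\partial R} + 2p_0\frac{\partial \Psi}{\partial R} - R .
\]
I then eliminate $\partial \Upsilon/\partial R$ using the radial momentum equation \(\eqref{bianji1113}_1\) and replace $2p_0\,\partial\Psi/\partial R$ by $-2(V-R)$ via the stream-function relation in \(\eqref{haishixuyao}_4\). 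After regrouping and writing $V^2/R - R = (V-R)(V+R)/R$, every surviving term carries a factor $(V-R)$, and the bracket multiplying it is exactly $\partial_R V + V/R - R^{-1}\partial_\Theta U - 2$, which vanishes by the constant-vorticity constraint \(\eqref{bianji1113}_4\). Hence $\partial E/\partial R \equiv 0$.

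Next I would run the analogous computation for $\partial E/\partial\Theta$: differentiate \eqref{Ber}, substitute $\partial\Upsilon/\partial\Theta$ from the angular momentum equation \(\eqref{bianji1113}_2\), and use $2p_0\,\partial\Psi/\partial\Theta = 2RU$ from \(\eqref{haishixuyao}_4\). The $(V-R)\,\partial_\Theta V$ contributions cancel in pairs, and what remains collapses to $-RU\bigl(\partial_R V + V/R - R^{-1}\partial_\Theta U - 2\bigr)$, which is again zero by \(\eqref{bianji1113}_4\). Since the annular domain $\{1<R<S(\Theta)\}$ is connected, $\nabla E\equiv 0$ forces $E$ to be a constant throughout the flow.

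Finally, to obtain \eqref{changshu1110} I would restrict $E$ to the free surface $R=S(\Theta)$, where the boundary conditions give $\Upsilon=0$ (from \(\eqref{bd-2}_2\)) and $\Psi=0$ (from \(\eqref{haishixuyao}_2\)). Splitting $\alpha(R-1)=\alpha R-\alpha$ in the definition of $E$ and comparing with the definition of $\lambda$ on $R=S(\Theta)$ yields $E = \lambda - \alpha + p_{atm}/(a^2\omega_0^2\rho)$, i.e.\ $\lambda = E + \alpha - p_{atm}/(a^2\omega_0^2\rho)$ on the free surface; since $E$ is a constant, so is $\lambda$, and this is precisely \eqref{changshu1110}.

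I do not expect a genuine obstacle here — the statement is a bookkeeping identity rather than a substantive theorem. The only delicate point is the algebra: one must respect the sign flip introduced by the rescaling $\Psi=-p_0\tilde\Psi$ (so that $p_0\,\partial_R\Psi=-(V-R)$ and $p_0\,\partial_\Theta\Psi=RU$) and pair the nonlinear term $V^2/R-R$ correctly with the linear terms so that the common factor $\partial_R V + V/R - R^{-1}\partial_\Theta U - 2$ emerges in both partial derivatives. Once that pattern is recognized, both derivatives vanish at once by the vorticity equation, and the rest is an immediate restriction to the boundary.
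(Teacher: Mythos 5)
Your proposal is correct and follows essentially the same route as the paper: a direct computation showing $\partial_R E=\partial_\Theta E=0$ using the momentum equations of \eqref{bianji1113}, the stream-function relations $p_0\partial_R\Psi=-(V-R)$, $p_0\partial_\Theta\Psi=RU$ from \(\eqref{haishixuyao}_4\), and the vorticity constraint, followed by restriction to $R=S(\Theta)$ where $\Upsilon=\Psi=0$. The only (immaterial) difference is the order of substitution — you eliminate $\partial\Upsilon$ via the momentum equations first and factor out the vorticity constraint at the end, whereas the paper inserts the vorticity equation mid-computation and identifies the residue with the momentum equations.
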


\begin{remark}
The proof of \autoref{zong0426} can be completed by performing a direct standard calculation using the governing equations \eqref{bianji1113}, conditions \eqref{bd-2} and the definitions of \(E\) and \(\lambda\).
\end{remark}

Hence, using \autoref{zong0426} and system \eqref{haishixuyao}, we derive the following reduced system:
\begin{align}\label{pde-m}
\begin{cases}
\left(\frac{\partial^2}{\partial R^2}+\frac{1}{R}
\frac{\partial }{\partial R}+\frac{1}{R^2}
\frac{\partial^2}{\partial \Theta^2}\right)
\Psi=0,\quad 1<R<S(\Theta),\\
\frac{\abs{p_{0}\nabla  \Psi}^2}{2}+\alpha (R-1)+\Upsilon+2p_{0}\Psi
-\frac{R^2}{2}=E,\quad 1<R<S(\Theta),\\
\abs{p_{0}\nabla  \Psi}^2-R^2+2\alpha R=2\lambda,\quad R=S(\Theta),\\
\Psi=0,\quad R=S(\Theta),\\
\Psi=1,\quad R=1.
\end{cases}
\end{align}
where the gradient operator \(\nabla\) in the \((R, \Theta)\) coordinates is defined by  
\[
\nabla \Psi = \left( \frac{\partial \Psi}{\partial R},\ -\frac{1}{R} \frac{\partial \Psi}{\partial \Theta} \right).
\]
The parameter \(\lambda\) is completely determined by \(\alpha\), \(E\), \(p_{\mathrm{atm}}\), \(a\), \(\omega_0\), and \(\rho\). In this article, we aim to establish the existence of nontrivial solutions to the system \eqref{pde-m} as \(\alpha\) varies.

 \subsection{Equivalent equations}
 
The physical fluid domain for system \eqref{pde-m} is unknown due to the fact that the free surface \(S(\Theta)\) is an undetermined function. It is therefore natural to transform the physical fluid domain into a fixed domain, and subsequently derive the corresponding system defined on this fixed domain. To this end, we focus on the case where \(V\) and \(p_0\) satisfy the following condition:
\[
(V - R) < 0 \quad \text{and} \quad p_0 < 0.
\]  
This condition guarantees that \(\Psi(R, \Theta)\) is strictly decreasing with respect to \(R\). We therefore introduce the following transformation:
\begin{align}\label{bianhuan1113}
q = \Theta, \quad p = -\Psi(R, \Theta).
\end{align}  

Due to the periodicity of the waves, it suffices to consider this transformation over one wavelength. Using \(\eqref{pde-m}_4\) and \(\eqref{pde-m}_5\), the transformed domain \(\Omega\) is found to be the periodic channel
\[
\Omega = \mathbb{T} \times (-1, 0).
\]  

Let \(h(q, p) = R - 1\) denote the height above the bottom. Applying the transformation \eqref{bianhuan1113} to \(h(q, p)\) and using the chain rule, we obtain:  
\begin{align}\label{suoyi1113}
\begin{aligned}
1 &= \frac{dh}{dR} = \frac{\partial h}{\partial p} \frac{\partial p}{\partial \Psi} \frac{\partial \Psi}{\partial R} = \frac{\partial h}{\partial p} \frac{V - R}{p_0}, \\
0 &= \frac{dh}{d\Theta} = \frac{\partial h}{\partial q} \frac{\partial q}{\partial \Theta} + \frac{\partial h}{\partial p} \frac{\partial p}{\partial \Psi} \frac{\partial \Psi}{\partial \Theta} = \frac{\partial h}{\partial q} - \frac{\partial h}{\partial p} \frac{R U}{p_0}.
\end{aligned}
\end{align}  
From these and \(\eqref{haishixuyao}_4\), it follows that  
\begin{align}\label{suoyi11131}
h_q := \frac{\partial h}{\partial q} = \frac{R U}{V - R}, \quad
h_p := \frac{\partial h}{\partial p} = \frac{p_0}{V - R}.
\end{align}  
Combining \(\eqref{haishixuyao}_4\) with \eqref{suoyi11131}, we derive  
\begin{align}\label{gaiyixia1113}
\frac{\partial \Psi}{\partial R} = -\frac{1}{h_p}, \quad
\frac{1}{R} \frac{\partial \Psi}{\partial \Theta} = \frac{1}{h + 1} \frac{h_q}{h_p}.
\end{align}

Similarly, applying the chain rule to the identity \(h(\Theta, -\Psi(R, \Theta)) = R - 1\), we further derive:
\begin{align}\label{zaituidaoyici1113}
\frac{\partial^2 \Psi}{\partial R^2} = \frac{h_{pp}}{(h_p)^3}, \quad
\frac{1}{R^2} \frac{\partial^2 \Psi}{\partial \Theta^2} = \frac{1}{(h+1)^2} \cdot \frac{h_{qq}(h_p)^2 - 2h_q h_p h_{qp} + h_{pp}(h_q)^2}{(h_p)^3}.
\end{align}
Substituting \eqref{gaiyixia1113}--\eqref{zaituidaoyici1113} into \eqref{pde-m}, we find that \(h(q, p)\) satisfies the following nonlinear system:
\begin{align}\label{new-equation}
\begin{cases}
\begin{aligned}
&h_{qq}(h_p)^2 - 2h_q h_p h_{qp} + h_{pp}(h_q)^2 -(h+1)(h_p)^2\\& +  (h+1)^2 h_{pp} = 0, \quad -1 < p < 0, 
\end{aligned}
\\
\begin{aligned}
&(h+1)^2 (h_p)^2 \left[ 2\lambda + (h+1)^2 - 2\alpha (h+1) \right]  \\&- p_0^2 (h_q)^2 -p_0^2 (h+1)^2 = 0, \quad p = 0,
\end{aligned}
 \\
\displaystyle
h = 0, ~ p = -1.
\end{cases}
\end{align}

\begin{lemma}\label{dengjiaxingzhengming0618} 
Under the conditions \(V - R < 0\) and \(p_0 < 0\), the existence of a classical solution to the problem \eqref{main-equations}--\eqref{bd-c} is equivalent to the existence of a solution to the problem \eqref{new-equation}.
\end{lemma}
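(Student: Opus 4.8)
\textit{Proof proposal.}

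The plan is to realize the asserted equivalence as a composition of explicit, mutually invertible changes of unknowns, and then to check that each link in the chain is reversible while keeping track of regularity and of the two sign conditions. Reading Section~2 backwards, a classical traveling-wave solution of \eqref{main-equations}--\eqref{bd-c} corresponds, via the scaling \eqref{hai0819}, the ansatz \eqref{traveling}, and the coordinate change \eqref{haishi0819} (all invertible substitutions), to a solution $(U,V,S,\Upsilon)$ of \eqref{bianji1113}--\eqref{bd-2}, and the hodograph map \eqref{bianhuan1113} is what carries \eqref{pde-m} to \eqref{new-equation}. It therefore suffices to prove that, under the standing hypotheses $V-R<0$ and $p_0<0$, the system \eqref{bianji1113}--\eqref{bd-2} is equivalent to \eqref{new-equation}, and I would do this by establishing the two implications separately.

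\emph{From \eqref{bianji1113}--\eqref{bd-2} to \eqref{new-equation}.} Given a solution of \eqref{bianji1113}--\eqref{bd-2}, the one-form $(V-R)\,dR-RU\,d\Theta$ is closed precisely because of the incompressibility equation \(\eqref{bianji1113}_3\), and its period around a circle $R=\mathrm{const}$ equals $-\int_0^{2\pi}RU\,d\Theta$, which is independent of $R$ (again by \(\eqref{bianji1113}_3\) and periodicity) and vanishes at $R=1$ by the no-penetration condition \(\eqref{bd-2}_3\); hence a single-valued, $2\pi$-periodic stream function $\Psi$ with $V-R=\partial_R\Psi$, $U=-R^{-1}\partial_\Theta\Psi$ exists. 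The vorticity equation \(\eqref{bianji1113}_4\) then says exactly that $\Psi$ is harmonic for the polar Laplacian and, as shown in the text, $\Psi$ is constant on $R=1$ and on $R=S(\Theta)$; normalizing by the relative mass flux $p_0$ (the scaling $\Psi=-p_0\tilde\Psi$) gives $\Psi=0$ on the surface and $\Psi=1$ on the bottom. By \autoref{zong0426}, the momentum equations \(\eqref{bianji1113}_{1,2}\) are equivalent, given the vorticity equation, to $E$ and $\lambda$ being constant; together with the boundary conditions this is precisely \eqref{pde-m}. Finally, since $\partial_R\Psi=-(V-R)/p_0<0$ under the sign hypotheses, the map $p=-\Psi$ is strictly increasing in $R$, so \eqref{bianhuan1113} is a genuine diffeomorphism of $\{1<R<S(\Theta)\}$ onto $\Omega=\mathbb{T}\times(-1,0)$; the chain-rule identities \eqref{suoyi1113}--\eqref{zaituidaoyici1113} then show that $h:=R-1$ solves \eqref{new-equation}, with $h+1=R>1>0$ and $h_p=p_0/(V-R)>0$.

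\emph{From \eqref{new-equation} to \eqref{bianji1113}--\eqref{bd-2}.} Given $h$ solving \eqref{new-equation} in the relevant function class with $h>-1$ and $h_p>0$ --- the positivity conditions that mirror $V-R<0$, $p_0<0$ --- the condition $h_p>0$ means $p\mapsto R=1+h(q,p)$ is strictly increasing, so \eqref{bianhuan1113} is invertible: set $S(\Theta):=1+h(\Theta,0)$, solve for $p$ as a function of $(R,\Theta)$ on $\{1<R<S(\Theta)\}$, and put $\Psi:=-p$. Reversing \eqref{gaiyixia1113}--\eqref{zaituidaoyici1113} shows $\Psi$ is harmonic, equals $0$ on $R=S(\Theta)$, equals $1$ on $R=1$, and satisfies the surface Bernoulli relation \(\eqref{pde-m}_3\). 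Define $V:=R-p_0\partial_R\Psi$ and $U:=R^{-1}p_0\partial_\Theta\Psi$ (so $V-R=p_0/h_p<0$), recover $\Upsilon$ from the algebraic relation \(\eqref{pde-m}_2\), set $Q=\Upsilon+Q_0$, and undo \eqref{hai0819} to get $P$. One then verifies \eqref{bianji1113}--\eqref{bd-2}: the incompressibility and constant-vorticity equations follow from harmonicity of $\Psi$ and the definitions of $U,V$; the momentum equations follow from the converse half of \autoref{zong0426}, which is equally elementary since, modulo the vorticity equation, they are exactly the gradient of the relation defining $E$; $\Upsilon=0$ on $R=S$ follows from $\Psi=0$ there; $U=0$ on $R=1$ follows from $\Psi\equiv1$ there; and the kinematic condition \(\eqref{bd-2}_1\) follows by differentiating $\Psi(S(\Theta),\Theta)=0$ along the surface. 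Undoing \eqref{haishi0819}, \eqref{traveling}, \eqref{hai0819} produces a classical solution of \eqref{main-equations}--\eqref{bd-c}.

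\emph{Main obstacle.} The bulk of the work --- the identities \eqref{suoyi1113}--\eqref{zaituidaoyici1113} and the verification that they intertwine \eqref{pde-m} and \eqref{new-equation} --- is routine. The genuinely substantive points are: (a) the single-valuedness and $2\pi$-periodicity of the stream function on the topologically annular fluid domain, which is where one actually uses the no-penetration condition together with incompressibility rather than pure algebra; (b) the converse direction of the Bernoulli step, where one must be sure that passing to \eqref{pde-m} loses no information from the momentum system; and (c) matching regularity, so that a solution in the H\"older classes $X,Y$ of \eqref{spaces} on one side yields a solution of the same regularity on the other --- here the point is that the hodograph map and its inverse are exactly as smooth as $h$ (respectively $\Psi$), so no derivatives are lost, but this must be checked against the definition of the solution spaces. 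I expect (a), together with pinning down the exact correspondence of the sign/positivity conditions ($V-R<0,\ p_0<0\iff h>-1,\ h_p>0$), to be the crux; the remainder is computation or a direct appeal to \autoref{zong0426}.
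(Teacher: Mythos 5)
Your proposal is correct, and its overall architecture (reduce everything to the equivalence of \eqref{bianji1113}--\eqref{bd-2} with \eqref{new-equation}, invoke \autoref{zong0426} for the Bernoulli step) matches the paper; the substantive difference is in how the converse direction builds the stream function. You invert the semi-hodograph map directly: since $h_p>0$, for each fixed $q=\Theta$ the map $p\mapsto R=1+h(q,p)$ is a strictly increasing bijection from $[-1,0]$ onto $[1,S(\Theta)]$, you set $\Psi=-p(R,\Theta)$, and the identity $h(\Theta,-\Psi(R,\Theta))=R-1$, the boundary values $\Psi=0$ on $R=S(\Theta)$ and $\Psi=1$ on $R=1$, and the reversed chain-rule formulas \eqref{gaiyixia1113}--\eqref{zaituidaoyici1113} all follow at once by implicit differentiation. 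The paper instead sets $F=1/h_p$, $G=-h_q/h_p$, integrates for each fixed $\Theta$ the ODE $\Psi_R=-F(\Theta,-\Psi)$ inward from the free surface, proves $\Psi_\Theta=-G(\Theta,-\Psi)$ by an ODE uniqueness argument resting on the compatibility identity $F_q+F_pG-G_pF=0$ (i.e.\ $h_{qp}=h_{pq}$), and then shows by a Newton--Leibniz computation that the level set $\{\Psi=1\}$ is exactly $\{R=1\}$. The two constructions yield the same $\Psi$: yours is shorter because global monotonicity in $p$ delivers in one stroke what the paper verifies piece by piece, while the paper's route makes explicit where the cross-derivative compatibility of $h$ enters. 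Your point (a) — single-valuedness of the unnormalized stream function on the annulus, via closedness of $(V-R)\,dR-RU\,d\Theta$ from incompressibility together with vanishing of the period by the no-penetration condition at $R=1$ — is left implicit in the paper's forward direction and is worth recording. One small imprecision: $\Upsilon=0$ on $R=S(\Theta)$ does not follow from $\Psi=0$ there alone; it requires combining the interior Bernoulli relation $\eqref{pde-m}_2$ with the surface relation $\eqref{pde-m}_3$ and the normalization of the constants $E$ and $\lambda$ in \eqref{changshu1110}. That is bookkeeping, not a gap.
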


\begin{remark}
The proof of \autoref{dengjiaxingzhengming0618}  is provided in \autoref{dengjiaxingzhengming0618-1}.
\end{remark}

 \subsection{Statement of main results}

For some \(\gamma\) satisfying \(0 < \gamma < 1\) , suppoe that \(p_0\)
satisfies 
\[
e^{4\gamma} \gamma^2 > p_0^2.
\]
We then define \(\alpha_s\) as follows:
\begin{align}
\alpha_s :=
 \frac{1}{2} \left( e^{\gamma} - \frac{p_0^2}{e^{3\gamma} \gamma^2} \right).
\end{align}
We then define the nonlinear operator  
\begin{align}\label{nonlinear-1}
\mathcal{G}(\alpha, h) = \left( \mathcal{G}_1(\alpha, h), \mathcal{G}_2(\alpha, h) \right) : (\alpha_s, +\infty) \times X \to Y = Y_1 \times Y_2,
\end{align}  
where the components are given by  
\begin{align}
&\begin{aligned}\label{nonlinear-e-1-1}
\mathcal{G}_1(\alpha, h) = {} & h_{qq} (h_p)^2 
 - 2h_q h_p h_{qp} + h_{pp}(h_q)^2 \\
&- (h+1)(h_p)^2 + (h+1)^2 h_{pp}
\end{aligned} \\
&\begin{aligned}\label{nonlinear-e-1-2}
\mathcal{G}_2(\alpha, h) = {} & (h+1)^2 (h_p)^2 \left( 2\lambda(\alpha, \gamma) + (h+1)^2 - 2\alpha (h+1) \right) \\
& - p_0^2 (h_q)^2 - p_0^2 (h+1)^2\big|_{p=0},
\end{aligned}
\end{align}
where $\lambda(\alpha, \gamma) $ is given by
\[
\lambda(\alpha, \gamma) := \frac{p_0^2 + e^{3\gamma} \gamma^2 \left(2\alpha - e^{\gamma}\right)}{2\gamma^2 e^{2\gamma}}>0\quad
\text{as}\quad e^{4\gamma} \gamma^2 > p_0^2.
\] 

 With the help of the nonlnear operator $\mathcal{G}(\alpha, h)$, the system
  \eqref{new-equation} is rewitten as 
  the following abstract operator equation defined on $(\alpha_s, +\infty) \times X$
  \begin{align}\label{operator-equation}
  \mathcal{G}(\alpha, h)=0.
  \end{align}
The main theorems of the paper are the following two existence results
on the system \eqref{operator-equation}.
\begin{theorem}\label{dingli08201} 
For some \(\gamma\) satisfying \(0 < \gamma < 1\), the following conclusions hold:
\begin{description}
\item[ (1)] The system \eqref{operator-equation} has a solution $H(p)= e^{\gamma(p + 1)} - 1$ for all $\alpha >\alpha_s$. It also admits a bifurcation point \((\alpha_c, H)\), where \(\alpha_c\) denotes the critical value of the control parameter $\alpha$, defined by
\[
\alpha_c = e^{\gamma} + \frac{2p_0^2}{\gamma^2 e^{\gamma}(e^{2\gamma} - 1)}.
\]
\item[ (2)]There exists a continuously differentiable nontrivial solution curve passing through the bifurcation point \((\alpha_c, H)\) and satisfying \(\mathcal{G}(\Lambda(s), x(s)) = \mathbf{0}\), parameterized as  
\[
\mathcal{C}_ {loc}:=\left\{ \left( \Lambda(s), x(s) \right) \middle| s \in (-\delta, \delta),\ \left( \Lambda(0), x(0) \right) = (\alpha_c, H) \right\}.
\] 
\item[ (3)] The bifurcation types of system \eqref{operator-equation} at $\left(\alpha_{c},H\right)$ is not transcritical, but determined by the sign of a parameter $\mathcal{O}$ given by
\[
\begin{aligned}
\mathcal{O}&=
\frac{1}{36 \gamma ^2 \hat{C}^2 p_{0}^2 \left(e^{2 \gamma }-1\right)^2 e^{9 \gamma}}\bigg{[}
60 p_{0}^4+230 p_{0}^4 e^{2 \gamma }+e^{4 \gamma } \left(962  p_{0}^4-79 \gamma^2  p_{0}^2+1152\gamma p_{0}^4\right)
\\
&\quad+e^{6 \gamma } \left(576 \gamma  p_{0}^4-3814 p_{0}^4+144 \gamma ^3 p_{0}^2+648 \gamma ^2 p_{0}^2\right)+e^{8 \gamma } (27 \gamma ^4+576 \gamma p_{0}^4\\
&\quad+1122 p_{0}^4-72 \gamma ^3 p_{0}^2
-1578 \gamma ^2 p_{0}^2)+e^{10 \gamma } \left(1440 p_{0}^4-72 \gamma ^4+988 \gamma ^2 p_{0}^2\right)\\
&\quad+e^{14 \gamma } \left(54 \gamma ^4-324 \gamma ^2 p_{0}^2\right)
+e^{12 \gamma } \left(36 \gamma ^4-72 \gamma ^3 p_{0}^2+345 \gamma ^2 p_{0}^2\right)
\\
&\quad-63 \gamma ^4 e^{16 \gamma }+18 \gamma ^4 e^{18 \gamma }
\bigg{]}.
\end{aligned}
\]
The bifurcation types is either subcritical if $\mathcal{O}<0$ or supercritical if $\mathcal{O}>0$,
 where
 \[
\hat{C}=\sqrt{\pi\left[\frac{1}{4\gamma}+\frac{3+4\gamma}{4\gamma e^{4\gamma}}-\frac{1}{\gamma e^{2\gamma}}\right]}.
 \]
\end{description}
\end{theorem}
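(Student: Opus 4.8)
My plan is to verify claim (1) by direct substitution and then treat claims (2)--(3) via the Crandall–Rabinowitz local bifurcation theorem applied to $\mathcal{G}$ at $(\alpha_c,H)$. First I would check that $h\equiv H(p)=e^{\gamma(p+1)}-1$ solves $\mathcal{G}(\alpha,h)=0$ for every $\alpha>\alpha_s$: since $H$ depends only on $p$, one has $H_q=H_{qq}=H_{qp}=0$, so $\mathcal{G}_1$ reduces to $-(H+1)H_p^2+(H+1)^2H_{pp}$, and with $H+1=e^{\gamma(p+1)}$, $H_p=\gamma e^{\gamma(p+1)}$, $H_{pp}=\gamma^2 e^{\gamma(p+1)}$ this vanishes identically. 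For $\mathcal{G}_2$ at $p=0$ one substitutes $H+1=e^\gamma$, $H_p=\gamma e^\gamma$ and the explicit $\lambda(\alpha,\gamma)$; the definition of $\lambda(\alpha,\gamma)$ is precisely engineered so that $(h+1)^2(h_p)^2(2\lambda+(h+1)^2-2\alpha(h+1))=p_0^2(h+1)^2$, which is the required identity (here $h_q=0$). This also explains why $\lambda$ was chosen in that form. Thus $H$ is a trivial branch for all $\alpha>\alpha_s$, and $\alpha_c>\alpha_s$ under the standing hypothesis on $p_0$, so $(\alpha_c,H)$ lies in the domain.

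Next, for the bifurcation I would compute the Fréchet linearization $\mathcal{G}_h(\alpha,H)$ at the trivial solution. Writing $h=H+w$ and discarding quadratic terms in $w$, the linearized interior operator becomes a constant-coefficient (in the $w$-derivatives, with $p$-dependent coefficients coming from $H$) second-order elliptic operator $L_\alpha w$ on $\Omega=\mathbb T\times(-1,0)$, with a Dirichlet condition $w=0$ on $p=-1$ and an oblique/Robin-type condition on $p=0$ obtained by linearizing $\mathcal{G}_2$. Expanding $w$ in Fourier modes $w(q,p)=\sum_k w_k(p)\cos(kq)$ (evenness in $q$ is built into $X$), each mode solves a linear ODE in $p$; the $k=0$ mode is nondegenerate (it just reparametrizes the trivial family), while the kernel condition forces a nonzero $w_k$ for some $k\ge1$, and solving the ODE plus the two boundary conditions produces a dispersion relation $F(\alpha,k)=0$. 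I expect $\alpha_c$ to be exactly the value at which the $k=1$ mode first enters the kernel, giving $\dim\ker\mathcal{G}_h(\alpha_c,H)=1$ with explicit generator $w^*(q,p)=\varphi(p)\cos q$ for an explicit $\varphi$ (a combination of $R^{\pm1}$-type solutions pulled back through the $p$–$R$ change of variables), and $\hat C$ is the $L^2(\Omega)$-normalizing constant of $w^*$, consistent with the formula given for $\hat C$. I would then check the remaining Crandall–Rabinowitz hypotheses: codimension one of the range (a solvability/Fredholm-index argument for the oblique-derivative elliptic problem, exactly as in the Constantin–Strauss framework) and the transversality condition $\mathcal{G}_{\alpha h}(\alpha_c,H)w^*\notin\operatorname{Range}\mathcal{G}_h(\alpha_c,H)$, which reduces to showing $\partial_\alpha F(\alpha_c,1)\ne0$. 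This yields the $C^1$ curve $\mathcal{C}_{loc}$ of claim (2).

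For claim (3), the direction of bifurcation is governed by the standard Crandall–Rabinowitz second-order formula: the branch is transcritical iff $\langle \ell,\mathcal{G}_{hh}(\alpha_c,H)[w^*,w^*]\rangle\ne0$, and when this quantity vanishes the bifurcation is pitchfork-type with sub/supercritical direction determined by the sign of a ratio involving $\langle\ell,\mathcal{G}_{hhh}[w^*,w^*,w^*]\rangle$, $\langle\ell,\mathcal{G}_{hh}[w^*,w_2]\rangle$ (where $w_2$ solves the second-order corrector equation $\mathcal{G}_h w_2=-\tfrac12\mathcal{G}_{hh}[w^*,w^*]$), and the transversality pairing; here $\ell$ spans the one-dimensional cokernel. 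I would first show the quadratic pairing vanishes — this is where the evenness/parity in $q$ and the $\cos q$ structure help: $\mathcal{G}_{hh}[w^*,w^*]$ has only $k=0$ and $k=2$ Fourier content, and a parity/orthogonality argument should kill the pairing with $\ell$ (which lives in the $k=1$ sector), giving the "not transcritical'' assertion. Then $\mathcal{O}$ is, up to the positive normalizing factor $1/(36\gamma^2\hat C^2 p_0^2(e^{2\gamma}-1)^2 e^{9\gamma})$, exactly the numerator of that cubic Crandall–Rabinowitz coefficient; computing it is a long but mechanical symbolic calculation (solve the $k=0$ and $k=2$ corrector ODEs explicitly, substitute into the cubic form, integrate against $\ell$), and matching it to the stated polynomial in $e^{2\gamma},\gamma,p_0$ is the bulk of the work.

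**Main obstacle.** The conceptual heart is setting up the linearized oblique-derivative problem correctly and extracting the dispersion relation/kernel generator in closed form; the genuinely hard labor is the third-order computation of $\mathcal{O}$ — in particular solving the second-order corrector $w_2$ explicitly in both the $k=0$ and $k=2$ channels and carrying the resulting rational-exponential expressions through the cubic functional without error, since the final polynomial has many terms and any slip propagates. Establishing that the cokernel is one-dimensional for this non-standard (curved, radial-gravity) oblique boundary condition, and that the transversality pairing is nonzero, is the remaining technical point, but it follows the now-standard elliptic-regularity template of Constantin–Strauss-type arguments.
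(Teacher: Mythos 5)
Your proposal is correct and follows essentially the same route as the paper: direct substitution for the trivial branch (with $\lambda(\alpha,\gamma)$ engineered to make $\mathcal{G}_2$ vanish), Crandall--Rabinowitz after a Fourier-cosine decomposition that isolates the $k=1$ mode as the one-dimensional kernel at $\alpha_c$, a Fredholm/elliptic argument for the codimension-one range and transversality, a $q$-parity argument showing the quadratic pairing vanishes (so the bifurcation is a pitchfork), and the explicit third-order Kielh\"ofer coefficient computation for $\mathcal{O}$. The only cosmetic difference is your guess that the kernel generator comes from $R^{\pm 1}$-type harmonics, whereas the explicit solution is $\phi_1(p)=e^{2\gamma p}-e^{-2\gamma}$ (constants plus an $R^2$-type term in the original variable); this does not affect the argument.
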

\begin{remark}\label{chongxinlaiyoujihui0820}
The proof is presented in \autoref{433}, with its key steps outlined as follows:
\begin{itemize}
    \item The existence of \(H(p)=e^{\gamma(p + 1)} - 1\), the modified eigenvalue problem, and the critical control parameter \(\alpha_c\) are established in Theorem \ref{pingfanjie0820};
    \item  Define  \(\mathcal{F}(\alpha, f) := \mathcal{G}(\alpha, H + f)\).
    We then examine that the linear operator \(\mathcal{F}_{f}(\alpha_c, 0)\) satisfies the conditions of the Crandall–Rabinowitz Theorem (\autoref{the21}). Specifically, we verify whether both the dimension of the null space and the codimension of the image space equal one, as established in \autoref{yige0620}-\autoref{range0507}, respectively;
    \item To determine bifurcation types, we
    provide the computation of $\mathcal{O}$ in  \autoref{433}.
\end{itemize}

\end{remark}
 \autoref{dingli08201}  gives the local bifurcation conclusions about the systems \eqref{main-equations}-\eqref{bd-c} as follows.
\begin{corollary}[Small-amplitude waves] Consider traveling solutions \eqref{traveling} of speed
$-1$ and relative mass flux $p_0<0$ of the dimensionless water wave problem \eqref{main-equations}-\eqref{bd-c} with constant vorticity $2\omega_{0}$
 such that $u_{\theta}<ar\omega_{0}$ throughout the fluid. Under the assumptions of the conditions in \autoref{dingli08201}, there exists a connected set of solutions to the systems \eqref{main-equations}-\eqref{bd-c} bifurcated from $(a_{c},(u_{r0},u_{\theta 0},P_{0},\eta_{0}))$, where $a_{c}$ is a constant and $(u_{r},u_{\theta},P,\eta)=(u_{r0},u_{\theta 0},P_{0},\eta_{0})$ is a solution of \eqref{main-equations}-\eqref{bd-c}
 corresponding to the solution $H$ to the system \eqref{operator-equation}.
\end{corollary}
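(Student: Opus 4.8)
I would derive the corollary by transporting the local branch $\mathcal{C}_{loc}$ of \autoref{dingli08201} backwards through the chain of reformulations of Section~2, the central link being the equivalence in \autoref{dengjiaxingzhengming0618}. Recall that the operator equation \eqref{operator-equation} is precisely \eqref{new-equation} with the free Bernoulli constant pinned at $\lambda=\lambda(\alpha,\gamma)$; that \eqref{new-equation} comes from \eqref{pde-m} via the hodograph change of variables \eqref{bianhuan1113}; and that \eqref{pde-m} is in turn equivalent — through the stream function, \autoref{zong0426}, the traveling-wave ansatz \eqref{traveling}, the moving coordinates \eqref{haishi0819}, and the non-dimensionalization \eqref{hai0819} — to the original free-boundary problem \eqref{main-equations}--\eqref{bd-c}. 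Hence, once the standing assumptions $V-R<0$ and $p_0<0$ of \autoref{dengjiaxingzhengming0618} are in force along the branch, each point of $\mathcal{C}_{loc}$ reconstructs a classical solution of \eqref{main-equations}--\eqref{bd-c}, and the desired connected set and its base point are inherited from $\mathcal{C}_{loc}$ and $(\alpha_c,H)$.

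Concretely I would proceed in four steps. (i) \emph{Translate the hypotheses}: under the scalings \eqref{hai0819}, the stagnation-free assumption $u_\theta<ar\omega_0$ together with $p_0<0$ is exactly the pair $V-R<0$, $p_0<0$ needed in \autoref{dengjiaxingzhengming0618}. (ii) \emph{Persistence along the branch}: at $s=0$ one has $h=H=e^{\gamma(p+1)}-1$, so $h_p=\gamma e^{\gamma(p+1)}>0$ on the compact set $\overline{\Omega}$, whence $V-R=p_0/h_p<0$ strictly and, using $h_p>0$ with $h=0$ on $p=-1$, also $h>0$ for $-1<p\le 0$; since $s\mapsto x(s)$ is continuous into $X\subset C^{3+s}_{\mathrm{per}}(\overline{\Omega})$, after shrinking $\delta$ these strict inequalities — and the monotonicity $h_p>0$ that makes \eqref{bianhuan1113} invertible — hold on all of $\mathcal{C}_{loc}$. (iii) \emph{Reconstruct the physical fields}: for each $s$ set $R=h+1$, $V=R+p_0/h_p$ and $U=p_0 h_q/\big((h+1)h_p\big)$ from \eqref{suoyi11131}, recover $\Upsilon$ algebraically from the Bernoulli relation \(\eqref{pde-m}_2\), put $Q=\Upsilon+Q_0$ and $P=a^2\omega_0^2\rho\,Q$, let $S(\Theta)=1+h(\Theta,0)$ be the free surface, and undo \eqref{haishi0819}, \eqref{traveling}, \eqref{hai0819} to obtain $(u_r,u_\theta,P,\eta)$; the base solution $(u_{r0},u_{\theta0},P_0,\eta_0)$ is the image of $h\equiv H$ — a laminar flow with purely azimuthal velocity and circular free surface on the annulus $a_c<r<a_c e^{\gamma}$ — and $a_c=g/(\alpha_c\omega_0^2)$, reading $\alpha=g/(a\omega_0^2)$ as a monotone relabeling of the inner radius $a$ (with $g,\omega_0$ fixed). (iv) \emph{Conclude}: the reconstruction map $(\alpha,h)\mapsto(u_r,u_\theta,P,\eta)$ is continuous from $(\alpha_s,+\infty)\times X$ into the relevant Hölder spaces, so the image of the connected set $\mathcal{C}_{loc}$ (a continuous image of the interval $(-\delta,\delta)$) is a connected set of classical solutions of \eqref{main-equations}--\eqref{bd-c} passing through $(a_c,(u_{r0},u_{\theta0},P_0,\eta_0))$.

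The only genuinely substantive point is step~(ii): one must check that $V-R<0$, $h_p>0$, and $h>0$ in the interior survive on the entire local branch. This is a routine openness argument built on the $C^{3+s}$-continuity of $s\mapsto x(s)$ and compactness of $\overline{\Omega}$, but it is the place where the a priori sign conditions underpinning the hodograph equivalence could in principle be lost, so it deserves explicit verification. The remaining bookkeeping — that $h\in C^{3+s}_{\mathrm{per}}(\overline{\Omega})$ feeds through the formulas of step~(iii) to give velocity and pressure of sufficient regularity for a classical solution, and that the constants $E$, $\lambda$, $Q_0$ (hence $p_{\mathrm{atm}}$) are merely the compatible values forced by the correspondence, the choice $\lambda=\lambda(\alpha,\gamma)$ included — is direct substitution and presents no real obstacle.
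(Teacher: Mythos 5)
Your proposal is correct and follows the route the paper intends: the corollary is obtained by combining the local branch $\mathcal{C}_{loc}$ of \autoref{dingli08201} with the equivalence of \autoref{dengjiaxingzhengming0618}, reconstructing $(u_r,u_\theta,P,\eta)$ from each $h$ on the branch exactly as in the proof of that lemma. In fact you supply more detail than the paper does (which leaves the proof implicit), and your step (ii) — the persistence of $h_p>0$ and $V-R<0$ along the branch by $C^{3+s}$-continuity of $s\mapsto x(s)$ — correctly isolates the one point that genuinely needs checking for the hodograph equivalence to apply off the trivial solution.
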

\begin{remark}
The absence of detailed information on the local bifurcation structure of the systems \eqref{main-equations}--\eqref{bd-c} stems from the implicit relationship between \(h\) and \(\Psi\), which complicates the explicit characterization of bifurcation behavior.
\end{remark}

\autoref{dingli08201} establishes the local bifurcation of system \eqref{operator-equation} at \((\alpha_c, H)\). The bifurcated solution near \((\alpha_c, H)\) corresponds to small-amplitude traveling waves. To demonstrate the existence of large-amplitude traveling waves, we must analyze the solution component containing the point \((\alpha_c, H(p))\). Our second result is a global bifurcation theorem for system \eqref{new-equation}.

To state global bifurcation theorem, for \(\gamma > \delta > 0\), define the set \(\mathcal{O}_{\delta}\) as follows:
\begin{align}\label{mathcal{O}}
\mathcal{O}_{\delta} = 
\left\{
(\alpha, h) \in (\alpha_s, +\infty) \times X \,\middle|\, 
(h + 1) h_p > \delta ~ \text{in} ~ \overline{D}
\right\} \cap \mathcal{P}_{\delta},
\end{align}
where \(\mathcal{P}_{\delta}\) is given by
\begin{align*}
\mathcal{P}_{\delta} = 
\left\{
(\alpha, h) \in (\alpha_s, +\infty) \times X \,\middle|\, 
(h + 1)^3 + 2\lambda(\alpha, \gamma)(h + 1) - 2\alpha (h + 1)^2 > \delta ~ \text{on} ~ T 
\right\},
\end{align*}
We then define  
\[
\mathcal{Q}_{\delta} = \overline{ \{ (\alpha, h) \in \mathcal{O}_{\delta} : \mathcal{G}(\alpha, h) = 0,~ h_q \neq 0 \} }\subset (\alpha_s, \infty) \times X,
\]  
where  \(X\) is defined in \eqref{spaces}. Let \(\mathcal{C}_{\delta}\) be the connected component of \(\mathcal{Q}_{\delta}\) that contains the point \((\alpha_c, H(p))\). Thus, \(\mathcal{C}_{\delta}\) includes \(\mathcal{C}_{\text{loc}}\).
For \(\mathcal{C}_{\delta}\) , we have the followng theorem:
\begin{theorem}[Large-amplitude traveling-wave solution]  \label{large-waves}
For each $\delta \in (0,\gamma)$. Either
\begin{description}
\item[ (1)]  $\mathcal{C}_{\delta}$  is unbounded in $ (\alpha_s,\infty) \times X$, or
\item[ (2)] $\mathcal{C}_{\delta}$  contains another trivial point $(\alpha_k, H)$, or
\item[ (3)]  $\mathcal{C}_{\delta}$ contains a point $(\alpha_k, h) \in \partial \mathcal{O}_ {\delta}$ ,
\end{description}
where $\alpha_k$ is determined by the following equation
\begin{align}\label{new-equation-per-ode-3}
 \begin{cases}
\partial_p((H+1)^{-2}M_{p})+\gamma^2(1-k^2)(H+1)^{-2}M=0,\\
\beta(\alpha_k,\gamma)  M-\gamma^{-1}M_p=0,\quad p=0,\\
M=0,\quad p=-1,
\end{cases}
\end{align}
which has nontrivial solution, and in which 
\begin{align}\label{beta}
\beta(\alpha, \gamma) := \gamma^2 e^{3\gamma} \left( \alpha - e^{\gamma} \right).
\end{align}
\end{theorem}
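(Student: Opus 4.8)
The plan is to apply a global bifurcation theorem of Rabinowitz type, in the form adapted to free-boundary water-wave problems (as in the work of Constantin–Strauss and its descendants), to the operator equation $\mathcal{G}(\alpha,h)=0$ restricted to the open set $\mathcal{O}_\delta$. First I would recast $\mathcal{G}(\alpha,h)=0$ as an equation of the form $h = \mathcal{K}(\alpha,h)$ with $\mathcal{K}$ a compact operator on a suitable ball in $X$: this is done by inverting the principal (elliptic) part of $\mathcal{G}_1$ together with the oblique-derivative boundary condition encoded in $\mathcal{G}_2$, using Schauder theory on the fixed domain $\Omega=\mathbb{T}\times(-1,0)$. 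The set $\mathcal{O}_\delta$ is precisely the region where this reformulation is legitimate: the constraint $(h+1)h_p>\delta$ guarantees uniform ellipticity (it is the transformed version of $V-R<0$ staying bounded away from $0$), and $\mathcal{P}_\delta$ keeps the Bernoulli boundary condition $\mathcal{G}_2$ uniformly oblique and non-degenerate. On $\mathcal{O}_\delta$ one checks that $h\mapsto h-\mathcal{K}(\alpha,h)$ is a compact perturbation of the identity, so the Leray–Schauder degree is defined, and the local curve $\mathcal{C}_{\mathrm{loc}}$ from Theorem \ref{dingli08201} provides the required nontrivial bifurcation from $(\alpha_c,H)$.

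Next I would invoke the abstract global alternative: the connected component $\mathcal{C}_\delta$ of the closure of the nontrivial solution set containing $(\alpha_c,H)$ either (i) is unbounded in $(\alpha_s,\infty)\times X$, or (ii) returns to the trivial branch $\{(\alpha,H):\alpha>\alpha_s\}$, or (iii) reaches the boundary $\partial\mathcal{O}_\delta$. This is exactly the trichotomy in the statement, so the topological core of the argument is standard once the compactness and properness setup is in place. To make alternative (ii) quantitative I would linearize $\mathcal{G}$ about the trivial solution $H(p)=e^{\gamma(p+1)}-1$ at a general parameter value: writing $h=H+\varepsilon M+O(\varepsilon^2)$ and separating variables $M(q,p)=\sum_k M_k(p)\cos(kq)$, the linearized system reduces, mode by mode, to the Sturm–Liouville problem \eqref{new-equation-per-ode-3} with the boundary coefficient $\beta(\alpha,\gamma)=\gamma^2 e^{3\gamma}(\alpha-e^{\gamma})$; the parameter values $\alpha_k$ at which \eqref{new-equation-per-ode-3} has a nontrivial solution are then precisely the candidate secondary bifurcation points where $\mathcal{C}_\delta$ can meet the trivial branch. (One should note that $\alpha_c$ itself is the $k=1$ member of this family, consistent with Theorem \ref{dingli08201}.)

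The main obstacle is establishing the properness/compactness needed for the global theorem — specifically, showing that on any bounded, closed subset of $\mathcal{O}_\delta$ the solution set is compact and that solutions cannot escape $\mathcal{O}_\delta$ except through $\partial\mathcal{O}_\delta$ itself. This requires uniform Schauder estimates for the quasilinear problem \eqref{new-equation} that degenerate exactly when $(h+1)h_p\to 0$ or when the Bernoulli condition loses obliqueness, i.e. on $\partial\mathcal{O}_\delta$; obtaining constants in these estimates that depend only on $\delta$ (and on an a priori bound for $\|h\|_X$) is the delicate analytic step, and it is what forces the appearance of $\delta$ and of the auxiliary set $\mathcal{P}_\delta$ in the statement. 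A secondary technical point is ruling out that $\mathcal{C}_\delta$ is a closed loop not meeting the trivial branch: this is handled by the usual parity/crossing-number argument at $(\alpha_c,H)$, using that the Crandall–Rabinowitz eigenvalue crossing established in the proof of Theorem \ref{dingli08201} is simple and transversal, so the Leray–Schauder degree jumps across $\alpha_c$ and the component cannot be compact without hitting one of alternatives (ii) or (iii).
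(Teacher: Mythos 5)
Your overall topological skeleton is the right one and matches the paper: the trichotomy is obtained by a degree-theoretic contradiction argument (assume $\mathcal{C}_{\delta}$ is bounded, avoids the trivial branch away from $\alpha_c$, and avoids $\partial\mathcal{O}_{\delta}$; then the degree over a suitable neighbourhood is simultaneously $\alpha$-independent and forced to jump as $\alpha$ crosses $\alpha_c$ because a simple eigenvalue crosses zero), and you correctly identify the values $\alpha_k$ as the parameters where the mode-by-mode linearization about $H$ degenerates. What you call a ``secondary technical point'' is in fact the entire engine of the proof.

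The genuine gap is in your first step. You propose to invert the principal elliptic part together with the boundary operator so as to rewrite $\mathcal{G}(\alpha,h)=0$ as $h=\mathcal{K}(\alpha,h)$ with $\mathcal{K}$ compact, and then use the classical Leray--Schauder degree. This does not work here, and the paper says so explicitly: the Bernoulli boundary condition $\mathcal{G}_2$ is fully nonlinear in $h$, $h_q$, $h_p$ (quartic in $h_p$), so there is no fixed linear boundary operator against which to invert, and the equation is not a compact perturbation of the identity in $X$. The paper instead uses the Healey--Simpson degree for proper nonlinear Fredholm maps of index zero, defined at a regular value by $\sum_w(-1)^{n(w)}$, where $n(w)$ is the number of positive real eigenvalues of $\mathcal{G}_{1h}(\alpha,w)$ subject to the \emph{linearized} boundary condition $\mathcal{G}_{2h}(\alpha,w)h=0$. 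Making this legitimate requires three substantive lemmas that your proposal does not supply: properness of $\mathcal{G}$ on closed bounded subsets of $\overline{\mathcal{O}_{\delta}}$ (proved by differentiating the system in $q$ and applying mixed Schauder estimates to the differences $\eta_j-\eta_k$), the Fredholm index-zero property on all of $\mathcal{O}_{\delta}$ (by continuity of the index from the point $(\alpha_c,H)$), and --- most delicately --- the finiteness of $n(w)$, which rests on Agmon-type resolvent estimates $c_1\|\phi\|_X\le\lambda^{s/2}\|(\mathcal{P}-\lambda I)\phi\|_{Y_1}+\lambda^{(s+1)/2}\|\mathcal{Q}\phi\|_{Y_2}$ obtained by adding an auxiliary variable $t$ and a factor $e^{i|\lambda|^{1/2}t}\eta(t)$. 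The paper flags precisely this point as the main obstacle: the original eigenvalue problem with the nonlinear boundary condition perturbed ($\mathcal{G}_{2w}h=\sigma h$) does not have finitely many positive eigenvalues, so one must pass to the modified problem with $\mathcal{G}_{2w}h=0$. Without replacing your compact-reformulation step by this machinery (or an equivalent), the degree you invoke is not defined and the argument does not close.
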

\subsection{Main technical difficulties behind the proof}

For the system \eqref{moxing0523}, the existence of traveling wave solutions is equivalent to the existence of solutions to the free boundary problem \eqref{equation-stream}. However, since \(S(\Theta)\) is an unknown function to be determined, we cannot directly apply bifurcation theory to study the bifurcation behavior of its solutions, as the parameter governing the bifurcation is not explicitly identified.

To address this challenge, we observe that \(p_0 = \int_{1}^{S(\Theta)} \frac{\partial \Psi}{\partial R}  dR\) is a constant. Combined with the assumption \(V < R\), this allows us to use the transformation \eqref{bianhuan1113} to convert problem \eqref{equation-stream} into problem \eqref{new-equation}. A careful analysis shows that the two problems are equivalent. On one hand, problem \eqref{new-equation} eliminates the free surface. On the other hand, by seeking the \(p\)-dependent solution \(H(p)\) to problem \eqref{new-equation} (see \autoref{pingfanjie0820}), we identify the key control parameter \(\alpha\) that determines the bifurcation behavior of the solutions.

Regarding local bifurcation, there is relatively little literature applying rigorous mathematical theory to classify bifurcation types in water wave problems, primarily due to the extensive computations required.  We computed \(\mathcal{F}_{f}(\alpha_c, f)\), \(\mathcal{F}_{ff}(\alpha_c, f)\), and \(\mathcal{F}_{fff}(\alpha_c, f)\). Since our results indicate that the bifurcation is not transcritical, it was necessary to solve the partial differential equation \eqref{buweiyi0713}. Although the general solution consists of a particular solution plus the solution to the homogeneous equation (obtainable via separation of variables), the computation of the particular solution posed significant challenges. During implementation, we streamlined the process by leveraging the structure of the right-hand side, ultimately deriving the expression for \(\mathcal{O}\) in \eqref{fuhao0715}.

For global bifurcation, the number of positive eigenvalues of the problem
\[
\begin{cases}
\mathcal{G}_{1w}(\alpha, w)h = \sigma h, \\
\mathcal{G}_{2w}(\alpha, w)h = \sigma h
\end{cases}
\]
prevented the direct application of classical topological theory to \eqref{operator-equation}. We instead show that the number of positive eigenvalues of the modified problem
\[
\begin{cases}
\mathcal{G}_{1w}(\alpha, w)h = \sigma h, \\
\mathcal{G}_{2w}(\alpha, w)h = 0
\end{cases}
\]
is finite. This result, combined with \autoref{lemma31}–\autoref{lemma32}, allows us to extend classical topological theory to problem \eqref{operator-equation} (see  \autoref{degree-theory}).

The remainder of this paper is organized as follows: Section \ref{zhengming0820} presents the proof of the local bifurcation theorem; Section \ref{formu0520} details the global bifurcation theorem; and Section \ref{shuzhi0805} provides a numerical investigation of the local bifurcation.

\section{Proofs of \autoref{dengjiaxingzhengming0618}}\label{dengjiaxingzhengming0618-1}


\begin{proof}
Assume that the problem \eqref{main-equations}--\eqref{bd-c} admits a solution \(\left(u_{r}, u_{\theta}, P, \eta\right)\). Then, by following the derivation steps from \eqref{main-equations}--\eqref{bd-c} to \eqref{new-equation}, one can show that there exists a function \(h(p, q)\) satisfying problem \eqref{new-equation}. Here, all functions involved are periodic in the variables \(q\).

Conversely, suppose \(h = h(q, p) \in C^2(\overline{\Omega})\) with \(h_p = \frac{\partial h}{\partial p} > 0\) is a solution to problem \eqref{new-equation}.
Let us define two functions $F(q, p)$ and $G(q, p)$ through $h(q, p)$ as follows
\begin{align*}
F(q, p) = \frac{1}{h_p}, \quad G(q, p) = -\frac{h_q}{h_p}.
\end{align*}  
Using the equality \(h_{qp} = h_{pq}\), it follows that  
\begin{align}\label{nuli0426}
F_q + F_p G - G_p F = 0.
\end{align}  
The free surface of the flow is given by \(S(\Theta) = h(\Theta, 0) + 1\).

For any fixed \(\Theta \in (0, 2\pi)\), consider the ordinary differential equation  
\begin{align}\label{jie0426}
\begin{cases}
\displaystyle
\frac{\partial \Psi}{\partial R}(R, \Theta) = -F(\Theta, -\Psi(R, \Theta)), \\
\Psi(S(\Theta), \Theta) = 0.
\end{cases}
\end{align}  
By the smoothness of \(F\), it follows that \eqref{jie0426} admits a unique local solution \(\Psi(R, \Theta)\), which is strictly decreasing in \(R\). 
Moreover, since \(F \geq \delta > 0\) for some constant \(\delta\), the function \(\Psi(R, \Theta)\) increases at a rate no less than \(\delta\) as \(R\) decreases. 
This allows the solution to be extended to a value \(y = y(\Theta)\) for each fixed \(\Theta\) such that  
\[
\Psi(y(\Theta), \Theta) = 1 \quad \text{and} \quad y(\Theta) < S(\Theta).
\]  
Due to the periodicity of \(F(\Theta, p)\) in \(\Theta\), the solution \(\Psi(R, \Theta)\) is also periodic in \(\Theta\).

Subsequently, we prove that  
\begin{align}\label{nu0426}
y(\Theta) = 1, \quad \forall ~\Theta \in (0, 2\pi).
\end{align}  
To this end, we first show that  
\begin{align}\label{diyibu0426}
\Psi_\Theta(R, \Theta) = -G(\Theta, -\Psi(R, \Theta)), \quad \text{for all } R \in [y(\Theta), S(\Theta)].
\end{align}  
For convenience, let us define \(W(\Theta, R) = -G(\Theta, -\Psi(R, \Theta))\). A direct computation yields:  
\begin{align*}
\begin{cases}
W_R = G_p \Psi_R = -G_p F = -F_q - F_p G = -F_q + F_p W, \\
(\Psi_\Theta)_R = -F_q + F_p \Psi_\Theta,
\end{cases}
\end{align*}  
where we have used \eqref{nuli0426} and \eqref{jie0426}. Thus, \(\Psi_\Theta\) and \(W\) satisfy the same equation.  Next, we show that they share the same initial data. It follows from \(\Psi(S(\Theta), \Theta) = 0\) and \(h(\Theta, 0) = S(\Theta) - 1\) 
that 
\[
\Psi_\Theta(S(\Theta), \Theta) = F(\Theta, 0) S'(\Theta), \quad h_q(\Theta, 0) = S'(\Theta).
\]
Furthermore, we have  
\[
\begin{aligned}
W(\Theta, S(\Theta)) &= -G(\Theta, -\Psi(S(\Theta), \Theta)) \\
&= h_q(\Theta, 0) F(\Theta, -\Psi(S(\Theta), \Theta)) \\
&= F(\Theta, 0) S'(\Theta),
\end{aligned}
\]  
where we have used the fact that \(\Psi(S(\Theta), \Theta) = 0\). Thus, we obtain  
\[
W(\Theta, S(\Theta)) = \Psi_\Theta(S(\Theta), \Theta),
\]  
which shows that \(\Psi_\Theta\) and \(W\) satisfy the same initial condition at \(R = S(\Theta)\).  Therefore, by the uniqueness theorem for ordinary differential equations, we conclude that \eqref{diyibu0426} holds.

Moreover, from \(\Psi(y(\Theta), \Theta) = 1\), we differentiate with respect to \(\Theta\) to obtain  
\[
\Psi_R(y(\Theta), \Theta) y'(\Theta) + \Psi_\Theta(y(\Theta), \Theta) = 0.
\]  
Substituting the expressions for \(\Psi_R\) and \(\Psi_\Theta\) from \eqref{jie0426} and \eqref{diyibu0426}, respectively, yields  
\[
-F(\Theta, -1) y'(\Theta) - G(\Theta, -1) = 0.
\]  
Since \(h(\Theta, -1) = 0\), it follows that \(h_q(\Theta, -1) = 0\), which implies \(G(\Theta, -1) = 0\). Additionally, \(F(\Theta, -1) = \frac{1}{h_p(\Theta, -1)} > 0\). Therefore, $y'(\Theta) = 0$,
which shows that \(y(\Theta)\) is constant.  

To determine the value of \(y\), we use the conditions \(h(\Theta, -1) = 0\) and \(h(0, 0) = S(0) - 1\). Applying the Newton–Leibniz formula, we obtain  
\begin{align*}
\begin{aligned}
S\left(0\right)-1
&=h\left(0,0\right)
=\int_{-1}^{0}h_{p}\left(0,p\right)dp=\int^{0}_{-1}\frac{1}{F\left(0,p\right)}dp 
\\
&=\int_{y\left(0\right)}^{S\left(0\right)}\frac{1}{F\left(0,-\Psi\left(R,0\right)\right)}
d\left(-\Psi\left(R,0\right)\right)
\\&=\int_{y\left(0\right)}^{S\left(0\right)}\frac{1}{-\Psi_{R}\left(R,0\right)}
d\left(-\Psi\left(R,0\right)\right)
\\
&=S\left(0\right)-y\left(0\right),
\end{aligned}
\end{align*}
which implies that \(y\left(0\right)=1\). Then \eqref{nu0426} is verified.
We now have  
\[
\Psi(1, \Theta) = 1 \quad \text{and} \quad \Psi(S(\Theta), \Theta) = 0.
\]  

We now prove the folloiwng identity  
\begin{align}\label{yiguzuoqi0426}
h(\Theta, -\Psi(R, \Theta)) = R - 1.
\end{align}  
Differentiating \(h(\Theta, -\Psi(R, \Theta))\) with respect to \(R\) and \(\Theta\), and using \eqref{jie0426} and \eqref{diyibu0426}, we obtain  
\[
\frac{\partial}{\partial R} h(\Theta, -\Psi(R, \Theta)) = 1, \quad
\frac{\partial}{\partial \Theta} h(\Theta, -\Psi(R, \Theta)) = 0.
\]  
Combined with \(\eqref{new-equation}_3\), namely \(h(\Theta, -1) = 0\), these derivatives imply that \eqref{yiguzuoqi0426} holds.

Subsequently, following the steps in \eqref{zaituidaoyici1113} to derive \eqref{pde-m}, we obtain \(\eqref{pde-m}_1\) and \(\eqref{pde-m}_3\) from \(\eqref{new-equation}_1\) and \(\eqref{new-equation}_2\), respectively. The equation \(\eqref{pde-m}_2\) follows from \autoref{zong0426}. Thus, we conclude that \(\Psi(R, \Theta)\) is a solution to the system \eqref{pde-m}.   Using the scaling relations \eqref{hai0819}, the coordinate transformation \eqref{haishi0819}, and the velocity expressions in \(\eqref{haishixuyao}_4\), we define  
\[
u_r = \frac{a^2 \omega_0 p_0}{r} \frac{\partial \Psi}{\partial \Theta} \Big|_{\Theta = \theta - t \omega_0}, \quad
u_\theta = r \omega_0 - a \omega_0 p_0 \frac{\partial \Psi}{\partial R} \Big|_{R = \frac{r}{a}}, \quad
\eta = a h(\theta - t \omega_0, 0) + a.
\]  
The pressure \(P\) is then determined by solving \(\eqref{main-equations}_1\) and \(\eqref{main-equations}_2\). Therefore, we obtain a solution 
to the system \eqref{main-equations}--\eqref{bd-c} satisfying
\[
(u_r, u_\theta, \eta, P) \in C^1(\overline{\Omega}) \times C^1(\overline{\Omega}) \times C^1(0, 2\pi) \times C^1(\overline{\Omega})
\]  

\end{proof}

\section{Small-amplitude traveling waves }\label{zhengming0820}
This section is devoted to considering the existence of local bifurcation-small-amplitude waves. As described in \autoref{chongxinlaiyoujihui0820}, we will divide several steps to obtain \autoref{dingli08201} .
\subsection{Bifurcation point}\label{pingfanjie0820}
To investigate the bifurcation structure of the system \eqref{new-equation}, it is essential to identify a particular class of solutions. For this purpose, we consider solutions that depend solely on the variable \(p\), i.e.,
\[
h(q, p) = H(p),
\]  
where \(H := H(p)\) is a function independent of \(q\). Under this assumption, the system \eqref{new-equation} reduces to an ordinary differential equation for \(H(p)\)
as follows
\begin{align}\label{new-equation-tivial}
 \begin{cases}
(H+1)H''-(H')^2=0,\\
(H')^2\left[
2\lambda+(H+1)^2-2\alpha (H+1)\right]-p_{0}^2
=0,\quad p=0,\\
H=0,\quad p=-1.
\end{cases}
\end{align}

One can solve the simplified system \eqref{new-equation-tivial} explicitly and obtain the solution  
\begin{align}\label{one-s}
H(p) = e^{\gamma(p + 1)} - 1,
\end{align}  
where the parameter \(\gamma\) satisfies the algebraic equation  
\begin{align}\label{biaodaishi0821}
 p_0^2 + \gamma^2 e^{2\gamma} \left(2\alpha e^{\gamma} - 2\lambda - e^{2\gamma}\right) = 0.
\end{align}
Moreover, we explicitly have  
\[
\lambda(\alpha, \gamma) := \frac{p_0^2 + e^{3\gamma} \gamma^2 \left(2\alpha - e^{\gamma}\right)}{2\gamma^2 e^{2\gamma}}.
\]  
Here, we consistently assume the following conditions:  
\[
0 < \gamma < 1, \quad e^{4\gamma} \gamma^2 > p_0^2, \quad \text{and} \quad \alpha \geq \alpha_s := \frac{1}{2} \left( e^{\gamma} - \frac{p_0^2}{e^{3\gamma} \gamma^2} \right),
\]  
which together ensure that  
\[
\alpha > 0 \quad \text{and} \quad \lambda(\alpha, \gamma) \geq 0.
\]

In the subsequent analysis, we treat \(\alpha\) as the principal control parameter for the system \eqref{new-equation}. We will demonstrate that there exists a critical value \(\alpha_c\) such that for \(\alpha > \alpha_c\), the system \eqref{new-equation} admits a solution \(h(q, p)\) bifurcating from the trivial solution \eqref{one-s}. This bifurcated solution corresponds to a traveling wave solution of the governing water wave equations \eqref{main-equations}.  

To establish this result, we linearize the nonlinear system \eqref{operator-equation} about the base solution \(H(p)\) and obtain the following eigenvalue problem:
\[
D_h \mathcal{G} \big|_{h = H} w = \sigma w,\quad w\in X,
\]
where the Fréchet derivative \(D_h \mathcal{G} \big|_{h = H}\) is explicitly given by the following expressions:
\begin{align}\label{new-equation-per1}
 \begin{cases}
(H+1)^2(\gamma^2w_{qq}+w_{pp})-2(H+1)H_pw_p+\left[2(H+1)H_{pp}-(H_p)^2\right]w=\sigma w,\\
2e^{\gamma}\left[p_{0}^2\gamma^{-1} w_p-\beta(\alpha,\gamma)w \right] =\sigma w,\quad p=0,\\
w=0,\quad p=-1.
\end{cases}
\end{align}
The preceding eigenvalue problem is equivalent to
\begin{align}\label{new-equation-per}
 \begin{cases}
\partial_q\left[(H+1)^{-2}\gamma^2w_{q}\right]+\partial_p\left[(H+1)^{-2}w_{p}\right]+\gamma^2(H+1)^{-2}w=\sigma  (H+1)^{-4}w,\\
2e^{\gamma}\left[p_{0}^2\gamma^{-1} w_p-\beta(\alpha,\gamma)w \right] =\sigma w,\quad p=0,\\
w=0,\quad p=-1.
\end{cases}
\end{align}
where the function \(\beta(\alpha, \gamma)\), depending on the parameters \(\gamma\) and \(\alpha\), is defined by  \eqref{beta}.

For the equation \eqref{new-equation-per}, the periodic boundary conditions allow us to seek a solution of the form  
\[
w(q, p) = M(p) \cos(kq),
\]  
where \(k \in \mathbb{Z}\) is the wave number. Substituting this ansatz into \eqref{new-equation-per}, we find that \(M(p)\) satisfies the following ordinary differential eigenvalue problem:  
\begin{align}\label{new-equation-per-ode}
\begin{cases}
\displaystyle
\partial_p \left[ (H + 1)^{-2} M_p \right] + \gamma^2 (1 - k^2) (H + 1)^{-2} M = \sigma (H + 1)^{-4} M, & -1 < p < 0, \\
\displaystyle
2e^{\gamma} \left[ p_0^2 \gamma^{-1} M_p - \beta(\alpha, \gamma) M \right] = \sigma M, & p = 0, \\
\displaystyle
M = 0, & p = -1.
\end{cases}
\end{align}

Our objective is to determine the smallest critical value \(\alpha_c\) of the parameter \(\alpha\) such that the eigenvalue \(\sigma\) of the preceding problem vanishes for some nonzero wave number \(k \neq 0\), i.e.,  
\[
\sigma(\alpha_c) = 0.
\]  
To this end, we introduce the following modified eigenvalue problem:
\begin{align}\label{new-equation-per-ode-212}
 \begin{cases}
\partial_p\left[(H+1)^{-2}M_{p}\right]+\gamma^2(1-k^2)(H+1)^{-2}M=\tilde{\sigma}  (H+1)^{-4}M,\\
2e^{\gamma}\left[p_{0}^2\gamma^{-1}M_p-\beta(\alpha,\gamma) M \right]=0,\quad p=0,\\
M=0,\quad p=-1.
\end{cases}
\end{align}

Note that the smallest value \(\alpha_c\) of \(\alpha\) for which the modified eigenvalue problem satisfies \(\tilde{\sigma}(\alpha_c) = 0\) with \(k \neq 0\) is identical to that of the original eigenvalue problem \eqref{new-equation-per-ode}. Therefore, to determine \(\alpha_c\), it suffices to analyze the modified eigenvalue problem \eqref{new-equation-per-ode-212}.  

This modified eigenvalue problem possesses a variational structure, enabling the definition  
\[
-\tilde{\sigma}_k(\alpha) := \inf_{M \in \mathcal{H}} \frac{ 
E_1(\alpha,M)
}{ \int_{-1}^0 (H + 1)^{-4} M^2  dp },
\]  
where the functional $E_1(\alpha,M)$ is defined by
\[
\begin{aligned}
E_1(\alpha,M)= & -\beta(\alpha, \gamma) \gamma e^{-2\gamma} M^2(0) / p_0^2 
+ \gamma^2 (k^2 - 1) \int_{-1}^0 (H + 1)^{-2} M^2  dp 
    \\&+ \int_{-1}^0 (H + 1)^{-2} M_p^2  dp,
\end{aligned}
\]
and \(\mathcal{H}\) denotes an appropriate function space incorporating the boundary conditions, defined by 
\[
\mathcal{H}=\left\{M\in H^{1}\left(\left[-1,0\right]\right)|M(-1)=0\right\},
\]
from which one can see that $\alpha_c$ is determined by $\tilde{\sigma}_1(\alpha)=0$. Taking $k=1$, we then have
\begin{align}\label{eigen-variation}
-\tilde{\sigma}_1(\alpha):=\inf_{M\in \mathcal{H}}\frac{-\beta(\alpha,\gamma) \gamma
e^{-2\gamma}
M^2(0)/p_{0}^2
+
\int_{-1}^0(H+1)^{-2}M_{p}^2\,dp}{\int_{-1}^0(H+1)^{-4}M^2\,dp}.
\end{align}

Let us choose the test function
\[
M(p)=e^{\gamma(p+1)}-1
\]
Then, we obtain the upper bound  
\begin{align*}
-\tilde{\sigma}_1(\alpha)\leq \eta(\alpha):&=
\frac{\gamma^3e^{\gamma }
\left(
e^{\gamma }-\alpha 
\right)
M^2(0)/p_{0}^2
+
\int_{-1}^0(H+1)^{-2}M_{p}^2\,dp}{\int_{-1}^0(H+1)^{-4}M^2\,dp}
\\
&=\frac{\gamma^2\left[\gamma e^{\gamma}\left(e^\gamma-\alpha\right)\left(e^\gamma-1\right)^2/p_{0}^2+1\right]}{\int_{-1}^0(H+1)^{-4}M^2\,dp}.
\end{align*}
From this, it can be observed that there exists a value \(\alpha_0\) such that  
\[
-\tilde{\sigma}_1(\alpha) \leq \eta(\alpha) < 0, \quad \text{for all } \alpha > \alpha_0.
\]  
Note that for \(\alpha \leq e^{\gamma}\), it follows from \eqref{eigen-variation} that  
\[
-\tilde{\sigma}_1(\alpha) > 0.
\]  
Since \(-\tilde{\sigma}_1(\alpha)\) depends continuously on \(\alpha\), there must exist a critical value \(\alpha_c\) such that  
\begin{align}\label{guji1207}
\tilde{\sigma}_1(\alpha_c) = 0.
\end{align}
The critical value $\alpha_c$ is determined by the existence of nontrivial solution of 
the following equation
\begin{align}\label{new-equation-per-ode-2}
 \begin{cases}
\partial_p\left[(H+1)^{-2}M_{p}\right]=0,\\
\gamma^2e^{3\gamma }
\left(
\alpha_c -e^{\gamma }
\right) M-p_{0}^2\gamma^{-1}M_p=0,\quad p=0,\\
M=0,\quad p=-1.
\end{cases}
\end{align}
The general solution of the differential equation \(\partial_p \left[ (H + 1)^{-2} M_p \right] = 0\) is given by  
\[
M(p) = B_1 e^{2\gamma(p + 1)} + B_2,
\]  
where \(B_1\) and \(B_2\) are arbitrary constants. From this, it follows that the boundary value problem \eqref{new-equation-per-ode-2} admits a nontrivial solution if and only if  
\begin{align}\label{critical-alpha-1}
\begin{vmatrix}
1 & 1 \\
e^{2\gamma} \beta(\alpha_c, \gamma) - 2p_0^2 e^{2\gamma} & \beta(\alpha_c, \gamma)
\end{vmatrix} = 0.
\end{align}
We then obtain the critical value  
\begin{align}\label{critical-alpha}
\alpha_c = e^{\gamma} + \frac{2p_0^2}{\gamma^2 e^{\gamma} (e^{2\gamma} - 1)},
\end{align}  
which is clearly greater than \(\alpha_s = \frac{1}{2} \left( e^{\gamma} - \frac{p_0^2}{e^{3\gamma} \gamma^2} \right)\).  

Furthermore, it can be readily shown that  
\begin{align}\label{guji12071}
&\begin{cases}
\tilde{\sigma}_0(\alpha_c) > 0, \\
\tilde{\sigma}_k(\alpha_c) < 0, \quad k \geq 2,
\end{cases}\\
\label{jizhu0805}
&\sigma_1(\alpha) 
\begin{cases}
> 0, & \alpha > \alpha_c, \\
= 0, & \alpha = \alpha_c, \\
< 0, & \alpha < \alpha_c.
\end{cases}
\end{align}  
Hence, \((\alpha_c, H)\) is a bifurcation point, a fact that will be established in the following subsection.

\subsection{Existence of bifurcation solution}\label{formu0820}

The Crandall-Rabinowitz theorem \cite{Kielhoefer2012} on bifurcation from a simple eigenvalue is a fundamental tool for studying bifurcation in nonlinear equations. 
We also use this theorem to study the problem \eqref{new-equation}.
For convenience, let us recall the Crandall-Rabinowitz theorem.
\begin{theorem}[Crandall-Rabinowitz]\label{the21}
Let $X$ and $Y$ be two Banach spaces, $I$ be an interval in $\R$ containing 
$\alpha_c$, and $\mathcal{F}: I\times X\to Y$ a continuous map with the following properties:
\begin{itemize}
\item [(1)] $\mathcal{F}(\alpha, 0)=0$ for all $\alpha\in I$;
\item [(2)] $\mathcal{F}_{\alpha}, \mathcal{F}_{f}$ and $\mathcal{F}_{\alpha f}$
exist and are continuous;
\item [(3)] $\mathcal{N}(\mathcal{F}_{f}(\alpha_c,0))$ and 
$Y\setminus \mathcal{R}(\mathcal{F}_{f}(\alpha_c,0))$
are one-dimensional, with the nullspace generated by $h^*$,and 
\item [(4)] $\mathcal{F}_{\alpha f}h^*\notin \mathcal{R}(\mathcal{F}_{f}(\alpha_c,0))$;
\end{itemize}
Then there exists a nontrivial continuously differentiable curve through $\left(\alpha_{c},0\right)$
\begin{align*}
    \left\{\left(\lambda\left(s\right),v\left(s\right)\right)|s\in\left(-\delta,\delta\right),~\left(\lambda\left(0\right),v\left(0\right)\right)=\left(\alpha_{c},0\right)\right\},
\end{align*}
such that $
\mathcal{F}\left(\lambda\left(s\right),v\left(s\right)\right)=0,~\text{for~}s\in\left(-\delta,\delta\right)$.
\end{theorem}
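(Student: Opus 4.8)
\emph{Proof strategy.} The plan is to prove this by the classical Lyapunov--Schmidt reduction, collapsing $\mathcal{F}(\alpha,f)=0$ to a single scalar equation and then solving that equation with the implicit function theorem. Write $\mathcal{N} := \mathcal{N}(\mathcal{F}_{f}(\alpha_c,0))$ and $\mathcal{R} := \mathcal{R}(\mathcal{F}_{f}(\alpha_c,0))$. By hypothesis (3), $\mathcal{N}=\mathrm{span}\{h^*\}$ is one-dimensional and $\mathcal{R}$ has codimension one, so both are complemented: $X = \mathcal{N}\oplus Z$ with $Z$ closed, and $Y = \mathcal{R}\oplus W$ with $\dim W = 1$. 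Let $Q:Y\to W$ be the continuous projection along $\mathcal{R}$. Writing $f = s h^* + z$ with $s\in\R$, $z\in Z$, the equation $\mathcal{F}(\alpha, f)=0$ is equivalent to the pair consisting of the \emph{auxiliary equation} $(I-Q)\mathcal{F}(\alpha, s h^* + z)=0$ in $\mathcal{R}$ and the \emph{bifurcation equation} $Q\mathcal{F}(\alpha, s h^* + z)=0$ in $W\cong\R$.

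First I would solve the auxiliary equation for $z$. Setting $\Phi(\alpha,s,z):=(I-Q)\mathcal{F}(\alpha, s h^*+z)$, one has $\Phi(\alpha_c,0,0)=0$; hypothesis (2) makes $\Phi$ continuously differentiable, and $\Phi_{z}(\alpha_c,0,0) = (I-Q)\mathcal{F}_{f}(\alpha_c,0)\big|_{Z}$ is an isomorphism of $Z$ onto $\mathcal{R}$ (injective since $\mathcal{N}\cap Z=\{0\}$, surjective since $\mathcal{F}_{f}(\alpha_c,0)$ maps $X$ onto $\mathcal{R}$ and kills $\mathcal{N}$, and $I-Q$ is the identity on $\mathcal{R}$). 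The implicit function theorem then yields a $C^1$ map $z=\psi(\alpha,s)$ near $(\alpha_c,0)$ with $\psi(\alpha_c,0)=0$ and $\Phi(\alpha,s,\psi(\alpha,s))\equiv 0$. Since $\mathcal{F}(\alpha,0)=0$, uniqueness forces $\psi(\alpha,0)=0$ for all $\alpha$; and differentiating $\Phi(\alpha,s,\psi)\equiv 0$ in $s$ at $(\alpha_c,0)$, together with $\mathcal{F}_{f}(\alpha_c,0)h^*=0$, gives $\mathcal{F}_{f}(\alpha_c,0)\psi_{s}(\alpha_c,0)=0$, hence $\psi_{s}(\alpha_c,0)\in\mathcal{N}\cap Z=\{0\}$. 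The identity $\psi_{s}(\alpha_c,0)=0$ is the point that makes the transversality condition (4) directly usable.

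Next I would treat the scalar bifurcation equation $g(\alpha,s):=Q\mathcal{F}(\alpha, s h^* + \psi(\alpha,s))=0$. Because $\psi(\alpha,0)=0$ and $\mathcal{F}(\alpha,0)=0$, we have $g(\alpha,0)\equiv 0$, so $s=0$ is the trivial branch; factoring $g(\alpha,s)=s\,\tilde g(\alpha,s)$ with $\tilde g(\alpha,s)=\int_{0}^{1} g_{s}(\alpha,\tau s)\,d\tau$ isolates the nontrivial branch as the zero set of $\tilde g$. One computes $\tilde g(\alpha_c,0)=g_{s}(\alpha_c,0)=Q\mathcal{F}_{f}(\alpha_c,0)\big[h^*+\psi_{s}(\alpha_c,0)\big]=0$, since $h^*\in\mathcal{N}$ and the remaining term lies in $\mathcal{R}$. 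Differentiating in $\alpha$ and using $\psi(\alpha,0)=0$ and $\psi_{s}(\alpha_c,0)=0$, the second-order-in-$f$ contributions drop out and one is left with $\tilde g_{\alpha}(\alpha_c,0)=Q\mathcal{F}_{\alpha f}(\alpha_c,0)h^*$, which is nonzero precisely by (4). A second application of the implicit function theorem to $\tilde g=0$ at $(\alpha_c,0)$ yields a $C^1$ function $\alpha=\lambda(s)$ with $\lambda(0)=\alpha_c$, and then $v(s):=s h^* + \psi(\lambda(s),s)$ is the sought curve; since $\psi_{s}(\alpha_c,0)=0$ one has $v(s)=s h^* + o(s)$, so $v(s)/s\to h^*\neq 0$ and the branch is genuinely nontrivial.

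The step I expect to be the real obstacle is the regularity bookkeeping in the previous paragraph: the hypotheses supply $\mathcal{F}_{\alpha}$, $\mathcal{F}_{f}$, $\mathcal{F}_{\alpha f}$ but \emph{not} $\mathcal{F}_{ff}$, so $\psi$ is only $C^1$ and one cannot blithely differentiate $\psi_{s}$ in $\alpha$. The standard remedy is to observe that $\alpha\mapsto\mathcal{F}_{f}(\alpha,0)\in\mathcal{L}(X,Y)$ is itself $C^1$ with derivative $\mathcal{F}_{\alpha f}(\alpha,0)$, so the coefficient $g_{s}(\alpha,0)$ can instead be analyzed through a separate, purely linear implicit-function argument carried out at the level of $\mathcal{F}_{f}(\alpha,0)$ alone — which never invokes $\mathcal{F}_{ff}$ — yielding both the $C^1$ dependence of $\alpha\mapsto g_{s}(\alpha,0)$ and the value of its derivative at $\alpha_c$. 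This is exactly the care taken in the original argument of Crandall and Rabinowitz, of which the statement above is a verbatim restatement, so the most efficient course is to reproduce or cite that proof.
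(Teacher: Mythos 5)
The paper does not prove this statement at all: it is quoted verbatim as the classical Crandall--Rabinowitz theorem and justified only by a citation to \cite{Kielhoefer2012}. Your Lyapunov--Schmidt argument is the standard proof of exactly that theorem (and is essentially the one in the cited reference): the decomposition $X=\mathcal{N}\oplus Z$, $Y=\mathcal{R}\oplus W$, the implicit-function solution $z=\psi(\alpha,s)$ of the auxiliary equation with $\psi(\alpha,0)=0$ and $\psi_s(\alpha_c,0)=0$, the factorization $g=s\,\tilde g$ of the bifurcation equation, and the identification $\tilde g_\alpha(\alpha_c,0)=Q\mathcal{F}_{\alpha f}(\alpha_c,0)h^*\neq 0$ are all correct, and you rightly flag the only delicate point, namely that the absence of $\mathcal{F}_{ff}$ from the hypotheses forces the $C^1$-in-$\alpha$ dependence of $g_s(\alpha,0)$ to be extracted from the $C^1$ map $\alpha\mapsto\mathcal{F}_f(\alpha,0)$ rather than from a second $f$-derivative. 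Nothing further is needed; citing or reproducing the argument of Crandall--Rabinowitz or Kielh\"ofer, as you propose, is exactly what the paper itself does.
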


To verify that \((\alpha_c, H)\) is indeed a bifurcation point and to characterize the corresponding bifurcated solution, we begin by rewriting the nonlinear system \eqref{operator-equation} through the substitution  
\[
h(q, p) = H(p) + f(q, p).
\]  
We then define the following nonlinear operator \(\mathcal{F}(\alpha, f)\) in terms of \(\mathcal{G}\):
\begin{align*}
\mathcal{F}(\alpha, f) := \mathcal{G}(\alpha, H + f) = \left( \mathcal{F}_1(\alpha, f), \mathcal{F}_2(\alpha, f) \right),
\end{align*}
where the components are explicitly given by
\begin{align*}
\begin{aligned}
&\begin{aligned}
\mathcal{F}_1(\alpha,f)=
&f_{qq} (H'+f_p)^2
-(1+H+f)(H'+f_p)^2+(1+H+f)^2(H''+f_{pp})\\
&-2f_q(H'+f_p)f_{qp}+ 
(H''+f_{pp})(f_q)^2=0,\quad -1<p<0,
\end{aligned}\\
&\begin{aligned}
\mathcal{F}_2(\alpha,f)=&\left(1+H+f\right)^2\left(H'+f_p\right)^2\left[
2\lambda(\alpha,\gamma)+(1+H+f)^2-2\alpha \left(1+H+f\right)\right]
\\
&-p_{0}^2\left(f_q\right)^2-p_{0}^2\left(1+H+f\right)^2
=0, \quad p=0,
\end{aligned}
\end{aligned}
\end{align*}

Note that \(\mathcal{F}(\alpha, 0) = 0\). The Fréchet derivative of \(\mathcal{F}(\alpha, f)\) at \(f = 0\) is given by the pair  
\begin{align*}
\mathcal{F}_f(\alpha, 0) = \left( \mathcal{F}_{1f}(\alpha, 0), \mathcal{F}_{2f}(\alpha, 0) \right),
\end{align*}  
where the components \(\mathcal{F}_{1f}(\alpha, 0)\) and \(\mathcal{F}_{2f}(\alpha, 0)\) are defined as follows:
\begin{align}\label{congxinbian0820}
\begin{aligned}
&\begin{aligned}
\mathcal{F}_{1f}(\alpha,0)=& (1+H)^2\left(\gamma^2\partial_{qq}
+\partial_{pp}\right)
-2\gamma (1+H)^2\partial_p+\gamma^2(1+H)^2,
\end{aligned}\\ 
&\begin{aligned}
\mathcal{F}_{2f}(\alpha,0)=&2\gamma^2e^{3\gamma}\left(
2\lambda+e^{2\gamma}-2\alpha e^{\gamma}\right)
+\gamma^2e^{4\gamma}\left(
2e^{\gamma}-2\alpha\right)-2p_{0}^2e^{\gamma}\\
&+2\gamma e^{3\gamma}
\left(
2\lambda+e^{2\gamma}-2\alpha e^{\gamma}\right)\partial_p\Big|_{T}\\
=&\Big(2p_{0}^2\gamma^{-1}e^{\gamma}\partial_p-2\gamma^2e^{4\gamma}\left(\alpha
-e^{\gamma}\right)\Big)\Big|_{T}\\
=&2e^{\gamma}\Big(p_{0}^2\gamma^{-1}\partial_p-\beta(\alpha,\gamma) \Big)\Big|_{T}
\end{aligned}
\end{aligned}
\end{align}
where we have used the identities  
\[
2p_0^2 \gamma^{-1} e^{\gamma} + 2\gamma e^{3\gamma} \left(2\alpha e^{\gamma} - 2\lambda - e^{2\gamma}\right) = 0, \quad \beta(\alpha, \gamma) = \gamma^2 e^{3\gamma} \left( \alpha - e^{\gamma} \right).
\]  
The condition \(\sigma_1(\alpha_c) = 0\) implies that the null space of \(\mathcal{F}_f(\alpha, 0)\) at \(\alpha = \alpha_c\) is nontrivial.

Subsequently, we establish two lemmas to prove that the null space of \(\mathcal{F}_f(\alpha_c, 0)\) is one-dimensional and that the codimension of the range \(\mathcal{R}(\mathcal{F}_f(\alpha_c, 0))\) in \(Y\) is also one.

\begin{lemma}\label{yige0620}
For \(\alpha = \alpha_c\), the null space of \(\mathcal{F}_f(\alpha, 0)\) is one-dimensional, and a representative element can be chosen as  
\begin{align}\label{hstar}
h^* = \left( e^{2\gamma p} - e^{-2\gamma} \right) \cos q.
\end{align}
\end{lemma}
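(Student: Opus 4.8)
The plan is to reduce the kernel equation $\mathcal{F}_f(\alpha_c,0)w=0$ to a decoupled family of linear ODE boundary-value problems indexed by the Fourier wavenumber, and then to show that precisely one of them — the mode $k=1$ — carries a nontrivial solution. By \eqref{congxinbian0820} and the definition of $X$ in \eqref{spaces}, an element $w$ of the null space is a smooth function, even and $2\pi$-periodic in $q$, with $w=0$ on $B$, satisfying $(1+H)^2(\gamma^2 w_{qq}+w_{pp})-2\gamma(1+H)^2 w_p+\gamma^2(1+H)^2 w=0$ in $\Omega$ and $p_0^2\gamma^{-1}w_p-\beta(\alpha_c,\gamma)w=0$ on $T$. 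Since $(1+H)^2=e^{2\gamma(p+1)}>0$, dividing the interior equation by it removes the variable coefficient; expanding $w(q,p)=\sum_{k\ge 0}M_k(p)\cos(kq)$ then shows that every $M_k$ solves
\[
M_k''-2\gamma M_k'+\gamma^2(1-k^2)M_k=0 \text{ in } (-1,0),\qquad p_0^2\gamma^{-1}M_k'(0)=\beta(\alpha_c,\gamma)M_k(0),\qquad M_k(-1)=0,
\]
which is exactly the modified eigenvalue problem \eqref{new-equation-per-ode-212} at the eigenvalue $\tilde{\sigma}=0$ and wavenumber $k$.

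The characteristic roots of this ODE are $\gamma(1\pm k)$. For $k=1$ they collapse to $2\gamma$ and $0$, so $M_1(p)=A_1 e^{2\gamma p}+B_1$; imposing $M_1(-1)=0$ forces $B_1=-A_1 e^{-2\gamma}$, and the relation at $p=0$ then holds identically — this is precisely the content of \eqref{critical-alpha-1}, which is how $\alpha_c$ in \eqref{critical-alpha} was defined. Hence the $k=1$ mode contributes the one-parameter family $A_1(e^{2\gamma p}-e^{-2\gamma})\cos q$, i.e. multiples of $h^*$ in \eqref{hstar}. For $k\ge 2$ I would invoke $\tilde{\sigma}_k(\alpha_c)<0$ from \eqref{guji12071}: since $\tilde{\sigma}_k(\alpha_c)$ is the largest eigenvalue of the self-adjoint Sturm--Liouville problem \eqref{new-equation-per-ode-212}, the value $\tilde{\sigma}=0$ lies below the spectrum, so $M_k\equiv 0$. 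Equivalently, eliminating $B_k$ via $M_k(-1)=0$ turns the boundary relation at $p=0$ into $(1+k-c)+e^{-2\gamma k}(k-1+c)=0$ with $c:=2/(1-e^{-2\gamma})$, whose left side a short monotonicity estimate shows never vanishes for integers $k\ge 2$.

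It remains to rule out $k=0$. There the two roots coincide at $\gamma$, giving $M_0(p)=(A_0+B_0 p)e^{\gamma p}$; the condition $M_0(-1)=0$ reduces this to $B_0(1+p)e^{\gamma p}$, and the relation at $p=0$ becomes $B_0\, p_0^2\big(\tfrac{1+\gamma}{\gamma}-\tfrac{2e^{2\gamma}}{e^{2\gamma}-1}\big)=0$. Because $\tfrac{1}{1-e^{-2\gamma}}-\tfrac{1}{2\gamma}>\tfrac12$ for every $\gamma>0$, the bracket is strictly negative, so $B_0=0$ and $M_0\equiv 0$. Collecting the three cases, the null space is spanned by the single function $h^*=(e^{2\gamma p}-e^{-2\gamma})\cos q$, which indeed lies in $X$ (it is $C^\infty$, even and $2\pi$-periodic in $q$, and vanishes on $B$), establishing one-dimensionality.

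The routine parts are the ODE solves and the boundary-condition bookkeeping; the only genuinely delicate point is the non-degeneracy for $k\ne 1$. For $k\ge 2$ the sign information $\tilde{\sigma}_k(\alpha_c)<0$ settles it at once, but for $k=0$ the largest eigenvalue $\tilde{\sigma}_0(\alpha_c)$ is positive, so a sign of the principal eigenvalue does not suffice; it is the explicit computation that the $p=0$ boundary relation collapses to a nonzero multiple of $B_0$ (equivalently, the elementary inequality above) that actually closes the argument.
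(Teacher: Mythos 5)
Your proposal is correct and follows essentially the same route as the paper: expand in a cosine series (justified by evenness in $q$), solve the $k=0$ and $k=1$ modes explicitly — your inequality $\tfrac{1}{1-e^{-2\gamma}}-\tfrac{1}{2\gamma}>\tfrac12$ is equivalent to the paper's condition $(1-\gamma)e^{2\gamma}\neq 1+\gamma$ — and kill the modes $k\ge 2$ by the variational/spectral fact that $\tilde{\sigma}_k(\alpha_c)<0$, which is exactly the contradiction the paper derives from the Rayleigh-quotient characterization. The only nit is the phrase ``$\tilde{\sigma}=0$ lies below the spectrum'' (it lies above, since $\tilde{\sigma}_k(\alpha_c)$ is the largest eigenvalue), but the conclusion that $0$ is not an eigenvalue, hence $M_k\equiv 0$, stands.
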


\begin{proof}
The existence of a nontrivial element in the null space follows from the variational formulation \eqref{eigen-variation}, which guarantees at least one function of the form  
\[
f(q, p) = M(p) \cos(kq),
\]  
where \(M(p)\) is determined by \eqref{eigen-variation}. It remains to prove uniqueness. Let \(\phi(q, p) \in \mathcal{N}(\mathcal{F}_f(\alpha_c, 0))\). Expanding \(\phi\) in a cosine series yields  
\[
\phi(q, p) = \sum_{k=0}^{\infty} \phi_k(p) \cos(kq).
\]  
Substituting into the linearized operator and applying the boundary conditions, we obtain  
\[
\sum_{k=0}^{\infty} \left[ (H+1)^2 \left( \gamma^2 \partial_{qq} + \partial_{pp} \right) - 2\gamma (H+1)^2 \partial_p + \gamma^2 (H+1)^2 \right] \phi_k(p) \cos(kq) = 0,
\]  
and  
\[
\sum_{k=0}^{\infty} 2e^{\gamma} \left( p_0^2 \gamma^{-1} \partial_p - \beta(\alpha_c, \gamma) \right) \phi_k(p) \cos(kq) = 0, \quad p = 0,
\]  
along with the periodic and vanishing boundary conditions at \(p = -1\). This implies that each mode \(\phi_k(p)\) satisfies  
\begin{align}\label{xuyao0620}
\begin{cases}
\left[ (H+1)^{-2} \partial_{pp} - 2\gamma (H+1)^{-2} \partial_p + \gamma^2 (1 - k^2) (H+1)^{-2} \right] \phi_k(p) = 0, \\
\phi_k(-1) = 0, \\
\beta(\alpha_c, \gamma) \phi_k(0) - p_0^2 \gamma^{-1} \phi_k'(0) = 0.
\end{cases}
\end{align}

If for some \(k > 1\) such that \(\phi_k \neq 0\), we obtain  
\[
\begin{aligned}
0 = {} & -\beta(\alpha_c, \gamma) \gamma e^{-2\gamma} (\phi_k(0))^2 / p_0^2 + \int_{-1}^0 (H + 1)^{-2} (\phi_k'(p))^2  dp \\
& + \gamma^2 (k^2 - 1) \int_{-1}^0 (H + 1)^{-2} (\phi_k(p))^2  dp.
\end{aligned}
\]  
Since \(k > 1\), the term \(\gamma^2 (k^2 - 1) \int_{-1}^0 (H + 1)^{-2} (\phi_k(p))^2  dp\) is positive, implying  
\[
0 > -\beta(\alpha_c, \gamma) \gamma e^{-2\gamma} (\phi_k(0))^2 / p_0^2 + \int_{-1}^0 (H + 1)^{-2} (\phi_k'(p))^2  dp.
\]  
This leads to the contradiction  
\begin{align*}
0 = -\tilde{\sigma}_1(\alpha_c) := \inf_{M \in \mathcal{H}} \frac{ 
  -\beta(\alpha_c, \gamma) \gamma e^{-2\gamma} M^2(0) / p_0^2 
  + \int_{-1}^0 (H + 1)^{-2} M_p^2  dp 
}{ \int_{-1}^0 (H + 1)^{-4} M^2  dp } < 0,
\end{align*}  
as the infimum is strictly negative.

For \(k = 0\), the system reduces to  
\begin{align*}
\begin{cases}
\left( \partial_{pp} - 2\gamma \partial_p + \gamma^2 \right) \phi_0(p) = 0, \\
\beta(\alpha_c, \gamma) \phi_0(0) - p_0^2 \gamma^{-1} \phi_0'(0) = 0, \\
\phi_0(-1) = 0,
\end{cases}
\end{align*}  
which has the general solution  
\[
\phi_0(p) = A_1 e^{\gamma p} + A_2 p e^{\gamma p}.
\]  
Applying the boundary conditions at \(p = -1\) and \(p = 0\), we obtain  
\begin{align*}
\begin{cases}
A_1 e^{-\gamma} - A_2 e^{-\gamma} = 0, \\
\beta(\alpha_c, \gamma)  A_1  - p_0^2 \gamma^{-1} \left( \gamma A_1 + A_2 \right) = 0.
\end{cases}
\end{align*}  
Simplifying and using the expression for \(\alpha_c\) from \eqref{critical-alpha} with the fact $(1-\gamma)e^{2\gamma}\neq (1+\gamma)$ as $0<\gamma<1$, 
we find  
\[
A_1 = A_2 = 0 \quad \Rightarrow \quad \phi_0(p) = 0.
\]  

Finally, to determine the element of \(\mathcal{N}(\mathcal{F}_f(\alpha_c, 0))\), we consider \(k = 1\) in \eqref{xuyao0620}. This yields  
\begin{align*}
\phi_1''(p) - 2\gamma \phi_1'(p) = 0, \quad \phi_1(-1) = 0, \quad \beta(\alpha_c, \gamma) \phi_1(0) - \frac{p_0^2}{\gamma} \phi_1'(0) = 0.
\end{align*}  
The general solution is \(\phi_1(p) = C_1 e^{2\gamma p} + C_2\). Applying the boundary conditions at \(p = -1\) and  \(p = 0\) gives
\(C_1 e^{-2\gamma} + C_2 = 0\) and
\(\beta(\alpha_c, \gamma) (C_1 + C_2) - 2p_0^2 C_1 = 0\).  
Using the expression \(\alpha_c = e^{\gamma} + \frac{2p_0^2}{\gamma^2 e^{\gamma} (e^{2\gamma} - 1)}\) and \(\beta(\alpha_c, \gamma) = \gamma^2 e^{3\gamma} (\alpha_c - e^{\gamma})\), one can verify that $
C_1 = 1$ and $C_2 = -e^{-2\gamma}$
satisfy both conditions. Thus, the representative element is $
\phi_1(p) = e^{2\gamma p} - e^{-2\gamma}$.
\end{proof}

\begin{lemma}\label{range0507}
The pair \((u, b)\) belongs to the range of the linear operator \(\mathcal{F}_f(\alpha_c, 0)\) if and only if it satisfies the following orthogonality condition:
\begin{align}\label{orthongonality0507}
\int_{\Omega} u (1 + H)^{-4} h^{*}  dq  dp - \frac{1}{2} \int_{T} (1 + H)^{-3} \frac{\gamma h^{*}}{p_0^2} b  dq = 0,
\end{align}
where \(h^{*}\), given by \eqref{hstar}, is the generator of the null space of \(\mathcal{F}_f(\alpha_c, 0)\).
\end{lemma}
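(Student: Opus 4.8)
The plan is to characterize the range of the Fredholm operator $\mathcal{F}_f(\alpha_c,0)$ by exhibiting the adjoint null space and invoking the Fredholm alternative, which applies because $\mathcal{F}_f(\alpha_c,0)$ is a compact perturbation of an invertible operator (this is implicit in the Crandall–Rabinowitz framework being set up). Concretely, a pair $(u,b)\in Y_1\times Y_2$ lies in $\mathcal{R}(\mathcal{F}_f(\alpha_c,0))$ if and only if it is annihilated by every continuous linear functional vanishing on the range, and since the cokernel is one-dimensional by the companion computation, there is essentially one such functional. So the real content is to \emph{identify} that functional and show it has the stated integral form.

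First I would set up the weak/variational pairing suggested by the symmetric form already used in Subsection \ref{pingfanjie0820}: multiply the interior equation $\mathcal{F}_{1f}(\alpha_c,0)f = u$ by a test function $\psi$ against the weight $(1+H)^{-4}$ and integrate over $\Omega$, and pair the boundary equation $\mathcal{F}_{2f}(\alpha_c,0)f = b$ on $T$ against $\psi$ with the weight $\tfrac{\gamma}{2p_0^2}(1+H)^{-3}$. Using the self-adjoint form
\[
\partial_p\!\left[(H+1)^{-2}f_p\right] + \partial_q\!\left[(H+1)^{-2}\gamma^2 f_q\right] + \gamma^2(H+1)^{-2} f,
\]
integrate by parts twice in $p$ (and once in $q$, where periodicity kills the boundary terms) to move all derivatives onto $\psi$. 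The boundary terms at $p=-1$ vanish because $\psi$ will be chosen in $X$ (so $\psi=0$ on $B$) and because $f\in X$ as well; the boundary terms at $p=0$ combine with the $T$-integral, and here the precise normalization $\tfrac{\gamma}{2p_0^2}(1+H)^{-3}\big|_{p=0}=\tfrac{\gamma e^{-3\gamma}}{2p_0^2}$ is exactly what is needed so that the surviving $p=0$ contribution reproduces the boundary operator $2e^{\gamma}\big(p_0^2\gamma^{-1}\partial_p - \beta(\alpha_c,\gamma)\big)$ acting on $\psi$. The outcome is the identity
\[
\int_\Omega u\,(1+H)^{-4}\psi\,dq\,dp - \frac{1}{2}\int_T (1+H)^{-3}\frac{\gamma\,\psi}{p_0^2}\,b\,dq
= \big\langle \mathcal{F}_f(\alpha_c,0)^{*}\psi,\, f\big\rangle,
\]
valid for all $f\in X$ and all suitable $\psi$. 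Choosing $\psi = h^{*}$, which by Lemma \ref{yige0620} lies in the null space of the (formally self-adjoint, up to the chosen weights) operator, makes the right-hand side vanish; this proves that the orthogonality condition \eqref{orthongonality0507} is \emph{necessary} for $(u,b)$ to be in the range.

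For sufficiency, I would argue by dimension count: $\mathcal{F}_f(\alpha_c,0)$ is Fredholm of index $0$ (again from the compact-perturbation structure), its kernel is one-dimensional by Lemma \ref{yige0620}, hence its cokernel is one-dimensional, so $\mathcal{R}(\mathcal{F}_f(\alpha_c,0))$ is a closed subspace of codimension one. The necessity argument shows $\mathcal{R}(\mathcal{F}_f(\alpha_c,0))$ is contained in the codimension-one hyperplane defined by \eqref{orthongonality0507} — provided that hyperplane is a proper subspace, i.e. the functional is not identically zero, which is clear since $h^{*}\not\equiv 0$. Two closed subspaces of codimension one with one containing the other must coincide, giving sufficiency. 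The main obstacle is the bookkeeping in the double integration by parts: one must track the weights $(1+H)^{-4}$ in $\Omega$ versus $(1+H)^{-3}$ on $T$ and verify that the $p=0$ boundary terms assemble into precisely $2e^{\gamma}(p_0^2\gamma^{-1}\partial_p - \beta)h^{*}$ with the correct constant $-\tfrac12\gamma/p_0^2$, and then confirm that this combination vanishes because $h^{*}$ solves \eqref{xuyao0620} with $k=1$ — a short but error-prone computation using $\beta(\alpha_c,\gamma)=\gamma^2 e^{3\gamma}(\alpha_c-e^{\gamma})$ and the explicit $\alpha_c$ from \eqref{critical-alpha}.
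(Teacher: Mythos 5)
Your necessity argument coincides with the paper's: both pair the interior equation against $h^{*}$ with weight $(1+H)^{-4}$, integrate by parts, and use the boundary relation $p_0^2 h^{*}_p=\beta(\alpha_c,\gamma)\gamma h^{*}$ on $T$ to assemble the surviving $p=0$ terms into the stated multiple of $b$. For sufficiency, however, you take a genuinely different route. The paper does \emph{not} argue by dimension count; it constructs a preimage directly, by solving the $\varepsilon$-regularized problem \eqref{fuzhu0507}, proving a uniform $C^{1+s}_{\mathrm{per}}$ bound on $\{v^{(\varepsilon)}\}$ by a contradiction argument in which the orthogonality condition \eqref{orthongonality0507} is used to kill the only possible blow-up mode (a multiple of $h^{*}$), and then passing to the limit with Schauder estimates. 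Your Fredholm-alternative argument is shorter and cleaner, but it hinges on knowing \emph{a priori} that $\mathcal{F}_f(\alpha_c,0):X\to Y$ is Fredholm of index zero, and you should be aware that in this paper's logical architecture that fact is \emph{deduced from} the present lemma: the proof of \autoref{lemma32} establishes finite-dimensional kernel and closed range by Schauder estimates, but then cites the codimension-one range from \autoref{range0507} to pin the index at zero before propagating it by continuity. So invoking index zero here, justified only by the phrase ``compact perturbation of an invertible operator,'' either risks circularity or requires you to supply the missing input independently — for instance via the resolvent estimate of \autoref{lemma33}, which shows $(\mathcal{P}-\lambda I,\mathcal{Q})$ is an isomorphism for large real $\lambda$ and hence that the unperturbed pair is Fredholm of index zero by homotopy of the index. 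With that input made explicit, your dimension-count closes the sufficiency direction correctly (closed range from the Fredholm property, containment in a proper closed hyperplane, equality of two closed codimension-one subspaces); what the paper's constructive route buys in exchange for its length is that it needs no such a priori index information and simultaneously furnishes the regularity $v\in X$ of the preimage.
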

\begin{proof}
For convenience, we firstly rewrite the linear operator \(\mathcal{F}_{f}\left(\alpha_{c},0\right)=\left(\mathcal{F}_{1f}\left(\alpha_{c},0\right),\mathcal{F}_{2f}\left(\alpha_{c},0\right)\right)\) in the following form 
\begin{align}\label{self0507}
\begin{aligned}
&\mathcal{F}_{1f}\left(\alpha_{c},0\right)h=\left(1+H\right)^3\left[\left(1+H\right)^{-1}h\right]_{pp}+\gamma^2\left(1+H\right)^2 h_{qq},
\\
&\mathcal{F}_{2f}\left(\alpha_{c},0\right)h=2e^{\gamma}\left[\frac{p_{0}^2}{\gamma}h_{p}-\beta\left(\alpha_{c},\gamma\right)h\right]\bigg{|}_{T}, ~\text{for~}h\in X.
\end{aligned}
\end{align}

"\(\Longrightarrow\)" Assume that the pair \((u, b)\) belongs to the range of 
\(\mathcal{F}_f(\alpha_c, 0)\). Then there exists a function \(v\) vanishing on \(B\) and periodic in \(q\) such that  
\begin{align*}
\begin{aligned}
&(1 + H)^3 \left[ (1 + H)^{-1} v \right]_{pp} + \gamma^2 (1 + H)^2 v_{qq} = u, \quad \text{in } \Omega, \\
&2e^{\gamma} \left[ \frac{p_0^2}{\gamma} v_p - \beta(\alpha_c, \gamma) v \right] \bigg|_{T} = b, \quad \text{on } T.
\end{aligned}
\end{align*}  
A direct computation using integration by parts and the boundary conditions yields: 
\begin{align*}
\begin{aligned}
\int_{\Omega}u\left(1+H\right)^{-4}h^{*}dqdp&=
\int_{\Omega}\left(1+H\right)^{-1}v\left\{\left[\left(1+H\right)^{-1}h^{*}\right]_{pp}+\gamma^2\left(1+H\right)^{-1}h_{qq}^{*}\right\}dqdp
\\
&\quad+\int_{T}\left[\left(1+H\right)^{-1}v\right]_{p}\left(1+H\right)^{-1}h^{*}dq\\&\quad-\int_{T}
\left(1+H\right)^{-1}v\left[\left(1+H\right)^{-1}h^{*}\right]_{p}dq
\\
&=\int_{T}\left[\left(1+H\right)^{-1}v\right]_{p}\left(1+H\right)^{-1}h^{*}dq\\&\quad-\int_{T}
\left(1+H\right)^{-1}v\left[\left(1+H\right)^{-1}h^{*}\right]_{p}dq
\end{aligned}
\end{align*}
where we have used the fact that \(h^{*}\) spans the one-dimensional null space of \(\mathcal{F}_{f}(\alpha_{c}, 0)\). Furthermore, on the boundary \(T\), the following identities hold:
\begin{align*}
\begin{aligned}
&p_0^2 h^{*}_p = \beta(\alpha_c, \gamma) \gamma h^{*}, \\
&\left[ (1 + H)^{-1} h^{*} \right]_p = (1 + H)^{-1} \gamma h^{*} \left( \frac{\beta(\alpha_c, \gamma)}{p_0^2} - 1 \right), \\
&\left[ (1 + H)^{-1} v \right]_p = (1 + H)^{-1} v_p - \gamma (1 + H)^{-1} v.
\end{aligned}
\end{align*}
Consequently, we obtain
\begin{align*}
\begin{aligned}
\int_{\Omega}u\left(1+H\right)^{-4}h^{*}dqdp-\frac{1}{2}\int_{T}\left(1+H\right)^{-3}\frac{\gamma h^{*}}{p_{0}^2}bdq=0.
\end{aligned}
\end{align*}

"\(\Longleftarrow\)"Suppose that the orthogonality condition \eqref{orthongonality0507} is satisfied. We consider the boundary value problem  
\begin{align}\label{given0507}
\begin{aligned}
&(1 + H)^3 \left[ (1 + H)^{-1} v \right]_{pp} + \gamma^2 (1 + H)^2 v_{qq} = u, \quad \text{in } \Omega, \\
&2e^{\gamma} \left[ \frac{p_0^2}{\gamma} v_p - \beta(\alpha_c, \gamma) v \right] \bigg|_{T} = b, \quad \text{on } T,
\end{aligned}
\end{align}  
where \((u, b) \in Y\) is given. To leverage elliptic theory, we introduce the transformation  
\[
\widetilde{v} := (1 + H)^{-1} v, \quad \text{that is,} \quad v = (1 + H) \widetilde{v}.
\]  
Substituting this into \eqref{given0507}, we obtain the equivalent system  
\begin{align*}
\begin{aligned}
&(1 + H)^3 \widetilde{v}_{pp} + \gamma^2 (1 + H)^3 \widetilde{v}_{qq} = u, \quad \text{in } \Omega, \\
&2e^{2\gamma} \left[ \frac{p_0^2}{\gamma} \widetilde{v}_p - \left( p_0^2 - \beta(\alpha_c, \gamma) \right) \widetilde{v} \right] \bigg|_{T} = b, \quad \text{on } T.
\end{aligned}
\end{align*}

Furthermore, for any given \(\varepsilon > 0\), we introduce the following regularized auxiliary problem:
\begin{align}\label{fuzhu0507}
\begin{aligned}
&(1 + H)^3 \widetilde{v}^{(\varepsilon)}_{pp} + \gamma^2 (1 + H)^3 \widetilde{v}^{(\varepsilon)}_{qq} - \varepsilon \widetilde{v}^{(\varepsilon)} = u, \quad \text{in } \Omega, \\
&2e^{2\gamma} \left[ \frac{p_0^2}{\gamma} \widetilde{v}^{(\varepsilon)}_p - \left( p_0^2 - \beta(\alpha_c, \gamma) \right) \widetilde{v}^{(\varepsilon)} \right] \bigg|_{T} = b, \quad \text{on } T.
\end{aligned}
\end{align}
By standard elliptic theory \cite{Gilbarg2001}, the regularized problem \eqref{fuzhu0507} admits a unique solution \(\widetilde{v}^{(\varepsilon)}\). Equivalently, the following equation has a unique solution \(v^{(\varepsilon)}\):
\begin{align}\label{fuzhu10507}
\begin{aligned}
&\begin{aligned}
&\left(1+H\right)^3\left[\left(1+H\right)^{-1}v^{\left(\varepsilon\right)}\right]_{pp}+\gamma^2\left(1+H\right)^2\left[\left(1+H\right)^{-1}v^{\left(\varepsilon\right)}\right]_{qq}\\&-\varepsilon\left[\left(1+H\right)^{-1}v^{\left(\varepsilon\right)}\right]=u,~\text{in}~\Omega,
\end{aligned}
\\
&\begin{aligned}
&2e^{\gamma}\left[\frac{p_{0}^2}{\gamma}v^{\left(\varepsilon\right)}_{p}-\beta\left(\alpha_{c},\gamma\right)v^{\left(\varepsilon\right)}\right]\bigg{|}_{T}=b,~~~\text{on~}~\text{T}.
\end{aligned}
\end{aligned}
\end{align}
We claim that the family \(\left\{ v^{(\varepsilon)} \right\}\) is bounded in \(C_{\text{per}}^{1+s}(\overline{\Omega})\). Suppose, for contradiction, that  
\[
\left\| v^{(\varepsilon)} \right\|_{C_{\text{per}}^{1+s}(\overline{\Omega})} \to +\infty \quad \text{as} \quad \varepsilon \to 0^+.
\]  
Define the normalized function  
\[
u^{(\varepsilon)} = \frac{v^{(\varepsilon)}}{\left\| v^{(\varepsilon)} \right\|_{C_{\text{per}}^{1+s}(\overline{\Omega})}}.
\]  
Then, dividing \eqref{fuzhu10507} by \(\left\| v^{(\varepsilon)} \right\|_{C_{\text{per}}^{1+s}(\overline{\Omega})}\) and taking the limit as \(\varepsilon \to 0^+\), we obtain
\begin{align*}
\begin{aligned}
&\left(1+H\right)^3\left[\left(1+H\right)^{-1}u^{\left(\varepsilon\right)}\right]_{pp}+\gamma^2\left(1+H\right)^2\left[\left(1+H\right)^{-1}u^{\left(\varepsilon\right)}\right]_{qq}\rightarrow 0,~\text{in}~C_{\text{per}}^{1+s}\left(\overline{\Omega}\right),
\\
&2e^{\gamma}\left[\frac{p_{0}^2}{\gamma}u^{\left(\varepsilon\right)}_{p}-\beta\left(\alpha_{c},\gamma\right)u^{\left(\varepsilon\right)}\right]\bigg{|}_{T}\rightarrow 0,~~\text{in~}C_{\text{per}}^{1+s}\left(T\right).
\end{aligned}
\end{align*}
which, combined with the Schauder estimates \cite{Gilbarg2001}, implies that the family \(\left\{ u^{(\varepsilon)} \right\}\) is bounded in \(C_{\text{per}}^{2+s}(\overline{\Omega})\). By the compact embedding of Hölder spaces, we may extract a subsequence of \(\left\{ u^{(\varepsilon)} \right\}\) (still denoted by \(\left\{ u^{(\varepsilon)} \right\}\)) that converges to some function \(u \in C_{\text{per}}^{2}(\overline{\Omega})\). Consequently, \(u\) satisfies the limiting equation:
\begin{align*}
\begin{aligned}
&\left(1+H\right)^3\left[\left(1+H\right)^{-1}u\right]_{pp}+\gamma^2\left(1+H\right)^2\left[\left(1+H\right)^{-1}u\right]_{qq}= 0,~\text{in}~C_{\text{per}}^{1+s}\left(\overline{\Omega}\right),
\\
&2e^{\gamma}\left[\frac{p_{0}^2}{\gamma}u_{p}-\beta\left(\alpha_{c},\gamma\right)u\right]\bigg{|}_{T}=0,~~\text{in~}C_{\text{per}}^{1+s}\left(T\right),
\end{aligned}
\end{align*}
which implies that \(u\) belongs to the null space of \(\mathcal{F}_f(\alpha_c, 0)\). Therefore, \(u\) must be a scalar multiple of \(h^*\). However, if we multiply \eqref{fuzhu10507} by $(1 + H)^{-4} h^*$ and integrate over $\Omega$, we obtain
\[
\int_{\Omega}\left(1+H\right)^{-5}v^{\left(\varepsilon\right)}h^{*}dqdp=0,
\]
where the orthogonality condition \eqref{orthongonality0507} has been applied. It follows that  
\[
\int_{\Omega} (1 + H)^{-5} u h^{*}  dq  dp = 0.
\]  
Since \(u\) is a scalar multiple of \(h^{*}\), this implies \(u = 0\). However, this contradicts the fact that  
\[
\| u \|_{C_{\text{per}}^{1+s}(\overline{\Omega})} = 1.
\]  
This contradiction establishes the claim that the family \(\left\{ v^{(\varepsilon)} \right\}\) is bounded in \(C_{\text{per}}^{1+s}(\overline{\Omega})\).

This boundedness ensures the existence of a strongly convergent subsequence of \(\left\{ v^{(\varepsilon)} \right\}\) that converges to a limit \(v\) in \(C_{\text{per}}^{1}(\overline{\Omega})\). Passing to the limit as \(\varepsilon \to 0^+\) in \eqref{fuzhu10507}, we obtain  
\begin{align*}
\begin{aligned}
&(1 + H)^3 \left[ (1 + H)^{-1} v \right]_{pp} + \gamma^2 (1 + H)^2 \left[ (1 + H)^{-1} v \right]_{qq} = u, \quad \text{in } \Omega, \\
&2e^{\gamma} \left[ \frac{p_0^2}{\gamma} v_p - \beta(\alpha_c, \gamma) v \right] \bigg|_{T} = b, \quad \text{on } T,
\end{aligned}
\end{align*}  
in the sense of distributions. By elliptic regularity theory \cite{Gilbarg2001}, it follows that \(v \in X\). This demonstrates that the pair $(u, b)$ belongs to the range of $\mathcal{F}_f(\alpha_c, 0)$.

\end{proof}

\begin{remark}\label{erge0620}
\autoref{range0507} implies that the codimension of \(\mathcal{R}(\mathcal{F}_f(\alpha_c, 0))\) in \(Y\) is one. For any two elements \((u_1, b_1), (u_2, b_2) \in Y\), we define the inner product  
\[
\langle (u_1, b_1), (u_2, b_2) \rangle := \int_{\Omega} u_1 u_2  dq  dp + \int_{T} b_1 b_2  dq.
\]  
Then, by \autoref{range0507}, a representative element of \(Y \setminus \mathcal{R}(\mathcal{F}_f(\alpha_c, 0))\) can be chosen as  
\[
\mathbf{h}_0 = \left( (1 + H)^{-4} h^*, -\frac{(1 + H)^{-3} \gamma h^*}{2p_0^2} \right) / C_0,
\]  
where the normalization constant \(C_0\) is given by  
\[
C_0 = \sqrt{\pi \left[ \frac{e^{-12 \gamma } \left(e^{2 \gamma }-1\right)^3 \left(e^{2 \gamma }+3\right)}{24 \gamma }+ \frac{\gamma^2 (e^{-3\gamma} - e^{-5\gamma})^2}{4p_0^4} \right]},
\]  
chosen to ensure \(\langle \mathbf{h}_0, \mathbf{h}_0 \rangle = 1\).
\end{remark}
\subsection{Proof of \autoref{dingli08201} }\label{433}

\begin{proof}
We now apply \autoref{the21} to establish our local bifurcation theorem, \autoref{dingli08201}. Through standard computations derived from \eqref{congxinbian0820}, we obtain  
\begin{align*}
\mathcal{F}_{f\alpha}(\alpha_c, 0) = \left( 0, -2\gamma^2 e^{4\gamma} \big|_{T} \right).
\end{align*}  
Hence, to complete the proof of parts (1) and (2) of \autoref{dingli08201}, it suffices to verify that  
\[
\mathcal{F}_{f\alpha}(\alpha_c, 0) h^*(q, p) \notin \mathcal{R}\left( \mathcal{F}_f(\alpha_c, 0) \right),
\]  
where \(h^*(q, p)\) is the generator of the null space as defined in \eqref{hstar}.

Suppose, for contradiction, that  
\[
\mathcal{F}_{f\alpha}(\alpha_c, 0) h^*(q, p) = \mathcal{F}_f(\alpha_c, 0) h
\]  
for some \(h \in X\). It then follows that  
\[
\mathcal{F}_f(\alpha_c, 0) h = \left( 0, -2\gamma^2 e^{4\gamma} B (e^{2\gamma} - 1) \cos q \right),
\]  
which implies the following system:  
\[
\begin{cases}
\left[ (1 + H)^2 \left( \gamma^2 \partial_{qq} + \partial_{pp} \right) - 2\gamma (1 + H)^2 \partial_p + \gamma^2 (1 + H)^2 \right] h = 0 \quad \text{in } \Omega,\\
2e^{\gamma} \left( p_0^2 \gamma^{-1} \partial_p - \beta(\alpha_c, \gamma) \right) h \Big|_{T} = -2\gamma^2 e^{4\gamma} B (e^{2\gamma} - 1) \cos q \quad \text{on } T.
\end{cases}
\]

Expanding \(h(q, p)\) in a Fourier cosine series, we express  
\[
h(q, p) = \sum_{k=0}^{\infty} H_k(p) \cos(kq).
\]  
Substituting this expansion into the governing equations, we derive  
\[
\begin{cases}
\sum_{k=0}^{\infty} \left[ (H + 1)^2 \left( \gamma^2 \partial_{qq} + \partial_{pp} \right) - 2\gamma (H + 1)^2 \partial_p + \gamma^2 (H + 1)^2 \right] H_k(p) \cos(kq) = 0,\\
\sum_{k=0}^{\infty} 2e^{\gamma} \left[ p_0^2 \gamma^{-1} \partial_p - \beta(\alpha_c, \gamma) \right] H_k(p) \cos(kq) = -2\gamma^2 e^{4\gamma} B (e^{2\gamma} - 1) \cos q, \quad p = 0,
\end{cases}
\]
subject to periodic boundary conditions in \(q\) and the Dirichlet condition \(h = 0\) at \(p = -1\). This decomposition implies that for \(k \neq 1\),  
\begin{align}\label{jieyouci0915}
\begin{cases}
\left[ (H + 1)^{-2} \partial_{pp} - 2\gamma (H + 1)^{-2} \partial_p + \gamma^2 (1 - k^2) (H + 1)^{-2} \right] H_k(p) = 0, \\
\beta(\alpha_c, \gamma) H_k(0) - p_0^2 \gamma^{-1} H_k'(0) = 0, \\
H_k(-1) = 0,
\end{cases}
\end{align}  
which has a general solution 
\begin{align*}
H_{k}(p)=C_{k1}e^{\gamma(1-k)p}+C_{k2}e^{\gamma(k+1)p}.
\end{align*}
Then from $\eqref{jieyouci0915}_{2}$ and $\eqref{jieyouci0915}_{3}$, we have 
\begin{align*}
\begin{aligned}
&C_{k1}e^{\gamma(k-1)}+C_{k2}e^{-\gamma(1+k)}=0,
\\
&\left(\frac{2e^{2\gamma}}{e^{2\gamma}-1}+(k-1)\right)C_{k1}+\left(\frac{2e^{2\gamma}}{e^{2\gamma}-1}-(1+k)\right)C_{k2}=0,
\end{aligned}
\end{align*}
which has a trivial solution if and only if 
\begin{align*}
    \begin{vmatrix}
        e^{\gamma(k-1)}  &e^{-\gamma(k+1)}
        \\
        \frac{2e^{2\gamma}}{e^{2\gamma}-1}+(k-1) &\frac{2e^{2\gamma}}{e^{2\gamma}-1}-(1+k)
    \end{vmatrix}\neq 0.
\end{align*}
That is,
\begin{align*}
    \left(e^{2k\gamma}-e^{2\gamma}\right)(1+k)\neq \left(e^{2(1+k)\gamma}-1\right)(k-1),
\end{align*}
which can be verified by taking several derivatives of  $\xi(r)= \left(e^{2k\gamma}-e^{2\gamma}\right)(1+k)-\left(e^{2(1+k)\gamma}-1\right)(k-1)$ with respect to $r$.

for \(k = 1\),  
\begin{align}\label{22}
\begin{cases}
\left[ (H + 1)^{-2} \partial_{pp} - 2\gamma (H + 1)^{-2} \partial_p \right] H_1(p) = 0, \quad H_1(-1) = 0, \\
\beta(\alpha_c, \gamma) H_1(0) - p_0^2 \gamma^{-1} H_1'(0) = -\gamma^2 e^{3\gamma} B (e^{2\gamma} - 1),
\end{cases}
\end{align}  
and for \(k = 0\),  
\begin{align*}
\begin{cases}
\left( \partial_{pp} - 2\gamma \partial_p + \gamma^2 \right) H_0(p) = 0, \quad H_0(-1) = 0, \\
\beta(\alpha_c, \gamma) H_0(0) - p_0^2 \gamma^{-1} H_0'(0) = 0,
\end{cases}
\end{align*}
which has a trivial solution as $0<\gamma<1$.

For all \(k \neq 1\), the only solution satisfying the required boundary conditions is the trivial solution:  
\[
H_k(p) = 0.
\]

Note that the system \eqref{22} is equivalent to  
\begin{align}\label{new-equation-per-ode-222}
\begin{cases}
\partial_p \left[ (H + 1)^{-2} \partial_p H_1 \right] = 0, \\
\beta(\alpha_c, \gamma) H_1(0) - p_0^2 \gamma^{-1} H_1'(0) = -\gamma^2 e^{3\gamma} B (e^{2\gamma} - 1), \\
H_1(-1) = 0.
\end{cases}
\end{align}  
The general solution of the differential equation \(\partial_p \left[ (1 + H)^{-2} \partial_p H_1 \right] = 0\) is  
\[
H_1(p) = B C_1 e^{2\gamma(p + 1)} + B C_2,
\]  
where \(C_1\) and \(C_2\) are constants. From this, it follows that the boundary value problem \eqref{new-equation-per-ode-222} admits a nontrivial solution if and only if the determinant condition  
\[
\begin{vmatrix}
1 & 1 \\
e^{2\gamma} \beta(\alpha_c, \gamma) - 2p_0^2 e^{2\gamma} & \beta(\alpha_c, \gamma)
\end{vmatrix} \neq 0
\]  
is satisfied. However, this contradicts the critical condition \eqref{critical-alpha-1}, which requires precisely that this determinant vanishes. Therefore, we conclude that  
\[
\mathcal{F}_{f\alpha}(\alpha_c, 0) h^*(q, p) \notin \mathcal{R}\left( \mathcal{F}_f(\alpha_c, 0) \right).
\]

We now proceed to prove part (3) of \autoref{dingli08201}, which concerns the type of bifurcation. According to Corollary I.5.2 in \cite{Kielhoefer2012}, the tangent vector to the bifurcation curve at the critical point is given by \((\dot{\lambda}(0), \dot{x}(0)) = (\mathcal{X}, h^*)\), where \(h^*\) generates the null space of \(\mathcal{F}_f(\alpha_c, 0)\). If \(\mathcal{X} = 0\), the bifurcation is not transcritical.  

To analyze the bifurcation type, we decompose the spaces \(X\) and \(Y\) as follows:
\begin{align}\label{fenjie0620}
X = \mathcal{N}\left( \mathcal{F}_f(\alpha_c, 0) \right) \oplus X_0, \quad Y = \mathcal{R}\left( \mathcal{F}_f(\alpha_c, 0) \right) \oplus Y_0.
\end{align}
From \autoref{yige0620} and  \autoref{erge0620}, we have  
\[
\mathcal{N}\left( \mathcal{F}_f(\alpha_c, 0) \right) = \text{span} \left\{ \hat{h}^* \right\}, \quad Y_0 = \text{span} \left\{ \mathbf{h}_0 \right\}, \quad \text{dim} \, \mathcal{N}\left( \mathcal{F}_f(\alpha_c, 0) \right) = \text{dim} \, Y_0 = 1,
\]  
where \(\hat{h}^* = h^* / \hat{C}\) with normalization constant  
\[
\hat{C} = \sqrt{ \pi \left[ \frac{1}{4\gamma} + \frac{3 + 4\gamma}{4\gamma e^{4\gamma}} - \frac{1}{\gamma e^{2\gamma}} \right] },
\]  
chosen such that \(\int_{\Omega} (\hat{h}^*)^2  dq  dp = 1\).

These decompositions, in turn, define projections
\begin{align}\label{suanzi0624}
    \begin{aligned}
    &\mathbf{P}:~X\rightarrow \mathcal{N}\left(\mathcal{F}_{f}\left(\alpha_{c},0\right)\right)~\text{along}~X_{0},
    \\
    &\mathbf{Q}:~Y\rightarrow Y_{0},~\text{along}~\mathcal{R}\left(\mathcal{F}_{f}\left(\alpha_{c},0\right)\right).
    \end{aligned}
\end{align}

Employing the formula I.6.3 in \cite{Kielhoefer2012}, we have
\begin{align}\label{kongzhi0820}
\dot{\lambda}\left(0\right)=\mathcal{X}=-\frac{1}{2}
\frac{\langle\mathcal{F}_{ff}\left(\alpha_{c},0\right)\left[\hat{h}^{*},\hat{h}^{*}\right],\mathbf{h}_{0} \rangle}{\langle \mathcal{F}_{\alpha f}\left(\alpha_{c},0\right)\hat{h}^{*}, \mathbf{h}_{0} \rangle}.
\end{align}
Through a series of computation, we obtain 
\begin{align*}
\begin{aligned}
&\begin{aligned}
\mathcal{F}_{1ff}\left(\alpha_{c},0\right)\left[\hat{h}^{*},\hat{h}^{*}\right]=\frac{2\gamma^2 e^{(p-3)\gamma}\left(1-2e^{2\left(p+1\right)\gamma} \cos{2q}-3e^{4\left(p+1\right)\gamma}\right)}{\hat{C}^2},
\end{aligned}
\\
&\begin{aligned}
\mathcal{F}_{2ff}\left(\alpha_{c},0\right)\left[\hat{h}^{*},\hat{h}^{*}\right]&=\frac{z_{1}\left(\gamma\right)+z_{2}\left(\gamma\right)\cos{2 q}}{\hat{C}^2 },
\end{aligned}
\\
&\begin{aligned}
   & \mathcal{F}_{\alpha f}\left(\alpha_{c},0\right)\hat{h}^{*}=\left(0,-2\gamma^2 e^{2\gamma}\left(e^{2\gamma}-1\right)\cos{q}/\hat{C}\right),\\&
    \langle\mathcal{F}_{\alpha f}\left(\alpha_{c},0\right)\hat{h}^{*}, \mathbf{h}_{0}\rangle=\frac{\gamma^3 \pi \left(e^{2\gamma}-1\right)^2}{\hat{C} C_{0} e^{3\gamma} p_{0}^2 },
\end{aligned}
\end{aligned}
\end{align*}
where $z_{j}\left(\gamma\right)$ with $j=1,2$ are given by
\begin{align*}
&z_{1}\left(\gamma\right)=\gamma ^2-2 \gamma ^2 e^{2 \gamma }+\gamma ^2 e^{4 \gamma }-p_{0}^2 e^{-4 \gamma }+2 p_{0}^2 e^{-2 \gamma }-13 p_{0}^2,
\\
&z_{2}\left(\gamma\right)=\gamma ^2-2 \gamma ^2 e^{2 \gamma }+\gamma ^2 e^{4 \gamma }+p_{0}^2 e^{-4 \gamma }-2 p_{0}^2 e^{-2 \gamma }-11 p_{0}^2.
\end{align*}
Thus, we can obtain $\mathcal{X}=0$. That is, the bifurcation is not transcritical. 

Since \(\mathcal{X}=0\), it follows from formula I.6.11 in  \cite{Kielhoefer2012} that 
the bifurcation type is determined by computing a parameter \(\mathcal{O} \), defined as:\begin{align*}
\mathcal{O} = \mathcal{O}_1 + \mathcal{O}_2,
\end{align*}  
where \(\mathcal{O}_1\) and \(\mathcal{O}_2\) are given by  
\begin{align*}
\begin{aligned}
&\mathcal{O}_1 = -\frac{1}{3} \frac{ \langle \mathcal{F}_{fff}(\alpha_c, 0)[\hat{h}^*, \hat{h}^*, \hat{h}^*], \mathbf{h}_0 \rangle }{ \langle \mathcal{F}_{\alpha f}(\alpha_c, 0) \hat{h}^*, \mathbf{h}_0 \rangle }, \\
&\mathcal{O}_2 = \frac{ \langle \mathcal{F}_{ff}(\alpha_c, 0) \left[ \hat{h}^*, (\mathbf{I} - \mathbf{P}) \left( \mathcal{F}_f(\alpha_c, 0) \right)^{-1} (\mathbf{I} - \mathbf{Q}) \mathcal{F}_{ff}(\alpha_c, 0)[\hat{h}^*, \hat{h}^*] \right], \mathbf{h}_0 \rangle }{ \langle \mathcal{F}_{\alpha f}(\alpha_c, 0) \hat{h}^*, \mathbf{h}_0 \rangle }.
\end{aligned}
\end{align*}  
According to \cite{Kielhoefer2012}, if \(\mathcal{O} > 0\), the bifurcation is supercritical and subcritical if \(\mathcal{O} < 0\). In both cases, the bifurcation diagram exhibits a pitchfork structure.

In order to obtain $\mathcal{O}$, we need to compute \(\mathcal{O}_1 \) and \(\mathcal{O}_1 \). For \(\mathcal{O}_1 \), a direct computation gives that
\[
\begin{aligned}
\mathcal{O}_{1}=&\frac{1}{\hat{C}^2 \left(e^{2 \gamma }-1\right)^2}\bigg{(}
\frac{9 e^{3 \gamma }}{2}-15 e^{\gamma }-\frac{4 p_{0}^2 e^{-7 \gamma }}{\gamma ^2}+e^{-5 \gamma } \left(\frac{3}{2}-\frac{12 p_{0}^2}{\gamma ^2}-\frac{8 p_{0}^2}{\gamma }\right)
\\
&+e^{-3 \gamma } \left(\frac{44 p_{0}^2}{\gamma ^2}-9\right)+e^{-\gamma } \left(18-\frac{28 p_{0}^2}{\gamma ^2}\right)
\bigg{)}.
\end{aligned}
\]

For the term \(\mathcal{O}_2 \), it is necessary to compute the expression
\[
(\mathbf{I}-\mathbf{P})\left(\mathcal{F}_{f}\left(\alpha_{c},0\right)\right)^{-1}\left(\mathbf{I}-\mathbf{Q}\right)\mathcal{F}_{ff}\left(\alpha_{c},0\right)\left[\hat{h}^{*},\hat{h}^{*}\right].
\]
Since \(\mathcal{F}_{ff}\left(\alpha_{c},0\right)\left[\hat{h}^{*},\hat{h}^{*}\right]\in \mathcal{R}\left(\mathcal{F}_{f}\left(\alpha_{c},0\right)\right)\), we first conclude that 
\[
\left(\mathbf{I}-\mathbf{Q}\right)\mathcal{F}_{ff}\left(\alpha_{c},0\right)\left[\hat{h}^{*},\hat{h}^{*}\right]=\mathcal{F}_{ff}\left(\alpha_{c},0\right)\left[\hat{h}^{*},\hat{h}^{*}\right].
\]
Hence, the computation of  \(\mathcal{O}_2\)
reduces to finding a function $h$ that satisfies the equation
\[
\mathcal{F}_{f}\left(\alpha_{c},0\right)h=\mathcal{F}_{ff}\left(\alpha_{c},0\right)\left[\hat{h}^{*},\hat{h}^{*}\right].
\]
 That is, we will solve the following partial differential equations,
\begin{align}\label{buweiyi0713}
\begin{cases}
\begin{aligned}
&\left(1+H\right)^3\left[(1+H)^{-1}h\right]_{pp}+\gamma^2\left(1+H\right)^2 h_{qq}\\&=\frac{2\gamma^2 e^{(p-3)\gamma}\left(1-2e^{2\left(p+1\right)\gamma} \cos{2q}-3e^{4\left(p+1\right)\gamma}\right)}{\hat{C}^2},
\end{aligned}
\\
\begin{aligned}
2e^{\gamma}\left[\frac{p_{0}^2}{\gamma}h_{p}-\beta\left(\alpha_{c},\gamma\right)h\right]\Big|_{p=0}
&=\frac{z_{1}\left(\gamma\right)+z_{2}\left(\gamma\right)\cos{2 q}}{\hat{C}^2 }.
\end{aligned}
\end{cases}
\end{align}

We observe that the solution to \eqref{buweiyi0713} is not unique; however, this non-uniqueness does not affect the expression of \(\mathcal{O}\). Indeed, if \(h_1\) and \(h_2\) are two solutions of \eqref{buweiyi0713}, then \(h_1 - h_2 \in \mathcal{N}(\mathcal{F}_f(\alpha_c, 0))\), which implies \((\mathbf{I} - \mathbf{P})(h_1 - h_2) = 0\).  
To solve \eqref{buweiyi0713}, we first consider the equation:  
\[
(1 + H)^3 \left[ (1 + H)^{-1} h \right]_{pp} + \gamma^2 (1 + H)^2 h_{qq} = 0,
\]  
which can be solved using separation of variables. We then apply the method of undetermined coefficients to obtain a particular solution of \eqref{buweiyi0713}. Thus, we derive a solution \(h\) in the form:  
\begin{align}\label{yigejie0703}
h = \frac{h_1(p, q) + h_2(p, q)}{\hat{C}^2},
\end{align}  
where  $h_1(p, q) $ and $h_2(p, q)$ are given by
\begin{align*}
h_1(p, q) &= -\frac{3}{2} e^{\gamma (3p - 1)} + \frac{1}{2} e^{-\gamma (p + 5)} + e^{\gamma (p - 3)} \cos(2q), \\
h_2(p, q) &= 4\gamma p e^{\gamma (p - 3)} - 4 e^{\gamma (p - 3)} + 4\gamma p e^{\gamma (p - 1)} + e^{\gamma (p - 1)} + 4\gamma p e^{\gamma (p + 1)} + 4 e^{\gamma (p + 1)} \\
&\quad + \frac{\gamma^3 p e^{\gamma (p - 1)}}{p_0^2} - \frac{\gamma^3 p e^{\gamma (p + 1)}}{2p_0^2} - \frac{\gamma^3 p e^{\gamma (p + 5)}}{2p_0^2} - \frac{\gamma^2 e^{\gamma (p - 1)}}{2p_0^2} + \frac{\gamma^2 e^{\gamma (p + 1)}}{2p_0^2} \\
&\quad + \frac{\gamma^2 e^{\gamma (p + 3)}}{2p_0^2} - \frac{\gamma^2 e^{\gamma (p + 5)}}{2p_0^2} + \bigg( \frac{11}{6} e^{-\gamma (p + 1)} - \frac{11}{9} e^{-\gamma (p + 3)} \\
&\quad - \frac{4}{3} e^{-\gamma (p + 5)} - \frac{5}{18} e^{3\gamma \left(p - \frac{5}{3}\right)} - \frac{\gamma^2 e^{-\gamma (p + 1)}}{2p_0^2} + \frac{\gamma^2 e^{3\gamma (p + 1)}}{2p_0^2} \bigg) \cos(2q).
\end{align*}

By a direct computation, one can conclude that $\frac{h_{1}\left(p,q\right)}{\hat{C}^2}$ satisfies the equation $\eqref{buweiyi0713}_{1}$. And $\frac{h_{2}\left(p,q\right)}{\hat{C}^2}$ solves the $\eqref{buweiyi0713}_{1}$ as the right side of $\eqref{buweiyi0713}_{1}$ is zero.  Subsequently, substitute \eqref{yigejie0703} into the following expression 
\begin{align*}
\frac{\langle\mathcal{F}_{ff}\left(\alpha_{c},0\right)\left[\hat{h}^{*},h\right], \mathbf{h}_{0}\rangle}{\langle\mathcal{F}_{\alpha f}\left(\alpha_{c},0\right)\hat{h}^{*}, \mathbf{h}_{0}\rangle},
\end{align*}
we obtain
\begin{align*}
&\begin{aligned}
    \mathcal{O}_{2}=&\frac{1}{36 \gamma ^2 \hat{C}^2 p_{0}^2 \left(e^{2 \gamma }-1\right)^2 e^{9 \gamma}}
    \bigg{[}60 p_{0}^4
    -63 \gamma ^4 e^{16 \gamma }+18 \gamma ^4 e^{18 \gamma }+86 p_{0}^4 e^{2 \gamma }
    \\
    &+e^{4 \gamma } \left(864 \gamma  p_{0}^4+530 p_{0}^4-25 \gamma ^2 p_{0}^2\right)
    \\&+e^{6 \gamma } \left(576 \gamma  p_{0}^4-2230 p_{0}^4+144 \gamma ^3 p_{0}^2+324 \gamma ^2 p_{0}^2\right)
    \\
    &+e^{8 \gamma } \left(27 \gamma ^4+576 \gamma   p_{0}^4+114 p_{0}^4-72 \gamma ^3  p_{0}^2-930 \gamma ^2  p_{0}^2\right)\\&+e^{10 \gamma } \left(1440 p_{0}^4-72 \gamma ^4+448 \gamma ^2  p_{0}^2\right)
    \\
    &+e^{12 \gamma } \left(36 \gamma ^4-72 \gamma ^3 p_{0}^2+507\gamma ^2 p_{0}^2\right)+e^{14 \gamma } \left(54 \gamma ^4-324 \gamma ^2 p_{0}^2\right)
    \bigg{]}.
\end{aligned}
\end{align*}
Thus, we obtain the exact expression of $\mathcal{O}$ as follows
\begin{align}\label{fuhao0715}
\begin{aligned}
\mathcal{O}=&
\frac{1}{36 \gamma ^2 \hat{C}^2 p_{0}^2 \left(e^{2 \gamma }-1\right)^2 e^{9 \gamma}}\bigg{[}
60 p_{0}^4+230 p_{0}^4 e^{2 \gamma }\\
&+e^{4 \gamma } \left(962  p_{0}^4-79 \gamma^2  p_{0}^2+1152\gamma p_{0}^4\right)
\\
&+e^{6 \gamma } \left(576 \gamma  p_{0}^4-3814 p_{0}^4+144 \gamma ^3 p_{0}^2+648 \gamma ^2 p_{0}^2\right)\\&+e^{8 \gamma } (27 \gamma ^4+576 \gamma p_{0}^4+1122 p_{0}^4-72 \gamma ^3 p_{0}^2
-1578 \gamma ^2 p_{0}^2)\\
&+e^{10 \gamma } \left(1440 p_{0}^4-72 \gamma ^4+988 \gamma ^2 p_{0}^2\right)+e^{14 \gamma } \left(54 \gamma ^4-324 \gamma ^2 p_{0}^2\right)
\\
&+e^{12 \gamma } \left(36 \gamma ^4-72 \gamma ^3 p_{0}^2+345 \gamma ^2 p_{0}^2\right)-63 \gamma ^4 e^{16 \gamma }+18 \gamma ^4 e^{18 \gamma }
\bigg{]}.
\end{aligned}
\end{align}
\end{proof}
\section{Large-amplitude traveling waves}\label{formu0520}
In the context of analyzing large bifurcation solutions, topological degree theory plays a pivotal role. When dealing with large bifurcation solutions, the complexity of the system often increases significantly. Topological degree theory provides a robust framework to handle this complexity effectively. The theory allows us to understand the global structure of the solution set. This section aims to extend the classical degree theory to address our specific problem \eqref{operator-equation}.

\subsection{Some useful lemmas}

\begin{lemma}\label{lemma31}
Let \(K\) be a compact set in \(Y\), and let \(B\) be a closed and bounded set in \(\overline{\mathcal{O}_{\delta}}\). Then, \(\mathcal{G}^{-1}(K) \cap B\) is compact in \((\alpha_s, +\infty) \times X\).
\end{lemma}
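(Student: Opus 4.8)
The plan is to establish sequential compactness of $\mathcal{G}^{-1}(K)\cap B$ in $(\alpha_s,+\infty)\times X$. Let $(\alpha_n,h_n)$ be a sequence in this set and put $k_n:=\mathcal{G}(\alpha_n,h_n)\in K$. Compactness of $K$ gives, along a subsequence, $k_n\to k$ in $Y=Y_1\times Y_2$. Boundedness of $B$ gives $\{\alpha_n\}$ bounded in $(\alpha_s,+\infty)$ and $\{h_n\}$ bounded in $X=C^{3+s}_{\mathrm{per}}(\overline{\Omega})$; by the Arzel\`a--Ascoli theorem (equivalently, the compact embedding $C^{3+s}_{\mathrm{per}}(\overline{\Omega})\hookrightarrow C^{3+s'}_{\mathrm{per}}(\overline{\Omega})$ for $0<s'<s$) we may extract a further subsequence with $\alpha_n\to\alpha_*$ and $h_n\to h_*$ in $C^{3+s'}_{\mathrm{per}}(\overline{\Omega})$. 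Since $\mathcal{G}_1,\mathcal{G}_2$ are polynomial in $h$ and its derivatives up to second order, this already yields $\mathcal{G}(\alpha_*,h_*)=k$, so $(\alpha_*,h_*)$ solves the nonlinear system \eqref{new-equation} with data $k$ and inherits periodicity, evenness in $q$, and $h=0$ on $B$. The whole content is therefore to upgrade the convergence $h_n\to h_*$ from $C^{3+s'}$ to $C^{3+s}$; this gives $h_*\in X$, and then closedness of $B$ places $(\alpha_*,h_*)$ in $B\subset(\alpha_s,+\infty)\times X$, so $(\alpha_*,h_*)\in\mathcal{G}^{-1}(K)\cap B$, finishing the proof.

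To carry out the upgrade I would use that on $\overline{\mathcal{O}_{\delta}}$ the problem is \emph{uniformly} elliptic with a \emph{uniformly} oblique boundary condition on $T$. Since $\mathcal{G}_1$ does not involve $\alpha$, write, for large $m,n$,
\[
\mathcal{G}_1(h_n)-\mathcal{G}_1(h_m)=L_{nm}(h_n-h_m),\qquad L_{nm}:=\int_0^1 D_h\mathcal{G}_1\!\left(t h_n+(1-t)h_m\right)dt,
\]
a linear second-order operator whose coefficients are polynomials in $h_n,h_m$ and their derivatives up to second order, hence uniformly bounded in $C^{1+s}_{\mathrm{per}}(\overline{\Omega})$. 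The principal part of $D_h\mathcal{G}_1(w)$ is $w_p^2\,\partial_{qq}-2w_qw_p\,\partial_{qp}+\bigl(w_q^2+(w+1)^2\bigr)\partial_{pp}$, with discriminant $-w_p^2(w+1)^2$. On $\overline{\mathcal{O}_{\delta}}$ the defining inequality $(h+1)h_p\ge\delta$, the value $h+1=1$ on $B$, and the $C^{3+s}$-bound on $B$ together force $h+1$ and $h_p$ to be bounded above and below by fixed positive constants on $\overline{D}$; for $m,n$ large the convex combinations $th_n+(1-t)h_m$ are $C^1$-close to $h_*$ and keep these bounds (with $\delta$ halved), so $L_{nm}$ is uniformly elliptic. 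On $T$, the linearization of $\mathcal{G}_2(\alpha_n,\cdot)$ along that same segment has normal-derivative coefficient proportional to $(w+1)^2w_p\bigl[2\lambda(\alpha_n,\gamma)+(w+1)^2-2\alpha_n(w+1)\bigr]$, which, by the $\mathcal{P}_{\delta}$-condition $(w+1)^3+2\lambda(w+1)-2\alpha(w+1)^2\ge\delta$ on $T$, is bounded below by a positive constant; thus the boundary operator $B_{nm}$ is uniformly oblique with coefficients bounded in $C^{2+s}_{\mathrm{per}}(T)$, and $h_n-h_m=0$ on $B$ is inherited.

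With these uniformities, applying the Schauder estimate for oblique-derivative problems \cite{Gilbarg2001} to
\[
L_{nm}(h_n-h_m)=(k_n)_1-(k_m)_1\quad\text{in }\Omega,\qquad B_{nm}(h_n-h_m)=(k_n)_2-(k_m)_2-\bigl(\mathcal{G}_2(\alpha_n,h_m)-\mathcal{G}_2(\alpha_m,h_m)\bigr)\quad\text{on }T,
\]
together with $h_n-h_m=0$ on $B$, yields a constant $C$ independent of $m,n$ with
\[
\|h_n-h_m\|_{C^{3+s}_{\mathrm{per}}(\overline{\Omega})}\le C\Bigl(\|(k_n)_1-(k_m)_1\|_{C^{1+s}_{\mathrm{per}}}+\|(k_n)_2-(k_m)_2\|_{C^{2+s}_{\mathrm{per}}}+|\alpha_n-\alpha_m|+\|h_n-h_m\|_{C^0}\Bigr).
\]
Because $\{k_n\}$ is Cauchy in $Y$, $\{\alpha_n\}$ is Cauchy in $\mathbb{R}$, and $h_n\to h_*$ in $C^{3+s'}\subset C^0$, the right-hand side tends to $0$; hence $\{h_n\}$ is Cauchy in $X$, converges to $h_*$ there, $h_*\in X$, and $(\alpha_n,h_n)\to(\alpha_*,h_*)$ in $(\alpha_s,+\infty)\times X$. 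This is exactly the asserted sequential compactness of $\mathcal{G}^{-1}(K)\cap B$.

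The step I expect to be the main obstacle is precisely the one that singles out $\overline{\mathcal{O}_{\delta}}$ rather than a generic bounded set: checking that the linearized interior and boundary operators stay uniformly elliptic and uniformly oblique along the whole segment joining two nearby elements of $\overline{\mathcal{O}_{\delta}}$, so that a single Schauder constant serves all large $m,n$. This is where the two inequalities defining $\mathcal{O}_{\delta}$ — $(h+1)h_p>\delta$ in $\overline{D}$ and the $\mathcal{P}_{\delta}$-condition on $T$ — are indispensable. The remaining ingredients (the Taylor/integral representation of $L_{nm}$, the compact Hölder embedding, and tracking Hölder exponents so the frozen coefficients land in $C^{1+s}$ and $C^{2+s}$) are routine.
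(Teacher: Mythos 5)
Your argument is correct and reaches the same conclusion by the same underlying mechanism --- uniform ellipticity and uniform obliqueness on $\overline{\mathcal{O}_{\delta}}$ feeding a Schauder estimate applied to differences of solutions --- but it is organized differently from the paper. You form the integrated linearization $L_{nm}=\int_0^1 D_h\mathcal{G}_1(th_n+(1-t)h_m)\,dt$ and invoke, in one shot, the $C^{3+s}$-level Schauder estimate for the mixed Dirichlet/oblique problem with coefficients only in $C^{1+s}_{\mathrm{per}}(\overline{\Omega})$ (resp.\ $C^{2+s}_{\mathrm{per}}(T)$). The paper instead differentiates the nonlinear system with respect to $q$, so that $\eta_j=\partial_q h_j$ solves a linear problem whose right-hand sides are compact in $C^{s}_{\mathrm{per}}(\overline{\Omega})\times C^{1+s}_{\mathrm{per}}(T)$, applies the $C^{2+s}$-level Agmon--Douglis--Nirenberg estimate \cite{Agmon1964} to the differences $\eta_j-\eta_k$, and then recovers the remaining third derivative $\partial_p^3 h_j$ algebraically from the equation. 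These are two presentations of the same tangential-differentiation argument: the higher-order Schauder estimate you cite is itself proved by differentiating in the tangential direction, so the paper's route makes explicit exactly the step your citation packages. What your version buys is brevity and the avoidance of the explicit cubic expressions $F_1,F_2$; what the paper's version buys is that it only needs the standard second-order estimate and makes transparent why $\partial_p^3h$ is recoverable. Two points in your write-up deserve the care you gave them and should not be omitted in a final version: (i) the product $(w+1)w_p$ along the segment $w=th_n+(1-t)h_m$ is not a convex combination of the corresponding products for $h_n$ and $h_m$, so the lower bound $\delta/2$ really does require the preliminary $C^1$-convergence to $h_*$ (or an equivalent uniform-closeness argument); and (ii) the constant in the Schauder estimate must be checked to depend only on the ellipticity/obliqueness constants and the $C^{1+s}$, $C^{2+s}$ coefficient bounds, all of which are uniform on the bounded set $B$. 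With those in place the proof is complete.
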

\begin{proof}
First, observe that for each \(\alpha \in (\alpha_s, +\infty)\), the coefficients of the second-order terms in \(\mathcal{G}_1(\alpha, h)\) satisfy  
\[
\left( (h + 1)^2 + (h_q)^2 \right) (h_p)^2 - (h_p h_q)^2 = (h + 1)^2 (h_p)^2 > \delta^2,
\]  
which implies that the nonlinear operator \(h \mapsto \mathcal{G}_1(\alpha, h)\) is uniformly elliptic in \(\mathcal{O}_{\delta}\).  

Second, note that for each \(\alpha\), by the definition of \(\mathcal{O}_{\delta}\), the coefficient of \(h_p\) in \(\mathcal{G}_2(\alpha, h)\) satisfies  
\[
(h + 1) h_p \left( (h + 1)^3 + 2\lambda(\alpha, \gamma)(h + 1) - 2\alpha (h + 1)^2 \right) > \delta^2,
\]  
which ensures that the boundary operator \(h \mapsto \mathcal{G}_2(\alpha, h)\) is uniformly oblique on \(T\) in the sense that it is bounded away from being tangential.

Let \(\{(f_k, g_k)\}\) be a convergent sequence in \(Y_1 \times Y_2 = C_{\text{per}}^{1+s}(\overline{\Omega}) \times C_{\text{per}}^{2+s}(T)\). Suppose also that  
\[
\begin{cases}
\mathcal{G}_1(\alpha_k, h_k) = f_k \quad \text{in } \overline{\Omega}, \\
\mathcal{G}_2(\alpha_k, h_k) = g_k \quad \text{on } T,
\end{cases}
\]  
where \((\alpha_k, h_k) \in \overline{\mathcal{O}_{\delta}}\), with \(h_k\) bounded in \(C_{\text{per}}^{3+s}(\overline{\Omega})\) and \(\alpha_k\) bounded in \((\alpha_s, +\infty)\). We now show that there exists a subsequence of \((\alpha_k, h_k)\) that converges in \((\alpha_s, +\infty) \times X\).  

Let \(\eta_j = \partial_q h_j\). Differentiating the system with respect to \(q\) yields  
\[
\begin{cases}
\partial_q \mathcal{G}_1(\alpha_j, h_j) = \partial_q f_j \quad \text{in } \overline{\Omega}, \\
\partial_q \mathcal{G}_2(\alpha_j, h_j) = \partial_q g_j \quad \text{on } T,
\end{cases}
\]  
which implies that \(\eta_j\) satisfies the following system:
\begin{align}\label{eq-eta}
\begin{cases}
\begin{aligned}
&(\partial_ph_j)^2(\partial_q^2\eta_j) 
+\left((h_j+1)^2
+(\partial_qh_j)^2\right)(\partial_p^2\eta_j)-
2(\partial_qh_j)(\partial_ph_j)(\partial_p\partial_q\eta_j)
\\&=F_1\left(
\partial_ph_j,\partial_qh_j,
\partial_p\partial_qh_j,
\partial_p^2h_j,
\partial_q^2h_j\right)+
\partial_qf_j,\quad (q,p)\in \Omega,
\end{aligned}\\
\begin{aligned}
&2(h_j+1)\partial_ph_j
\left(
2\lambda(\alpha_j,\gamma)(h_j+1)+(h_j+1)^3-2\alpha_j (h_j+1)^2\right)\partial_p\eta_j
-2p_{0}^2\partial_qh_j\partial_q\eta_j
\\&=F_2\left(
\partial_ph_j,\partial_qh_j,
\partial_p\partial_qh_j,
\partial_p^2h_j,
\partial_q^2h_j\right)+
\partial_qg_j,\quad p=0,
\end{aligned}\\
\eta_j=0,\quad p=-1,
\end{cases}
\end{align}
where $F_1\left(
\partial_ph_j,\partial_qh_j,
\partial_p\partial_qh_j,
\partial_p^2h_j,
\partial_q^2h_j\right)$
and $F_2\left(
\partial_ph_j,\partial_qh_j,
\partial_p\partial_qh_j,
\partial_p^2h_j,
\partial_q^2h_j\right)$ are given by \begin{align}\label{eq-eta-0520}
\begin{aligned}
&\begin{aligned}
&F_1\left(
\partial_ph_j,\partial_qh_j,
\partial_p\partial_qh_j,
\partial_p^2h_j,
\partial_q^2h_j\right)\\&=-2(\partial_q^2h_j)( \partial_ph_j)( \partial_p\partial_qh_j)
+\partial_qh_j(\partial_ph_j)^2\\
&\quad+2(h_j+1) (\partial_ph_j )( \partial_p\partial_qh_j)
-2(h_j+1)\partial_qh_j(\partial_p^2h_j)
\\&\quad+2(\partial_q^2h_j)(\partial_ph_j)(\partial_p\partial_qh_j)
+2(\partial_qh_j)(\partial_p\partial_qh_j)^2
\\&\quad-2(\partial_p^2h_j)(\partial_qh_j)(\partial_q^2h_j),
\end{aligned}\\
&\begin{aligned}
&F_2\left(
\partial_ph_j,\partial_qh_j,
\partial_p\partial_qh_j,
\partial_p^2h_j,
\partial_q^2h_j\right)\\&=
-2(h_j+1)\partial_qh_j
\left(\partial_ph_j\right)^2\left(
2\lambda(\alpha_{j},\gamma)+(h_j+1)^2-2\alpha_{j} (h_j+1)\right)
\\&\quad-(h_j+1)^2\left(\partial_ph_j\right)^2
\left(2(h_j+1)\partial_q h_j-2\alpha_{j} \partial_qh_j\right)
+2p_{0}^2(h_j+1)\partial_qh_j
\end{aligned}
\end{aligned}
\end{align}
which are cubic polynomial expressions. Note that  
\[
\left\{ F_1\left( \partial_p h_j, \partial_q h_j, \partial_p \partial_q h_j, \partial_p^2 h_j, \partial_q^2 h_j \right) \right\}_{j=1}^{\infty}
\]  
is uniformly bounded in \(C_{\text{per}}^{1+s}(\overline{\Omega})\), and \(\{\partial_q f_j\}_{j=1}^{\infty}\) is a Cauchy sequence in \(C_{\text{per}}^{s}(\overline{\Omega})\). Therefore,  
\[
\left\{ F_1\left( \partial_p h_j, \partial_q h_j, \partial_p \partial_q h_j, \partial_p^2 h_j, \partial_q^2 h_j \right) + \partial_q f_j \right\}_{j=1}^{\infty}
\]  
is compact in \(C_{\text{per}}^{s}(\overline{\Omega})\). Similarly,  
\[
\left\{ F_2\left( \partial_p h_j, \partial_q h_j, \partial_p \partial_q h_j, \partial_p^2 h_j, \partial_q^2 h_j \right) + \partial_q g_j \right\}_{j=1}^{\infty}
\]  
is compact in \(C_{\text{per}}^{1+s}(T)\).

We may assume that the following two sequences  
\begin{align*}
&\left\{ F_1\left( \partial_p h_j, \partial_q h_j, \partial_p \partial_q h_j, \partial_p^2 h_j, \partial_q^2 h_j \right) + \partial_q f_j \right\}_{j=1}^{\infty}, \\
&\left\{ F_2\left( \partial_p h_j, \partial_q h_j, \partial_p \partial_q h_j, \partial_p^2 h_j, \partial_q^2 h_j \right) + \partial_q g_j \right\}_{j=1}^{\infty}
\end{align*}  
are convergent in \(C_{\text{per}}^{s}(\overline{\Omega})\) and \(C_{\text{per}}^{1+s}(T)\), respectively.  

Taking differences, we deduce from equation \eqref{eq-eta} that \(\eta_{jk} := \eta_j - \eta_k\) satisfies  
\begin{align}\label{eq-eta-2}
\begin{cases}
\begin{aligned}
&(\partial_p h_j)^2 (\partial_q^2 \eta_{jk}) + \left( (h_j + 1)^2 + (\partial_q h_j)^2 \right) (\partial_p^2 \eta_{jk}) \\
&\quad - 2 (\partial_q h_j)(\partial_p h_j) (\partial_p \partial_q \eta_{jk}) = R_{1jk}, \quad (q, p) \in \Omega,
\end{aligned} \\
\begin{aligned}
&2 (h_j + 1) \partial_p h_j \left( 2\lambda(\alpha_j, \gamma)(h_j + 1) + (h_j + 1)^3 - 2\alpha_j (h_j + 1)^2 \right) \partial_p \eta_{jk} \\
&\quad - 2p_0^2 \partial_q h_j \partial_q \eta_{jk} = R_{2jk}, \quad p = 0,
\end{aligned} \\
\eta_{jk} = 0, \quad p = -1,
\end{cases}
\end{align}  
where \(R_{1jk}\) and \(R_{2jk}\) are given by  
\begin{align*}
\begin{aligned}
&R_{1jk} = F_1\left( \partial_p h_j, \partial_q h_j, \partial_p \partial_q h_j, \partial_p^2 h_j, \partial_q^2 h_j \right) \\
&\quad - F_1\left( \partial_p h_k, \partial_q h_k, \partial_p \partial_q h_k, \partial_p^2 h_k, \partial_q^2 h_k \right) + \partial_q (f_j - f_k) \\
&\quad + \left[ (\partial_p h_k)^2 - (\partial_p h_j)^2 \right] \partial_q^2 \eta_k + \left[ (1 + h_k)^2 - (1 + h_j)^2 + (\partial_q h_k)^2 - (\partial_q h_j)^2 \right] \partial_p^2 \eta_k \\
&\quad - 2 \left( \partial_q h_k \partial_p h_k - \partial_q h_j \partial_p h_j \right) \partial_p \partial_q \eta_k, \\
&R_{2jk} = F_2\left( \partial_p h_j, \partial_q h_j, \partial_p \partial_q h_j, \partial_p^2 h_j, \partial_q^2 h_j \right) \\
&\quad - F_2\left( \partial_p h_k, \partial_q h_k, \partial_p \partial_q h_k, \partial_p^2 h_k, \partial_q^2 h_k \right) \\
&\quad + 2 \left\{ (1 + h_k) \partial_p h_k \left[ 2\lambda(\alpha_k, \gamma)(1 + h_k) + (1 + h_k)^3 - 2\alpha_k (1 + h_k)^2 \right] \right. \\
&\quad \left. - (1 + h_j) \partial_p h_j \left[ 2\lambda(\alpha_j, \gamma)(1 + h_j) + (1 + h_j)^3 - 2\alpha_j (1 + h_j)^2 \right] \right\} \partial_p \eta_k \\
&\quad - 2p_0^2 (\partial_q h_k - \partial_q h_j) \partial_q \eta_k + \partial_q (g_j - g_k),
\end{aligned}
\end{align*}  
and which satisfy  
\[
\| R_{1jk} \|_{C_{\text{per}}^{s}(\overline{\Omega})} \to 0, \quad \| R_{2jk} \|_{C_{\text{per}}^{1+s}(T)} \to 0.
\]

Hence, we apply the mixed-boundary condition Schauder estimates \cite{Agmon1964} to get 
\[
\| \eta_{jk} \|_{C_{\text{per}}^{2+s}(\overline{\Omega})} \to 0 \quad \text{as} \quad j, k \to \infty.
\]  
Therefore, all third derivatives of \(h_j\), except possibly \(\partial_p^3 h_j\), are Cauchy sequences in \(C_{\text{per}}^{s}(\overline{\Omega})\). Since \(\partial_p^3 h_j\) can be expressed in terms of the other derivatives of \(h_j\) of order at most two, it follows that \(\partial_p^3 h_j\) is also a Cauchy sequence in \(C_{\text{per}}^{s}(\overline{\Omega})\).  We thus conclude that there exists a subsequence of \((\alpha_k, h_k)\) that converges in \((\alpha_s, +\infty) \times X\).
\end{proof}

\begin{lemma}\label{lemma32}
For each \((\alpha, h) \in \mathcal{O}_{\delta}\), \(\mathcal{G}_h(\alpha, h)\) is a Fredholm operator with index zero from \(X\) to \(Y\).
\end{lemma}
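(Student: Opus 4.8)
The plan is to recognise $\mathcal{G}_h(\alpha,h)$ as a linear, \emph{regular} elliptic boundary value problem in the Agmon--Douglis--Nirenberg sense, obtain the Fredholm property from the mixed-boundary Schauder estimates already used in \autoref{lemma31}, and then compute the index by a homotopy to an explicitly solvable constant-coefficient model operator.

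First I would write down the Fréchet derivative $\mathcal{G}_h(\alpha,h)w=(\mathcal{G}_{1h}(\alpha,h)w,\ \mathcal{G}_{2h}(\alpha,h)w)$, which is the linearization recorded (in its $q$-undifferentiated form) in \eqref{eq-eta}: the interior part is
\[
\mathcal{G}_{1h}(\alpha,h)w=(h_p)^2 w_{qq}-2h_q h_p\, w_{qp}+\big[(h+1)^2+(h_q)^2\big]w_{pp}+(\text{lower-order terms}),
\]
whose principal symbol has discriminant $(h+1)^2(h_p)^2>\delta^2$ on $\overline{\Omega}$ --- exactly the bound established in the proof of \autoref{lemma31} --- so $\mathcal{G}_{1h}(\alpha,h)$ is uniformly elliptic with coefficients in $C^{1+s}_{\text{per}}(\overline{\Omega})$; on $T$ the boundary part is first order,
\[
\mathcal{G}_{2h}(\alpha,h)w=2(h+1)h_p\big[(h+1)^3+2\lambda(\alpha,\gamma)(h+1)-2\alpha(h+1)^2\big]w_p-2p_0^2 h_q\, w_q+(\text{zeroth order}),
\]
and by the definitions of $\mathcal{O}_\delta$ and $\mathcal{P}_\delta$ (again as in the proof of \autoref{lemma31}) the coefficient of $w_p$ is $>2\delta^2$, so the condition on $T$ is uniformly oblique; on $B$ the Dirichlet condition $w=0$ is inherited unchanged. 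Since $h\in X$ is even and periodic in $q$, all coefficients are even in $q$ and the operator maps the even/periodic class into itself, consistently with the definitions of $X$ and $Y$.

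Second, the Fredholm property. The triple (second-order uniformly elliptic interior operator; first-order oblique condition on $T$; Dirichlet condition on $B$) is a regular elliptic boundary value problem: the oblique-derivative condition automatically satisfies the complementing (Lopatinskii--Shapiro) condition for a scalar second-order equation, and on the periodic channel the boundary $T\sqcup B$ is the disjoint union of two smooth circles, so no corner analysis is needed. Hence the mixed-boundary Schauder estimates invoked in \autoref{lemma31} (see \cite{Agmon1964,Gilbarg2001}) give, for all $w\in X$,
\[
\|w\|_{C^{3+s}_{\text{per}}(\overline{\Omega})}\le C\Big(\|\mathcal{G}_{1h}(\alpha,h)w\|_{C^{1+s}_{\text{per}}(\overline{\Omega})}+\|\mathcal{G}_{2h}(\alpha,h)w\|_{C^{2+s}_{\text{per}}(T)}+\|w\|_{C^{0}(\overline{\Omega})}\Big),
\]
which, together with the compact embedding $C^{3+s}_{\text{per}}(\overline{\Omega})\hookrightarrow C^{0}(\overline{\Omega})$, forces a finite-dimensional kernel and a closed range; the analogous estimate for the (adjoint) oblique--Dirichlet problem gives finite codimension. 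Thus $\mathcal{G}_h(\alpha,h):X\to Y$ is Fredholm. To pin the index at zero I would use homotopy invariance of the Fredholm index. The lower-order part of $\mathcal{G}_{1h}$ is a relatively compact perturbation ($X=C^{3+s}\hookrightarrow C^{2+s}$ into $Y_1=C^{1+s}$ compactly), so it may be dropped without changing the index; then deform the pointwise positive-definite principal coefficient matrix of $\mathcal{G}_{1h}$ along the convex segment to the identity, and the coefficient of $w_p$ in $\mathcal{G}_{2h}$ along a strictly positive path to $1$ while sending its tangential and zeroth-order coefficients to zero. Along this path every operator $\mathcal{L}_t:X\to Y$ is again a regular elliptic problem of the same type with ellipticity and obliqueness constants bounded below uniformly in $t$ (convexity of positive-definite matrices makes $\lambda_{\min}$ stay away from $0$), hence Fredholm by the same Schauder estimate, so $t\mapsto\mathrm{ind}\,\mathcal{L}_t$ is locally constant and therefore constant. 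For the endpoint $\mathcal{L}_0w=(w_{qq}+w_{pp},\ w_p|_{T})$ with $w=0$ on $B$, expanding $w=\sum_{k\ge 0}w_k(p)\cos(kq)$ decouples the problem into $w_k''-k^2w_k=f_k$ on $(-1,0)$, $w_k'(0)=g_k$, $w_k(-1)=0$, each of which is uniquely solvable (the homogeneous version forces $w_k\equiv 0$, checked directly for $k=0$ and for $k\ge 1$ using $\cosh/\sinh$). Hence $\mathcal{L}_0$ is an isomorphism with $\mathrm{ind}\,\mathcal{L}_0=0$, and therefore $\mathrm{ind}\,\mathcal{G}_h(\alpha,h)=0$.

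\textbf{Main obstacle.} The delicate point is to verify that the homotopy stays inside the class of \emph{regular} elliptic problems with \emph{uniform} constants, so that the Schauder a priori estimates --- and with them closedness of the range and finiteness of the kernel and cokernel --- persist along the whole path; this is precisely what legitimizes the use of homotopy invariance of the index. A secondary bookkeeping issue is the even-in-$q$ constraint built into $X$ and $Y$, which is handled by working throughout with cosine series (the coefficients of $\mathcal{G}_h(\alpha,h)$, being functions of $h$ and its derivatives with $h$ even, respect this symmetry, as noted above).
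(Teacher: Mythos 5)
Your proposal is correct, and the first half (Fredholm property) follows essentially the same path as the paper: uniform ellipticity of $\mathcal{G}_{1h}$ from $(h+1)^2(h_p)^2>\delta^2$, uniform obliqueness of $\mathcal{G}_{2h}$ from the definition of $\mathcal{O}_\delta$, the mixed-boundary Schauder a priori estimate $\|\phi\|_X\le C(\|\phi\|_{Y_1}+\|\mathcal{G}_{1h}\phi\|_{Y_1}+\|\mathcal{G}_{2h}\phi\|_{Y_2})$, and the compact embedding to get a finite-dimensional kernel and closed range. Where you genuinely diverge is the index computation. The paper does not homotope the coefficients at all: it observes that $\mathcal{G}_h(\alpha_c,H)=\mathcal{F}_f(\alpha_c,0)$, whose kernel and cokernel were already shown to be one-dimensional in \autoref{yige0620} and \autoref{range0507}, so its index is zero; since the index is locally constant on the connected set $\mathcal{O}_\delta$, it vanishes everywhere. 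You instead deform the principal coefficients to the flat Laplacian with a Neumann condition on $T$ and Dirichlet on $B$, and check by cosine-series decomposition that the model operator is an isomorphism. Both routes are sound. The paper's is shorter because it reuses the spectral analysis of Section 4 (and implicitly uses that $(\alpha_c,H)\in\mathcal{O}_\delta$, which holds since $(H+1)H_p=\gamma e^{2\gamma(p+1)}\ge\gamma>\delta$); yours is self-contained and does not depend on the bifurcation-point computations, at the cost of having to verify that regular ellipticity, obliqueness, and the Schauder constants persist uniformly along the homotopy — which you correctly identify as the delicate point and handle via concavity of $\lambda_{\min}$ and by sending the tangential boundary term (which is \emph{not} a compact perturbation from $X$ into $Y_2=C^{2+s}_{\mathrm{per}}(T)$) to zero along the path rather than discarding it. One small gloss: your remark that "the analogous estimate for the adjoint problem gives finite codimension" is not really needed and is awkward in Hölder spaces; finite codimension already follows from the index argument itself (a semi-Fredholm operator with closed range and finite-dimensional kernel has a locally constant extended index, which your endpoint computation pins at $0$), which is also how the paper implicitly closes this point.
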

\begin{proof}
To derive the linearized operator, we differentiate the nonlinear operator \(\mathcal{G}(\alpha, h)\) defined in \eqref{operator-equation} with respect to \(h\). This yields the following expressions for the components of the Fréchet derivative \(\mathcal{G}_h(\alpha, h) = (\mathcal{G}_{1h}(\alpha, h), \mathcal{G}_{2h}(\alpha, h))\):
\begin{align*}
&\begin{aligned}
\mathcal{G}_{1h}(\alpha, h) = {} & (h_p)^2 \partial^2_q - 2h_q h_p \partial^2_{qp} + \left( (h + 1)^2 + (h_q)^2 \right) \partial^2_p \\
& + 2h_{qq} h_p \partial_p - (h_p)^2 - 2(h + 1) h_p \partial_p + 2h_{pp} (h + 1) \\
& - 2h_q h_{qp} \partial_p - 2h_p h_{qp} \partial_q + 2h_{pp} h_q \partial_q,
\end{aligned} \\
&\begin{aligned}
\mathcal{G}_{2h}(\alpha, h) = {} & 2(h + 1) h_p \left( 2\lambda(\alpha, \gamma)(h + 1) + (h + 1)^3 - 2\alpha (h + 1)^2 \right) \partial_p \\
& - 2p_0^2 h_q \partial_q + 2(h_p)^2 \left( 2\lambda(\alpha, \gamma)(h + 1) + (h + 1)^3 - 2\alpha (h + 1)^2 \right) \\
& + 2(h + 1)^2 (h_p)^2 (h + 1 - \alpha) - 2p_0^2 (h + 1).
\end{aligned}
\end{align*}

The coefficients of the second-order terms in \(\mathcal{G}_{1h}(\alpha, h)\) satisfy  
\[
\left( (h + 1)^2 + (h_q)^2 \right) (h_p)^2 - (h_p h_q)^2 = (h + 1)^2 (h_p)^2 > \delta^2,
\]  
which implies that the linearized operator \(\mathcal{G}_{1h}(\alpha, h)\) is uniformly elliptic in \(\mathcal{O}_{\delta}\). 
Furthermore, for each \(\alpha\), the definition of \(\mathcal{O}_{\delta}\) ensures that the coefficient of the \(\partial_p\) term in \(\mathcal{G}_{2h}(\alpha, h)\) satisfies  
\[
2(h + 1) h_p \left( 2\lambda(\alpha, \gamma)(h + 1) + (h + 1)^3 - 2\alpha (h + 1)^2 \right) > 2\delta^2.
\]  
This implies that \(\mathcal{G}_{2h}(\alpha, h)\) is uniformly oblique on \(T\), as it is bounded away from being tangential.

Let \(\phi \in C_{\text{per}}^{3+s}(\overline{\Omega})\). Taking the \(q\)-derivatives of \(\mathcal{G}_{1h}(\alpha, h)\phi\) and \(\mathcal{G}_{2h}(\alpha, h)\phi\), we obtain:
\begin{align}
\begin{aligned}
\mathcal{G}_{1h}(\alpha, h) \partial_q \phi &= \partial_q \left( \mathcal{G}_{1h}(\alpha, h) \phi \right) - \left( \partial_q \mathcal{G}_{1h}(\alpha, h) \right) \phi, \\
\mathcal{G}_{2h}(\alpha, h) \partial_q \phi &= \partial_q \left( \mathcal{G}_{2h}(\alpha, h) \phi \right) - \left( \partial_q \mathcal{G}_{2h}(\alpha, h) \right) \phi.
\end{aligned}
\end{align}

Based on classical Schauder estimates \cite{Gilbarg2001}, there exists a constant \(C > 0\) such that:
\[
\| \partial_q \phi \|_{C_{\text{per}}^{2+s}(\overline{\Omega})} \leq C \left( \| \phi \|_{C_{\text{per}}^{2+s}(\overline{\Omega})} + \| \left( \partial_q \mathcal{G}_{1h}(\alpha, h) \right) \phi \|_{C_{\text{per}}^{s}(\overline{\Omega})} + \| \left( \partial_q \mathcal{G}_{2h}(\alpha, h) \right) \phi \|_{C_{\text{per}}^{s}(T)} \right).
\]

Since \(\partial_p^2 \phi\) can be expressed in terms of lower-order derivatives (up to first order), it follows that \(\| \partial_q^3 \phi \|_{C_{\text{per}}^{s}(\overline{\Omega})}\) admits a similar estimate. Consequently, we have:
\[
\| \phi \|_{C_{\text{per}}^{3+s}(\overline{\Omega})} \leq C \left( \| \phi \|_{C_{\text{per}}^{2+s}(\overline{\Omega})} + \| \left( \partial_q \mathcal{G}_{1h}(\alpha, h) \right) \phi \|_{C_{\text{per}}^{s}(\overline{\Omega})} + \| \left( \partial_q \mathcal{G}_{2h}(\alpha, h) \right) \phi \|_{C_{\text{per}}^{s}(T)} \right).
\]

From the classical Schauder estimate for \(\phi\):
\[
\| \phi \|_{C_{\text{per}}^{2+s}(\overline{\Omega})} \leq C \left( \| \phi \|_{C_{\text{per}}^{s}(\overline{\Omega})} + \| \mathcal{G}_{1h}(\alpha, h) \phi \|_{C_{\text{per}}^{s}(\overline{\Omega})} + \| \mathcal{G}_{2h}(\alpha, h) \phi \|_{C_{\text{per}}^{1+s}(T)} \right),
\]
we deduce:
\[
\| \phi \|_X \leq C \left( \| \phi \|_{Y_1} + \| \mathcal{G}_{1h}(\alpha, h) \phi \|_{Y_1} + \| \mathcal{G}_{2h}(\alpha, h) \phi \|_{Y_2} \right).
\]

This inequality immediately implies that \(\mathcal{G}_h(\alpha, h)\) has a finite-dimensional null space and a closed range. We have already shown that \(\mathcal{G}_h(\alpha_c, H) = \mathcal{F}_f(\alpha_c, 0)\) has index zero, as its null space has dimension 1 and its range has codimension 1. Since the Fredholm index is a continuous function of \((\alpha, h)\) and \(\mathcal{O}_\delta\) is connected, the index of \(\mathcal{G}_h(\alpha, h)\) vanishes for all \((\alpha, h) \in \mathcal{O}_\delta\).\end{proof}

Our next preliminary result concerns the spectral properties of the linear operator \(\mathcal{G}_h(\alpha, h)\) for \((\alpha, h) \in \mathcal{O}_\delta\). To facilitate the discussion, we introduce the following notations:  
\[
\mathcal{P} := \mathcal{G}_{1h}(\alpha, h), \quad \mathcal{Q} := \mathcal{G}_{2h}(\alpha, h).
\]  
Define the domain \(D\) for the operator $\mathcal{P}$ as  
\begin{align}
D = \{ \phi \in X \mid \mathcal{Q} \phi = 0 \}.
\end{align}  
The spectrum of \(\mathcal{P}\) is defined by  
\begin{align}
\Sigma(\alpha, h) = \{ \lambda \in \mathbb{C} \mid \mathcal{P} - \lambda I : D \to Y_1 \text{ is not an isomorphism} \},
\end{align}  
where \(\mathcal{P}\), \(D\), and \(Y_1\) are complexified in the natural way.

\begin{lemma}\label{lemma33}(Spectral properties).
\begin{description}
\item[(1)]  For all \(\delta \in (0, \gamma)\), there exist constants \(c_1, c_2 > 0\) such that for all \((\alpha, h) \in \mathcal{O}_{\delta}\) with \(\alpha + \| h \|_X \leq M\), for all \(\phi \in X\), and for all real \(\lambda \geq c_2\), the following estimate holds:
\begin{align}
c_1 \| \phi \|_X \leq \lambda^{\frac{s}{2}} \| (\mathcal{P} - \lambda I) \phi \|_{Y_1} + \lambda^{\frac{s + 1}{2}} \| \mathcal{Q} \phi \|_{Y_2}.
\end{align}
\item[(2)] The spectrum \(\Sigma(\alpha, h)\) consists entirely of eigenvalues of finite multiplicity with no finite accumulation points. Furthermore, there exists a neighborhood \(U\) of \([\alpha_s, +\infty)\) in the complex plane such that \(\Sigma(\alpha, h) \cap U\) is a finite set.
\item[(3)] For all \((\alpha, h) \in \mathcal{O}_{\delta}\), the boundary operator \(\mathcal{Q} : X \to Y_2\) is surjective.
\end{description}
\end{lemma}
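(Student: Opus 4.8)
The plan is to obtain all three statements from the theory of parameter–elliptic (Agmon) boundary value problems, the only substantive input being that the pair $(\mathcal{P}-\lambda I,\mathcal{Q})$ is parameter–elliptic along the positive real axis. For part (1), I would first record from \autoref{lemma32} that on $\mathcal{O}_{\delta}$ the interior operator $\mathcal{P}=\mathcal{G}_{1h}(\alpha,h)$ is uniformly elliptic with ellipticity constant $\geq\delta^{2}$ and has \emph{negative–definite} principal symbol, that the boundary operator $\mathcal{Q}=\mathcal{G}_{2h}(\alpha,h)$ on $T$ is of first order with $\partial_{p}$–coefficient $\geq 2\delta^{2}$ (hence uniformly non-tangential), and that the condition on $B$ is Dirichlet; moreover, on $\{\alpha+\|h\|_{X}\leq M\}$ all coefficients are bounded in the relevant Hölder norms. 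Because the principal symbol of $\mathcal{P}$ is negative definite, the parameter-dependent principal symbol of $\mathcal{P}-\lambda I$ (the symbol of $\mathcal{P}$ minus $\lambda$) is nonzero for $(\xi,\lambda)\neq(0,0)$ with $\lambda$ in a closed sector $\Lambda_{\theta_{0}}=\{\lambda\in\C:|\arg\lambda|\leq\theta_{0}\}$, $\theta_{0}$ small; thus $\R_{\geq 0}$ is a ray of minimal growth. I would then check the Shapiro–Lopatinskii (complementing) condition with parameter for the mixed problem $(\mathcal{P}-\lambda I;\ \mathcal{Q}\text{ on }T;\ \mathrm{id}\text{ on }B)$ on the frozen-coefficient half-line models: for the Dirichlet face this is automatic, and for the oblique face it reduces, after the parabolic rescaling $\xi\mapsto\lambda^{1/2}\xi$, to the nonvanishing of a first-order expression that is guaranteed exactly by non-tangentiality of $\mathcal{Q}$. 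With parameter–ellipticity in hand, the Agmon–Douglis–Nirenberg a priori estimates for elliptic boundary value problems depending on a large parameter, read in the Hölder scale $C^{s}_{\mathrm{per}}\hookrightarrow C^{1+s}_{\mathrm{per}}\hookrightarrow C^{2+s}_{\mathrm{per}}\hookrightarrow C^{3+s}_{\mathrm{per}}$ (cf.\ \cite{Agmon1964,Gilbarg2001}), yield precisely the stated inequality, with $c_{1},c_{2}$ depending only on $\delta$, $M$, and the fixed data, since in that theory the estimate constants depend only on the ellipticity/obliqueness bounds and the Hölder norms of the coefficients.

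For parts (2) and (3), I would fix a large real $\lambda=c_{2}$ and consider $L_{\lambda}\colon X\to Y_{1}\times Y_{2}$, $\phi\mapsto\big((\mathcal{P}-\lambda I)\phi,\ \mathcal{Q}\phi\big)$. Part (1) shows $L_{c_{2}}$ is injective, and since $L_{\lambda}$ differs from $\mathcal{G}_{h}(\alpha,h)$ by $\phi\mapsto(-\lambda\phi,0)$ — compact because $X=C^{3+s}_{\mathrm{per}}(\overline{\Omega})\hookrightarrow Y_{1}=C^{1+s}_{\mathrm{per}}(\overline{\Omega})$ compactly — \autoref{lemma32} gives that $L_{\lambda}$ is Fredholm of index $0$ for every $\lambda$, so $L_{c_{2}}$ is an isomorphism. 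This already yields part (3): surjectivity of $L_{c_{2}}$ onto $Y_{1}\times Y_{2}$ forces $\mathcal{Q}\colon X\to Y_{2}$ to be surjective (take data $(0,g)$). Since $\mathcal{Q}$ is onto and independent of $\lambda$, for every $\lambda$ one has $\lambda\notin\Sigma(\alpha,h)\iff L_{\lambda}\text{ is an isomorphism}\iff\ker L_{\lambda}=\{0\}$; as $\lambda\mapsto L_{\lambda}$ is an analytic family of Fredholm index-$0$ operators invertible at $\lambda=c_{2}$, the analytic Fredholm theorem gives that $\Sigma(\alpha,h)$ is discrete, has no finite accumulation point, and consists of eigenvalues of finite algebraic multiplicity. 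Finally, the estimate of part (1) persists for $\lambda\in\Lambda_{\theta_{0}}$ with $|\lambda|\geq c_{2}$, so that sector is free of spectrum; choosing $U$ to be the union of this sector with a suitable bounded open set makes $\Sigma(\alpha,h)\cap U$ bounded, hence finite by discreteness.

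The main obstacle I anticipate is entirely within part (1): verifying the Shapiro–Lopatinskii condition \emph{with a large parameter} for the genuinely mixed boundary conditions (oblique on $T$, Dirichlet on $B$), and then invoking the parameter-dependent Agmon–Douglis–Nirenberg estimates in the Hölder scale carefully enough to recover the precise weights $\lambda^{s/2}$ and $\lambda^{(s+1)/2}$ together with the uniformity of $c_{1},c_{2}$ over $\mathcal{O}_{\delta}\cap\{\alpha+\|h\|_{X}\leq M\}$. Everything afterward — the Fredholm, analytic-Fredholm, and sector arguments underlying parts (2) and (3) — is soft functional analysis resting on \autoref{lemma32} and on the isomorphism property established in part (1).
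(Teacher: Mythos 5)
Your proposal is correct, and for parts (2) and (3) it coincides with the paper's argument: both deduce Fredholmness of $(\mathcal{P}-\lambda I,\mathcal{Q})$ from \autoref{lemma32} by a compact perturbation, use the estimate of part (1) to get injectivity (hence bijectivity) for large real $\lambda$, read off surjectivity of $\mathcal{Q}$ from surjectivity of the pair, and obtain discreteness of $\Sigma(\alpha,h)$ together with finiteness in a neighborhood of the positive real axis (the paper leaves the analytic-Fredholm step implicit, which you spell out). The only genuine divergence is in how part (1) is implemented. You propose to verify parameter-ellipticity and the Shapiro--Lopatinskii condition with parameter and then quote parameter-dependent Agmon--Douglis--Nirenberg estimates ``in the H\"older scale.'' The paper instead follows Agmon's auxiliary-variable device \cite{Agmon1962}: it augments the operator to $\mathcal{H}=\mathcal{P}+e^{i\theta}\partial_t^2$ on $\mathbb{R}\times\Omega$, applies the \emph{parameter-free} Schauder estimate of \cite{Agmon1964} to $\psi(t,q,p)=e^{i|\lambda|^{1/2}t}\eta(t)\phi(q,p)$, and extracts the weights $\lambda^{s/2}$ and $\lambda^{(s+1)/2}$ from the H\"older norms of the oscillating factor. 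These are the same mathematics at different levels of explicitness, but the distinction matters for rigor: parameter-weighted a priori estimates are standard in the $L^2$/Sobolev setting (Agmon, Agranovich--Vishik) yet are not available off the shelf in $C^{k+s}_{\mathrm{per}}$, so the ``invoke the parameter-dependent ADN estimates'' step of your plan cannot be a bare citation --- the auxiliary-variable reduction (or an equivalent argument, as in \cite{Healey1998}) is precisely how one proves the estimate you want to quote, and it also supplies, for free, the extension of the estimate to a sector $|\arg\lambda|\le\tfrac{\pi}{2}+\epsilon$ that you need at the end of part (2). Your remark that the principal symbol is negative definite (so that $\mathbb{R}_{\ge 0}$ is a ray of minimal growth) and that obliqueness of $\mathcal{Q}$ yields the complementing condition is consistent with the ellipticity and obliqueness bounds recorded in \autoref{lemma31} and \autoref{lemma32}; with the auxiliary-variable step made explicit, your outline matches the paper's proof.
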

\begin{proof}
The second conclusion (2) follows from (1). Likewise, a similar argument establishes that \((\mathcal{P} - \lambda I, \mathcal{Q})\) is Fredholm with index 0. By (1), it has a trivial null space for sufficiently large \(|\lambda|\), and hence is injective and surjective. This implies (2).  
To prove (3), note that for some \(\lambda > c_2\), the operator \((\mathcal{P} - \lambda I, \mathcal{Q})\) is surjective onto \(Y\). It follows that \(\mathcal{Q} : X \to Y_2\) is surjective.  

We now follow the approach in \cite{Agmon1962} to establish (1). The novelty lies in (1), where additional work is required before applying elliptic estimates. Let \(\phi \in X\) and \(\lambda \in \mathbb{C}\) be given, and set \(\theta = \arg \lambda\), with \(|\theta| \leq \frac{\pi}{2} + \epsilon\) for some \(\epsilon > 0\).  

Consider the operator \(\mathcal{H} = \mathcal{P} + e^{i\theta} \partial_t^2\) on \(\mathbb{R} \times \Omega\), where \(\mathcal{P} = \mathcal{G}_{1h}(\alpha, h)\). One can verify that \(\mathcal{H}\) is elliptic, and the boundary condition on the top is oblique. Let \(\eta : \mathbb{R} \to \mathbb{R}\) be a cutoff function compactly supported in \(I = (-1, 1)\). Define  
\[
r(t) = e^{i|\lambda|^{\frac{1}{2}}} \eta(t), \quad \psi(t, q, p) = r(t) \phi(q, p).
\]

Applying the Schauder estimates on \(\mathbb{R} \times \Omega\) with the boundary operator \(\mathcal{Q}\) to \(\psi(t, q, p)\), we deduce the inequality:
\begin{align}\label{keyone}
\norm{\psi}_{C^{3+s}(I \times \overline{\Omega})}
\leq C \left(
\norm{\psi}_{C^{s}(I \times \overline{\Omega})} +
\norm{\mathcal{H} \psi}_{C^{1+s}(I \times \overline{\Omega})} +
\norm{\mathcal{Q} \psi}_{C^{2+s}(I \times T)}
\right).
\end{align}

Note that there exist constants \(C_1, C_2 > 0\) such that \(r(t)\) satisfies:
\[
C_1 |\lambda|^{\frac{s}{2}} \leq \norm{r(t)}_{C^{s}(\mathbb{R})} \leq C_2 |\lambda|^{\frac{s}{2}}, \quad
C_1 |\lambda|^{\frac{1 + s}{2}} \leq \norm{r(t)}_{C^{1+s}(\mathbb{R})} \leq C_2 |\lambda|^{\frac{1 + s}{2}}.
\]
For \(|\lambda|\) sufficiently large, this help us unpack \eqref{keyone} to derive the following estimates:
\begin{align}\label{keytwo}
\begin{aligned}
\norm{\mathcal{H} \psi}_{C^{1+s}(I \times \overline{\Omega})}
&= \norm{
r(t) \mathcal{P} \phi - \lambda r(t) \phi +
\left( 2i |\lambda|^{\frac{1}{2}} \eta' + \eta'' \right) e^{i\theta} e^{i |\lambda|^{\frac{1}{2}} t} \phi
}_{C^{1+s}(I \times \overline{\Omega})} \\
&\leq C |\lambda|^{\frac{1 + s}{2}} \norm{ (\mathcal{P} - \lambda I) \phi }_{C^{1+s}(\overline{\Omega})} 
+ C |\lambda|^{\frac{1 + s}{2}} \norm{\phi}_{C^{1+s}(\overline{\Omega})},
\end{aligned}
\end{align}

In a similar manner, analyzing the boundary terms, we find that for some \(C > 0\) independent of \(\phi\) and for \(|\lambda|\) sufficiently large,
\begin{align}\label{keythree}
\norm{\mathcal{Q} \psi}_{C^{2+s}(I \times T)} \leq C |\lambda|^{\frac{2 + s}{2}} \norm{\phi}_{C^{2+s}(T)}.
\end{align}
Combining these estimates, we obtain:
\begin{align}\label{keyfour}
\begin{aligned}
\norm{\psi}_{C^{3+s}(I \times \overline{\Omega})} \leq{} & C \norm{\psi}_{C^{s}(I \times \overline{\Omega})} 
+ C |\lambda|^{\frac{1 + s}{2}} \left( \norm{ (\mathcal{P} - \lambda I) \phi }_{C^{1+s}(\overline{\Omega})} + \norm{\phi}_{C^{1+s}(\overline{\Omega})} \right) \\
& + C |\lambda|^{\frac{2 + s}{2}} \norm{\phi}_{C^{2+s}(T)}.
\end{aligned}
\end{align}

Note that for \(|\lambda|\) sufficiently large, the following bound holds:
\[
C \norm{\psi}_{C^{s}(I \times \overline{\Omega})} + C |\lambda|^{\frac{1 + s}{2}} \norm{\phi}_{C^{1+s}(\overline{\Omega})} \leq \frac{1}{2} \norm{\psi}_{C^{3+s}(I \times \overline{\Omega})},
\]
which implies, after rearrangement,
\begin{align}\label{keyfive}
\norm{\psi}_{C^{3+s}(I \times \overline{\Omega})} \leq C |\lambda|^{\frac{1 + s}{2}} \norm{ (\mathcal{P} - \lambda I) \phi }_{C^{1+s}(\overline{\Omega})} + C |\lambda|^{\frac{2 + s}{2}} \norm{\mathcal{Q} \phi}_{C^{2+s}(T)}.
\end{align}
\end{proof}
\subsection{The degree}\label{degree-theory}

The nonlinear boundary condition renders the classical Leray–Schauder degree inapplicable to our problem. Instead, we employ the variant developed by Healey and Simpson \cite{Healey1998}. Specifically, let \(W\) be an open bounded subset of \(X\), and fix \(\alpha \in [\alpha_s, +\infty)\). The results established in \autoref{lemma31}–\autoref{lemma33} allow us to apply the generalization of the Leray–Schauder degree introduced in that work, which we now briefly summarize.

Let \(W\) be an open bounded subset of \(X\), and let \(f \in Y \setminus \mathcal{G}(\alpha, \partial W)\) be a regular value of \(\mathcal{G}(\alpha, \cdot)\). The Healey–Simpson degree of \(\mathcal{G}(\alpha, \cdot)\) at \(f\) with respect to \(W\) is defined by  
\[
\text{deg}(\mathcal{G}(\alpha, \cdot), W, f) = \sum_{w \in \mathcal{G}^{-1}(f)} (-1)^{n(w)},
\]  
where \(n(w)\) denotes the number of positive real eigenvalues \(\sigma\) (counted with multiplicity) of the linear operator \(\mathcal{G}_{1w}(\alpha, w)\) under the boundary condition \(\mathcal{G}_{2w}(\alpha, w)h = 0\) on \(T\). That is, \(n(w)\) is determined by the eigenvalue problem:  
\[
\begin{cases}
\mathcal{G}_{1w}(\alpha, w)h = \sigma h, \\
\mathcal{G}_{2w}(\alpha, w)h = 0.
\end{cases}
\]

By \autoref{lemma33}, \(n(h)\) is finite. Moreover, the properness of \(\mathcal{G}(\alpha, \cdot)\) established in \autoref{lemma31} implies that \(\mathcal{G}^{-1}(f) \cap W\) is a finite set. Thus, the degree is well-defined at regular values. This definition is extended to critical values in the standard way via the Sard–Smale theorem (which is applicable by \autoref{lemma32}).  
Our interest in degree theory stems from the fact that the degree is invariant under any homotopy that respects the boundaries of \(W\) in the following sense. Let \(Q \subset [0, 1] \times W\) be open. We define  $Q_t$ and its boundary set as
\[
Q_t := \{ w \in W : (t, w) \in Q \}, \quad \partial Q_t := \{ w \in W : (t, w) \in \partial Q \}.
\]  

\begin{lemma}[Homotopy invariance]  
Suppose \(Q \subset [0, 1] \times W\) is open. If \(\mathcal{H} \in C^2(Q; Y)\) is proper and, for each \(t \in [0, 1]\), \(\mathcal{H}(t, \cdot)\) satisfies the conclusions of  \autoref{lemma31}–\autoref{lemma33}, then  
\[
\text{deg}(\mathcal{H}(0, \cdot), Q_0, f) = \text{deg}(\mathcal{H}(1, \cdot), Q_1, f),
\]  
provided \(f \notin \mathcal{H}(t, \partial Q_t)\) for all \(t \in [0, 1]\).  
\end{lemma}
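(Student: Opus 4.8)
The plan is to run the classical ``parity along a one-dimensional preimage'' proof of homotopy invariance, adapted to the Healey--Simpson degree and to the mixed nonlinear oblique boundary condition at hand. \textbf{Step 1 (reduction to a regular value).} Since $\mathcal{G}_h(\alpha,h)$, and hence each slice linearization $\mathcal{H}_w(t,\cdot)$, is Fredholm of index zero on $\mathcal{O}_\delta$ by \autoref{lemma32}, the full map $\mathcal{H}\colon Q\to Y$ has Fréchet derivative $D\mathcal{H}(t,w)\colon\mathbb{R}\times X\to Y$ that is Fredholm of index one, so the Sard--Smale theorem applies. I would first choose $f'$ close to $f$ that is simultaneously a regular value of $\mathcal{H}(0,\cdot)$, of $\mathcal{H}(1,\cdot)$, and of $\mathcal{H}$; by the properness in \autoref{lemma31} the sets $\mathcal{H}(t,\partial Q_t)$ stay a positive distance from $f$ uniformly in $t$, so $f'$ may also be chosen to avoid all $\mathcal{H}(t,\partial Q_t)$. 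Because the degree is locally constant in the target point on each slice (part of its well-definedness), it suffices to prove the identity for $f'$, which we rename $f$.

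\textbf{Step 2 (structure of the solution set).} With $f$ a regular value of $\mathcal{H}$, the set $M:=\mathcal{H}^{-1}(f)$ is a $C^{2}$ one-dimensional submanifold of $Q$, since $D\mathcal{H}$ is surjective with one-dimensional kernel at every point of $M$. Properness makes $M$ compact, and the hypothesis $f\notin\mathcal{H}(t,\partial Q_t)$ for all $t$, together with a routine transversality perturbation of $Q$ near its lateral boundary at $t\in\{0,1\}$, forces $M$ to meet $\{0,1\}\times W$ only; at such a boundary point $\mathcal{H}_w$ is invertible because $f$ is a regular value of the corresponding slice, so the meeting is transversal. Hence $M$ is a compact $C^{1}$ one-manifold with boundary, $\partial M\subset(\{0\}\times Q_0)\cup(\{1\}\times Q_1)$, and by the classification of compact one-manifolds it is a finite disjoint union of arcs and circles.

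\textbf{Step 3 (sign tracking).} Fix a connected arc $\Gamma\subset M$ and parametrize it by $\tau\mapsto(t(\tau),w(\tau))$. At a point where $\mathcal{H}_w(t,w)$ is invertible, the integer $n(w)$ defining the degree --- the number of positive real eigenvalues of $\mathcal{H}_{1w}(t,w)$ under $\mathcal{H}_{2w}(t,w)\phi=0$ --- is finite by the spectral description of \autoref{lemma33} and is locally constant along $\Gamma$. The key claim is that $n(w(\tau))$ changes parity \emph{precisely} at the turning points of $\Gamma$, i.e.\ the $\tau_*$ with $\dot t(\tau_*)=0$: away from such points the tangent $\ker D\mathcal{H}$ to $M$ has nonzero $t$-component, which forces $\ker \mathcal{H}_w(t,w)=\{0\}$ (otherwise $D_t\mathcal{H}$ would lie in the range of $\mathcal{H}_w$, contradicting surjectivity of $D\mathcal{H}$), so $0$ is not an eigenvalue and $n$ cannot change; at a turning point the regular-value condition forces $\ker\mathcal{H}_w(t(\tau_*),w(\tau_*))$ to be exactly one-dimensional (a two-dimensional kernel cannot be compensated by the single vector $D_t\mathcal{H}$), and the eigenvalue $\sigma(\tau)$ passing through $0$ is simple and crosses transversally, so $n$ jumps by exactly one. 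Since the arc starts with $t$ increasing at each $t=0$ endpoint and decreasing toward each such endpoint, while it is increasing at each $t=1$ endpoint, the sign of $\dot t$ changes an odd number of times on a same-end arc and an even number of times on a cross arc; hence $(-1)^{n}$ at the two endpoints is opposite in the first case and equal in the second.

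\textbf{Step 4 (conclusion and main obstacle).} Summing $(-1)^{n(w)}$ over $\partial M$: each same-end arc contributes $0$ to $\mathrm{deg}(\mathcal{H}(0,\cdot),Q_0,f)$ or to $\mathrm{deg}(\mathcal{H}(1,\cdot),Q_1,f)$, each circle contributes nothing, and each cross arc contributes equal terms to the two degrees, whence $\mathrm{deg}(\mathcal{H}(0,\cdot),Q_0,f)=\mathrm{deg}(\mathcal{H}(1,\cdot),Q_1,f)$. The delicate point is the assertion in Step 3 that, at a turning point, the zero eigenvalue of the mixed boundary-value operator $(\mathcal{H}_{1w},\mathcal{H}_{2w})$ is algebraically simple and crosses with nonzero speed, so that $n$ changes by exactly one rather than by an even amount; this requires combining the index-zero Fredholm property and the spectral-localization estimate of \autoref{lemma33} with a Lyapunov--Schmidt computation that expresses $\dot\sigma(\tau_*)$ in terms of the second-order data of $\mathcal{H}$ along $M$ and uses the regular-value condition to conclude it is nonzero. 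A secondary nuisance, already mentioned, is controlling $M$ near the corners $t\in\{0,1\}$, handled by the standard transversality-to-the-boundary adjustment.
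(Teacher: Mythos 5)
The paper does not actually prove this lemma: its ``proof'' is a one-line deferral to Healey--Simpson \cite{Healey1998}. Your proposal is therefore a reconstruction of the standard argument rather than an alternative to one given in the text, and your overall architecture --- Sard--Smale reduction to a common regular value of the full homotopy, the preimage $M=\mathcal{H}^{-1}(f)$ as a compact one-manifold with boundary contained in the slices $t=0,1$ (using properness from \autoref{lemma31} and the index computation from \autoref{lemma32}), and parity bookkeeping along arcs and circles --- is indeed the classical route underlying the cited reference. Steps 1, 2 and 4 are sound.

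Step 3, however, asserts something false. You claim that at every turning point ($\dot t(\tau_*)=0$) the zero eigenvalue of $\mathcal{H}_w$ is simple and ``crosses transversally, so $n$ jumps by exactly one.'' The regular-value hypothesis on the full map $\mathcal{H}$ does not deliver this. The one-dimensional model $\mathcal{H}(t,w)=w^{3}-t$ has $0$ as a regular value ($D\mathcal{H}=(-1,3w^{2})$ is surjective everywhere), yet along $M=\{t=w^{3}\}$, parametrized by $w=\tau$, the eigenvalue $3\tau^{2}$ of $\mathcal{H}_w$ touches zero at $\tau=0$ without crossing: $n$ does not change there, and neither does $\operatorname{sgn}(\dot t)$. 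So turning points need not be parity-flip points, and the quantity $\dot\sigma(\tau_*)$ you propose to compute by Lyapunov--Schmidt may simply vanish. The invariant your endpoint bookkeeping in Step 4 actually requires is that $(-1)^{n(w(\tau))}$ flips exactly when $\operatorname{sgn}(\dot t(\tau))$ flips, and this must be established differently: either (i) by a further generic perturbation making $t|_{M}$ a Morse function, so that all turning points are quadratic and the crossing is genuinely transversal, or (ii) by the local finite-dimensional Lyapunov--Schmidt reduction near each singular point of $\mathcal{H}_w$, identifying the change of $(-1)^{n}$ across the singular point with the change of sign of the determinant of the reduced bifurcation map, which is tied to $\operatorname{sgn}(\dot t)$ by the implicit function theorem. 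Either repair is available from \autoref{lemma31}--\autoref{lemma33}, but as written your key claim in Step 3 is not merely ``delicate'' --- it is incorrect as stated and must be replaced by the sign-of-$\dot t$ formulation.
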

\begin{proof}
The proof is standard and can be found in various references, for example \cite{Healey1998}.
\end{proof}

Let us define the open ball centered at \(H\) with radius \(r\) as  
\[
B(H; r) = \{ w \in X \mid \| w - H \|_X < r \}.
\]
Based on the definition of the topological degree above, we have the following result:  

\begin{lemma}\label{egien}  
For sufficiently small \(r > 0\), the following holds:
\[
\text{deg}(\mathcal{G}(\alpha, \cdot), B(H, r), 0) = (-1)^{m(\alpha)},
\]  
where \(m(\alpha)\) denotes the number of positive real eigenvalues (counted with multiplicity) of the linearized operator \(\mathcal{F}_{f}(\alpha, 0)\) under the boundary condition \(\mathcal{F}_{2f}(\alpha, 0)h = 0\) on \(T\).  
\end{lemma}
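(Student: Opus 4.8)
The plan is to invoke the defining formula of the Healey--Simpson degree at a regular value, which collapses the sum over preimages to the single point $H$; the real content is showing that $H$ is a nondegenerate, isolated zero of $\mathcal{G}(\alpha,\cdot)$ on a sufficiently small ball, so that the evaluation of the degree is then immediate.

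First I would record the structural facts already available. By Theorem~\ref{dingli08201}(1), $\mathcal{G}(\alpha,H)=0$ for every $\alpha>\alpha_s$, and since $\mathcal{F}(\alpha,f)=\mathcal{G}(\alpha,H+f)$ the Fréchet derivatives are identified as $\mathcal{G}_h(\alpha,H)=\mathcal{F}_f(\alpha,0)$, with components $\mathcal{G}_{1h}(\alpha,H)=\mathcal{F}_{1f}(\alpha,0)$ and $\mathcal{G}_{2h}(\alpha,H)=\mathcal{F}_{2f}(\alpha,0)$. By Lemma~\ref{lemma32}, $\mathcal{G}_h(\alpha,H)$ is Fredholm of index zero; hence it is an isomorphism from $X$ onto $Y$ exactly when its null space is trivial. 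This fails only at $\alpha=\alpha_c$ and at the other values $\alpha_k$ characterized by \eqref{new-equation-per-ode-3} (where $0$ becomes an eigenvalue of the modified problem); away from this discrete set, which is the situation relevant to the global bifurcation argument, $\mathcal{G}_h(\alpha,H)$ is invertible, and this is the case I would treat.

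Since $\mathcal{G}(\alpha,\cdot)\in C^2$ and its derivative at $H$ is an isomorphism, the inverse function theorem in Banach spaces shows $\mathcal{G}(\alpha,\cdot)$ is a local $C^2$-diffeomorphism near $H$; in particular there is $r_0>0$ such that $H$ is the unique solution of $\mathcal{G}(\alpha,h)=0$ in $\overline{B(H,r_0)}$ and $H$ is a regular point. For any $r\in(0,r_0]$ we then have $0\notin\mathcal{G}(\alpha,\partial B(H,r))$ and $0$ is a regular value of $\mathcal{G}(\alpha,\cdot)$ restricted to $B(H,r)$, so the degree is computed directly from its definition,
\[
\text{deg}\big(\mathcal{G}(\alpha,\cdot),B(H,r),0\big)=\sum_{w\in\mathcal{G}^{-1}(0)\cap B(H,r)}(-1)^{n(w)}=(-1)^{n(H)}.
\]
By the definition of $n(w)$, $n(H)$ is the number of positive real eigenvalues (counted with multiplicity) of $\mathcal{G}_{1h}(\alpha,H)=\mathcal{F}_{1f}(\alpha,0)$ subject to the boundary condition $\mathcal{G}_{2h}(\alpha,H)h=\mathcal{F}_{2f}(\alpha,0)h=0$ on $T$, which is precisely $m(\alpha)$; finiteness of $m(\alpha)$ is guaranteed by Lemma~\ref{lemma33}(2). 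This yields $\text{deg}(\mathcal{G}(\alpha,\cdot),B(H,r),0)=(-1)^{m(\alpha)}$.

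The only delicate point is the well-definedness of the degree rather than its evaluation: one needs $m(\alpha)<\infty$ together with the isolatedness of $H$, both of which are already secured by Lemmas~\ref{lemma31}--\ref{lemma33}. At the exceptional parameters $\alpha=\alpha_c$ or $\alpha=\alpha_k$, where $0$ is an eigenvalue and $H$ may fail to be isolated, the identity must be interpreted via a Sard--Smale perturbation to a nearby regular value; but since the global bifurcation argument only applies the lemma for $\alpha$ lying on either side of $\alpha_c$, the clean argument above suffices for the purposes of this paper.
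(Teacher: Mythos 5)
Your argument is correct and is precisely the standard one the paper has in mind: the paper states \autoref{egien} without proof as an immediate consequence of the definition of the Healey--Simpson degree, and your proposal simply fills in the details (invertibility of $\mathcal{G}_h(\alpha,H)=\mathcal{F}_f(\alpha,0)$ away from the exceptional parameters, isolation of $H$ via the inverse function theorem, and the identification $n(H)=m(\alpha)$). Your closing remark correctly flags that the formula is only needed, and only cleanly valid, for $\alpha$ at which $0$ is not an eigenvalue, which is exactly how the lemma is used in the proof of \autoref{large-waves}.
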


\subsection{Proof of \autoref{large-waves}}

\begin{proof}
Assume, for contradiction, that \(\mathcal{C}_{\delta}\) is bounded in \((\alpha_s, \infty) \times X\), does not intersect \(\{(\alpha, H)\}\) except at \(\alpha = \alpha_c\), and does not meet \(\partial \mathcal{O}_{\delta}\).  

Let \(\mathcal{Q}\) be a closed set in \((\alpha_s, \infty) \times X\) containing \(\mathcal{C}_{\delta}\). By \autoref{lemma31}, \(\mathcal{G}^{-1}(0) \cap \mathcal{Q}\) is compact, and since \(\mathcal{C}_{\delta}\) is a closed subset of this set, it follows that \(\mathcal{C}_{\delta}\) is compact in \((\alpha_s, \infty) \times X\).  

Note that \(\mathcal{G}(\alpha, H) = 0\) for all \(\alpha \geq \alpha_s\), and there exist \(\alpha_1, \alpha_2\) such that \(\alpha_1 < \alpha_c < \alpha_2\) and \((\alpha, H)\) is not a bifurcation point for \(\alpha \neq \alpha_c\). The isolation of the trivial solution implies the existence of a positive function \(r(\alpha)\) defined on \([\alpha_1, \alpha_2]\) and a constant \(r_0 > 0\) such that:  
\begin{align}
\begin{aligned}
&r(\alpha) \to 0 \quad \text{as} \quad \alpha \to \alpha_c, \quad r(\alpha) \leq r_0, \\
&\{(\alpha, h) \in [\alpha_s, +\infty) \times X \mid \| h - H \|_X < r(\alpha)\} \cap \mathcal{C}_{\text{loc}} = \emptyset,
\end{aligned}
\end{align}  
when \(\alpha_1\) and \(\alpha_2\) are sufficiently close to \(\alpha_c\).  

Define the distorted rectangle as follows:  
\[
\mathcal{R} = \bigcup_{\alpha \in (\alpha_1, \alpha_2)} \left\{ (\alpha, h) \mid \| h - H \|_X < r_0 \right\}.
\]
We claim that there exists an open and bounded set \(\mathcal{W}\) such that:
\begin{description}
\item[(1)] \(\mathcal{C}_{\delta} \subset \mathcal{W} \subset \mathcal{O}_{\delta}\);
\item[(2)] \(\mathcal{G} \neq 0\) on \(\partial \mathcal{W} \setminus \{(\alpha, H) \mid \alpha > \alpha_s\}\);
\item[(3)] \(\mathcal{W} \cap \{(\alpha, H) \mid \alpha > \alpha_s\} = \mathcal{R} \cap \{(\alpha, H) \mid \alpha > \alpha_s\} = \{(\alpha, H) \mid \alpha \in (\alpha_1, \alpha_2)\}\).
\end{description}
We choose a sufficiently small \(\epsilon > 0\) such that \(\mathcal{W}_1\) is an open \(\epsilon\)-neighborhood of \(\mathcal{C}_{\delta} \setminus \mathcal{R}\), and define
\[
\mathcal{W} = \mathcal{R} \cup \mathcal{W}_1.
\]
Properties (1) and (2) imply that \(\mathcal{C}_{\delta}\) does not intersect \(\partial \mathcal{W}\).  

Let us now define
\[
\mathcal{W}_{\alpha} = \{ h \in X \mid (\alpha, h) \in \mathcal{W} \}.
\]
and consider the degree of \(\mathcal{G}(\alpha, \cdot)\) over three distinct regimes:
\begin{description}
\item[(i)] By homotopy invariance and the fact that \(\mathcal{G} \neq 0\) on \(\partial \mathcal{W} \setminus \{(\alpha, H) \mid \alpha > \alpha_s\}\), the degree \(\text{deg}(\mathcal{G}(\alpha, \cdot), \mathcal{W}_{\alpha}, 0)\) is independent of \(\alpha\) for \(\alpha \in (\alpha_1, \alpha_2)\).

\item[(ii)] From \autoref{egien}, we have
\[
\text{deg}(\mathcal{G}(\alpha, \cdot), B(H, r(\alpha)), 0) = (-1)^{m(\alpha)},
\]
where \(m(\alpha)\) is the number of positive real eigenvalues (counted with multiplicity) of the linear operator \(\mathcal{G}_{1w}(\alpha, w)|_{w = H}\) subject to the boundary condition \(\mathcal{G}_{2w}(\alpha, w)|_{w = H} h = 0\) on \(T\). Since a simple eigenvalue crosses zero at \(\alpha = \alpha_c\), the value \(m(\alpha)\) changes by 1 as \(\alpha\) passes through \(\alpha_c\). Hence, \(\text{deg}(\mathcal{G}(\alpha, \cdot), B(H, r(\alpha)), 0)\) changes sign at \(\alpha = \alpha_c\).

\item[(iii)] Properties (2) and (3) imply that \(\mathcal{G}(\alpha, h) \neq 0\) for \(\alpha \neq \alpha_c\) and \(h \in \partial(\mathcal{W}_{\alpha} \setminus B(H, r(\alpha)))\). By homotopy invariance, \(\text{deg}(\mathcal{G}(\alpha, \cdot), \mathcal{W}_{\alpha} \setminus B(H, r(\alpha)), 0)\) is constant for \(\alpha < \alpha_c\) and \(\alpha > \alpha_c\). Moreover, there exists \(\alpha \in (\alpha_1, \alpha_2)\) such that \(\mathcal{W}_{\alpha} \setminus B(H, r(\alpha)) = \emptyset\), which implies
\[
\text{deg}(\mathcal{G}(\alpha, \cdot), \mathcal{W}_{\alpha} \setminus B(H, r(\alpha)), 0) = 0.
\]
\end{description}

By the additivity property of degree,
\[
\text{deg}(\mathcal{G}(\alpha, \cdot), \mathcal{W}_{\alpha} \setminus B(H, r(\alpha)), 0) + \text{deg}(\mathcal{G}(\alpha, \cdot), B(H, r(\alpha)), 0) = \text{deg}(\mathcal{G}(\alpha, \cdot), \mathcal{W}_{\alpha}, 0).
\]
From (iii), it follows that
\[
\text{deg}(\mathcal{G}(\alpha, \cdot), B(H, r(\alpha)), 0) = \text{deg}(\mathcal{G}(\alpha, \cdot), \mathcal{W}_{\alpha}, 0).
\]
The left-hand side changes sign as \(\alpha\) crosses \(\alpha_c\), while the right-hand side is independent of \(\alpha\). This contradiction completes the proof.
\end{proof}

\section{Numerical investigation}\label{shuzhi0805}
We numerically investigate the bifurcation types. At $\alpha=\alpha_c$, it gets from \eqref{biaodaishi0821} and \eqref{critical-alpha} that
\[
\begin{cases}
\alpha_c-\frac{2p_0^2}{\gamma^2 e^{\gamma} (e^{2\gamma} - 1)} = e^{\gamma} ,\\
2\gamma^2 e^{3\gamma}\alpha_c+p_0^2 = 
\gamma^2 e^{2\gamma}\left( 2\lambda +e^{2\gamma}\right) .
\end{cases}
\]
which means that $\alpha_c$ and $p_0$ are completely determined once $\gamma$ and $\lambda$ are given. Hence, we adopt \(\gamma\) and $\lambda$
as the primary parameters to analyze the bifurcation types.

We present a parameter-region plot in the \((\gamma, \lambda)\)-plane to illustrate the bifurcation type, as shown in \autoref{Critical0805}. Regions where the non-dimensional parameter \(\mathcal{O} > 0\) (defined in \eqref{fuhao0715}) correspond to supercritical bifurcation, while regions with \(\mathcal{O} < 0\) correspond to subcritical bifurcation.  
We select specific \((\gamma, \lambda)\) pairs to provide concrete examples and validate the theoretical predictions.

Subsequently, we provide two illustrative examples: one with \(\mathcal{O} > 0\) and the other with \(\mathcal{O} < 0\).

\begin{figure}[h]
  \centering
  \includegraphics[width=.6\textwidth,height=.5\textwidth]{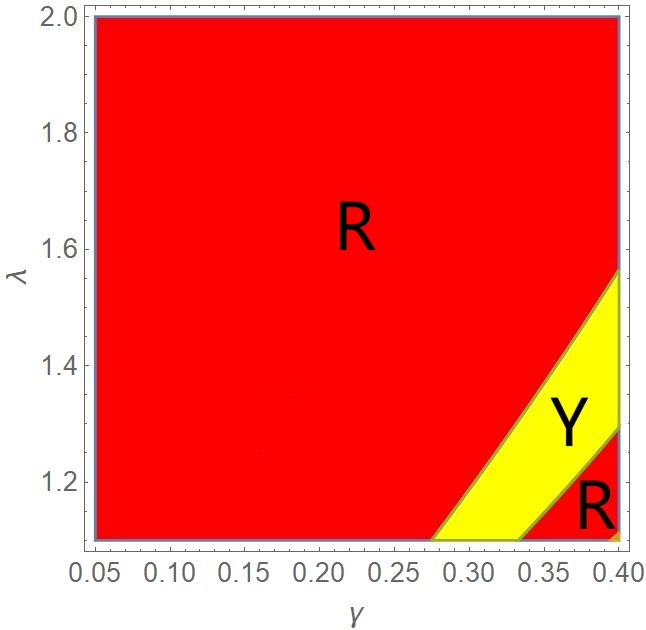}\\
  \caption{
  Red region (supercritical bifurcation) and yellow region (subcritical bifurcation).
  }\label{Critical0805}
\end{figure}

\begin{example}
Choose \((\gamma, \lambda) = (0.2, 1.4)\) from the red region in \autoref{Critical0805}. We obtain:
\[
\mathcal{O} = 0.218807 > 0, \quad p_0^2 = 0.00594402, \quad \gamma^2 e^{4\gamma} = 0.0890216 > p_0^2, \quad \alpha_c = 1.71615. 
\]
The system \eqref{new-equation} undergoes a supercritical bifurcation at the bifurcation point \((\alpha_c, H(p))\). Specifically, for \(\alpha\) in a neighborhood of \(\alpha_c\) with \(\alpha > \alpha_c\), a bifurcated solution to \eqref{new-equation} emerges.
    \begin{figure}[h]
  \centering
   \begin{tikzpicture}
    \draw[->] (-2,0) -- (4,0) node[right] {$\alpha$};
      \draw[dashed] (-2,0.5) -- (4,0.5) node[right] {$h=H$};
    \draw[->] (0,-2) -- (0,2) node[above] {$X$};
   \draw[dashed] (2,-1.8) -- (2,1.8);
     \node at (2,-1.5) [below right] {$\alpha=\alpha_c$};
    \node at (0,0) [below left] {$O$};
    \fill[color=blue] (2,0.5) circle (2pt);
    \draw[blue, thick] (2,0.5) .. controls (2.3,1.5) .. (3.5, 1.8);
    \draw[blue, thick] (2,0.5) .. controls (2.3,-0.5) .. (3.5,-0.8) ;
\end{tikzpicture}
  \caption{
 Supercritical bifurcation at $(\alpha_c,H)$ with $H=e^{0.2(p + 1)} - 1$.
  }
\end{figure}
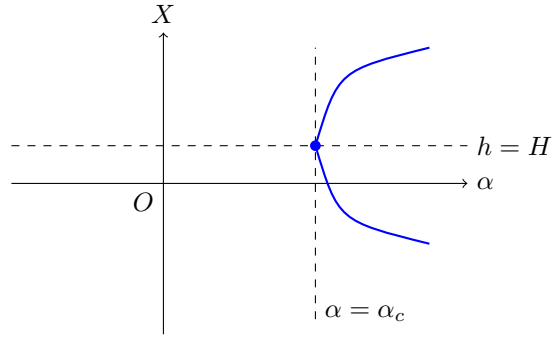
\end{example}

\begin{example}
Choose \((\gamma, \lambda) = (0.3, 1.15)\) from the yellow region in Figure \ref{Critical0805}. We obtain:
\[
\mathcal{O} = -0.150203 < 0, \quad p_0^2 = 0.00794367, \quad \gamma^2 e^{4\gamma} = 0.298811 > p_0^2, \quad \alpha_c = 1.50893.
\]
The system \eqref{new-equation} undergoes a subcritical bifurcation at the bifurcation point \((\alpha_c, H(p))\). Specifically, for \(\alpha\) in a neighborhood of \(\alpha_c\) with \(\alpha < \alpha_c\), a bifurcated solution to \eqref{new-equation} emerges.
      \begin{center}
          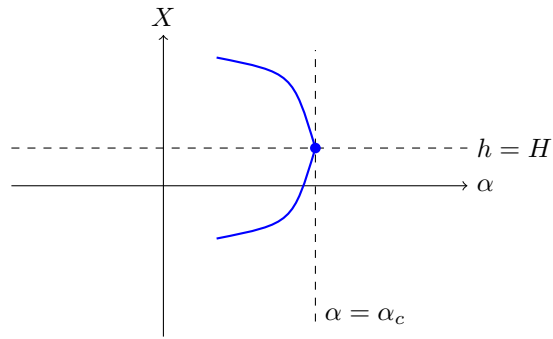
\begin{figure}[h]
  \centering
   \begin{tikzpicture}
    \draw[->] (-2,0) -- (4,0) node[right] {$\alpha$};
    \draw[->] (0,-2) -- (0,2) node[above] {$X$};
        \draw[dashed] (-2,0.5) -- (4,0.5) node[right] {$h=H$};
\draw[dashed] (2,-1.8) -- (2,1.8);
     \node at (2,-1.5) [below right] {$\alpha=\alpha_c$};
    \fill[color=blue] (2,0.5) circle (2pt);
   
\draw[blue, thick] (2,0.5) .. controls (1.7,1.5) .. (0.7,1.7);
\draw[blue, thick] (2,0.5) .. controls (1.7,-0.5) .. (0.7,-0.7);
\end{tikzpicture}
  \caption{
 Subcritical bifurcation at $(\alpha_c,H)$ with $H=e^{0.2(p + 1)} - 1$.  }
\end{figure}
\end{center}
\end{example}

 \section*{Acknowledgement}
L. Li was supported by the Young Scientists Fund of the 
National Natural Science Foundation of China (No. 12301131, 11901408).
Q. Wang was supported by the Natural Science Foundation of Sichuan Province (Grant No.2025ZNSFSC0072). 
\itemsep=0pt


\begin{thebibliography}{10}

\bibitem{Taklo2015}
Tore Magnus~A. Taklo, Karsten Trulsen, Odin Gramstad, Harald~E. Krogstad, and
  Atle Jensen.
\newblock Measurement of the dispersion relation for random surface gravity
  waves.
\newblock {\em Journal of Fluid Mechanics}, 766:326--336, February 2015.

\bibitem{Armaroli2018}
Andrea Armaroli, Debbie Eeltink, Maura Brunetti, and Jérôme Kasparian.
\newblock Viscous damping of gravity-capillary waves: Dispersion relations and
  nonlinear corrections.
\newblock {\em Physical Review Fluids}, 3(12):124803, December 2018.

\bibitem{Stokes1847}
Stokes. G.
\newblock On the theory of oscillatory waves.
\newblock {\em Trans. Cambridge Phil. Soc.}, 8:441--455, 1847.

\bibitem{Nekrasov1921}
A.I. Nekrasov.
\newblock {\em On steady waves}.
\newblock Izv. Ivanovo-Voznesenk. Politekhn, 1921.

\bibitem{Levi-Civita1924}
T~Levi-Civita.
\newblock Determinazione rigorosa delle onde irrotazionali periodiche in acqua
  profonda.
\newblock {\em Rend. Accad. Lincei.}, 33:141--150, 1924.

\bibitem{Struik1926}
D~Struik.
\newblock D\'{e}termination rigoureuse des ondes irrotationelles périodiques
  dans un canal a profondeur finie.
\newblock {\em Math. Ann.}, 95:595--634, 1926.

\bibitem{Krasovskii1961}
Yu.~P. Krasovskii.
\newblock On the theory of steady waves of finite amplitude.
\newblock {\em U.S.S.R. Comput. Math. Math. Phys.}, 1:996–1018, 1961.

\bibitem{Keady1978}
G.~Keady and J.~Norbury.
\newblock On the existence theory for irrotational water waves.
\newblock {\em Math. Proc. Cambridge Philos. Soc.}, 1:137--157, 1978.

\bibitem{Amick1982}
L.E.~Fraenkel C.J.~Amick and J.~F. Toland.
\newblock On the stokes conjecture for the wave of extreme form.
\newblock {\em Acta Math.}, 148:193–214, 1982.

\bibitem{Kogelbauer2025}
F~Kogelbauer.
\newblock Variational instability for irrotational water waves in finite depth.
\newblock {\em arXiv: 2502. 16698v1}.

\bibitem{Krasovskil1961}
J.P Krasovskil.
\newblock On the theory of steady-state waves of finite amplitude.
\newblock {\em Z. Vycisl. Mat. i Mat. Fiz.}, 1:836--855, 1961.

\bibitem{Toland1978}
J.~F Toland.
\newblock On the existence of a wave of greatest height and stokes’s
  conjecture.
\newblock {\em Proc. Roy. Soc. London Ser. A}, 363:469--485, 1978.

\bibitem{McLeod1997}
J.~B McLeod.
\newblock The stokes and krasovskii conjectures for the wave of greatest
  height.
\newblock {\em Stud. Appl. Math.}, 98:311--333, 1997.

\bibitem{Constantin2009}
Constantin. A and Strauss. W.
\newblock Pressure beneath a stokes wave.
\newblock {\em Commun. Pure Appl. Math.}, 72:533--557, 2010.

\bibitem{Brizuela2023}
N.~G. et.~al. Brizuela.
\newblock A vorticity-divergence view of internal wave generation by a
  fast-moving tropical cyclone: Insights from super typhoon mangkhut.
\newblock {\em J GEOPHYS RES-OCEANS}, 128, 2023.

\bibitem{Whitwell2024}
C.~A. et.~al. Whitwell.
\newblock Ocean mixing in a shelf sea driven by energetic internal waves.
\newblock {\em J GEOPHYS RES-OCEANS}, 129:23, 2024.

\bibitem{Constantin2004}
Adrian Constantin and Walter Strauss.
\newblock Exact steady periodic water waves with vorticity.
\newblock {\em Commun. Pure Appl. Math.}, 57(4), 2004.

\bibitem{Constantin2011}
Adrian Constantin and Walter Strauss.
\newblock Periodic traveling gravity water waves with discontinuous vorticity.
\newblock {\em Arch. Ration. Mech. Anal.}, 202(1):133, 2011.

\bibitem{Constantin2011a}
Adrian Constantin and Joachim Escher.
\newblock Analyticity of periodic traveling free surface water waves with
  vorticity.
\newblock {\em Ann. of Math.}, 173(1):559--568, 2011.

\bibitem{Fei2024}
Fei Xu, Fengquan Li, and Yong Zhang.
\newblock The symmetry of steady stratified periodic gravity water waves.
\newblock {\em Monatsh. Math.}, (1):203, 2024.

\bibitem{Walsh20141}
Samuel Walsh.
\newblock Steady stratified periodic gravity waves with surface tension i:
  Local bifurcation.
\newblock {\em Discrete Contin. Dyn. Syst.}, 34(8):3241--3285, 2014.

\bibitem{Constantin2016}
Adrian Constantin, Walter Strauss, and Eugen Varvaruca.
\newblock Global bifurcation of steady gravity water waves with critical
  layers.
\newblock {\em Acta Math.}, 217(2):195--262, 2016.

\bibitem{Walsh20172}
Samuel Walsh.
\newblock Steady stratified periodic gravity waves with surface tension ii:
  Global bifurcation.
\newblock {\em Discrete Contin. Dyn. Syst.}, 34(8):3287--3315, 2017.

\bibitem{Constantin2019}
A.~Constantin and R.~I. Ivanov.
\newblock Equatorial wave–current interactions.
\newblock {\em Communications in Mathematical Physics}, 370(1):1--48, June
  2019.

\bibitem{Kristoffer2020}
Kristoffer Varholm.
\newblock Global bifurcation of waves with multiple critical layers.
\newblock {\em SIAM J. Math. Anal.}, (52):5066--5089, 2020.

\bibitem{Constantin2021}
Adrian Constantin, Walter Strauss, and Eugen Varvaruca.
\newblock Large-amplitude steady downstream water waves.
\newblock {\em Commun. Math. Phys.}, (387):237--266, 2021.

\bibitem{Vladimir2023}
Kozlov Vladimir and Lokharu Evgeniy.
\newblock Global bifurcation and highest waves on water of finite depth.
\newblock {\em Arch. Ration. Mech. Anal.}, 247(5), 2023.

\bibitem{Lavrent1952}
M.~A. Lavrentcprime~ev.
\newblock I. {O}n the theory of long waves. {II}. {A} contribution to the
  theory of long waves.
\newblock {\em Amer. Math. Soc. Translation}, 1954(102):53, 1954.

\bibitem{Friedrichs1954}
K.~O. Friedrichs and D.~H. Hyers.
\newblock The existence of solitary waves.
\newblock {\em Comm. Pure Appl. Math.}, 7:517--550, 1954.

\bibitem{Beale1977}
J.~Thomas Beale.
\newblock The existence of solitary water waves.
\newblock {\em Comm. Pure Appl. Math.}, 30(4):373--389, 1977.

\bibitem{Mielke1988}
Alexander Mielke.
\newblock Reduction of quasilinear elliptic equations in cylindrical domains
  with applications.
\newblock {\em Math. Methods Appl. Sci.}, 10(1):51--66, 1988.

\bibitem{Pego2016}
Robert~L. Pego and Shu-Ming Sun.
\newblock Asymptotic linear stability of solitary water waves.
\newblock {\em Arch. Ration. Mech. Anal.}, 222(3):1161--1216, 2016.

\bibitem{Amick1981}
C.~J. Amick and J.~F. Toland.
\newblock On periodic water-waves and their convergence to solitary waves in
  the long-wave limit.
\newblock {\em Philos. Trans. Roy. Soc. London Ser. A}, 303(1481):633--669,
  1981.

\bibitem{Amick19811}
C.~J. Amick and J.~F. Toland.
\newblock On solitary water-waves of finite amplitude.
\newblock {\em Arch. Rational Mech. Anal.}, 76(1):9--95, 1981.

\bibitem{Wheeler2013}
Miles~H. Wheeler.
\newblock Large-amplitude solitary water waves with vorticity.
\newblock {\em SIAM J. Math. Anal.}, 45(5):2937--2994, 2013.

\bibitem{Wheeler2015}
Miles~H. Wheeler.
\newblock Solitary water waves of large amplitude generated by surface
  pressure.
\newblock {\em Arch. Ration. Mech. Anal.}, 218(2):1131--1187, 2015.

\bibitem{chen2016}
Robin~Ming Chen, Samuel Walsh, and Miles~H. Wheeler.
\newblock On the existence and qualitative theory of stratified solitary water
  waves.
\newblock {\em C. R. Math. Acad. Sci. Paris}, 354(6):601--605, 2016.

\bibitem{chen2018}
Robin~Ming Chen, Samuel Walsh, and Miles~H. Wheeler.
\newblock Existence and qualitative theory for stratified solitary water waves.
\newblock {\em Ann. Inst. H. Poincar\'e{} C Anal. Non Lin\'eaire},
  35(2):517--576, 2018.

\bibitem{Haziot2023}
Susanna~V. Haziot and Miles~H. Wheeler.
\newblock Large-amplitude steady solitary water waves with constant vorticity.
\newblock {\em Archive for Rational Mechanics and Analysis}, 247(2), March
  2023.

\bibitem{Matthies2024}
Karsten Matthies, Jonathan Sewell, and Miles~H. Wheeler.
\newblock Solitary solutions to the steady euler equations with piecewise
  constant vorticity in a channel.
\newblock {\em Journal of Differential Equations}, 400:376--422, August 2024.

\bibitem{Dias2003}
Frèdèric Dias and Gèrard Iooss.
\newblock {\em Water-Waves as a Spatial Dynamical System}, pages 443--499.
\newblock Elsevier, 2003.

\bibitem{Groves2004}
M.~D. Groves.
\newblock Steady water waves.
\newblock {\em J. Nonlinear Math. Phys.}, 11(4):435--460, 2004.

\bibitem{Haziot2022}
Susanna~V. Haziot, Vera~Mikyoung Hur, Walter~A. Strauss, J.~F. Toland, Erik
  Wahl\'{e}n, Samuel Walsh, and Miles~H. Wheeler.
\newblock Traveling water waves---the ebb and flow of two centuries.
\newblock {\em Quart. Appl. Math.}, 80(2):317--401, 2022.

\bibitem{Goldreich1979}
Peter Goldreich and Scott Tremaine.
\newblock The excitation of density waves at the {L}indblad and corotation
  resonances by an external potential.
\newblock {\em Astrophys. J.}, 233(3):857--871, 1979.

\bibitem{Wang2025}
Minyang Wang, Samuel S.~P. Shen, Shang-Ping Xie, Xudong Wang, and Yan Du.
\newblock An sst-based reconstruction of the pacific tropical instability
  wave–associated velocities and ssh since 1981.
\newblock {\em Journal of Physical Oceanography}, 55(5):503--523, May 2025.

\bibitem{Kielhoefer2012}
Hansjörg Kielhöfer.
\newblock {\em Bifurcation Theory: An Introduction with Applications to Partial
  Differential Equations}.
\newblock SpringerLink. Springer New York, New York, NY, 2012.

\bibitem{Gilbarg2001}
David Gilbarg and Neil~S. Trudinger.
\newblock {\em Elliptic partial differential equations of second order}.
\newblock Classics in Mathematics. Springer-Verlag, Berlin, 2001.
\newblock Reprint of the 1998 edition.

\bibitem{Agmon1964}
S.~Agmon, A.~Douglis, and L.~Nirenberg.
\newblock Estimates near the boundary for solutions of elliptic partial
  differential equations satisfying general boundary conditions. {II}.
\newblock {\em Comm. Pure Appl. Math.}, 17:35--92, 1964.

\bibitem{Agmon1962}
Shmuel Agmon.
\newblock On the eigenfunctions and on the eigenvalues of general elliptic
  boundary value problems.
\newblock {\em Comm. Pure Appl. Math.}, 15:119--147, 1962.

\bibitem{Healey1998}
Timothy~J. Healey and Henry~C. Simpson.
\newblock Global continuation in nonlinear elasticity.
\newblock {\em Arch. Rational Mech. Anal.}, 143(1):1--28, 1998.

\end{thebibliography}

\end{document}